\theoremstyle{plain}
\newtheorem{thm}{Theorem}[section]
\theoremstyle{definition}
\newtheorem{dfn}[thm]{Definition}
\newtheorem{rem}[thm]{Remark}
\theoremstyle{plain}
\newtheorem{lem}[thm]{Lemma}
\newtheorem{prop}[thm]{Proposition}
\newtheorem{cor}[thm]{Corollary}
\newtheorem{clm}{Claim}
\theoremstyle{definition}
\newtheorem{case}{Case}
\numberwithin{equation}{section}
\title{grid homology for spatial graphs and a K\"{u}nneth formula of connected sums}
\author{Hajime Kubota}
\date{}
\subjclass{57K18}
\keywords{grid homology; knot Floer homology; K\"{u}nneth formula; spatial graph}
\begin{document}
\begin{abstract}
We define the hat and tilde versions of the grid homology for spatial graphs possibly with sinks, sources, or cut edges by extending the grid homology developed by Harvey, O'Donnol \cite{Heegaard_Floer_homology_of_spatial_graphs}.
We define a cut edge for spatial graphs and show that the grid homology for a spatial graph $f$ is trivial if $f$ has a sink, source, or cut edge.
As an application, we give purely combinatorial proofs of some formulas including a K\"{u}nneth formula for the knot Floer homology of connected sums in the framework of the grid homology.
\end{abstract}
\maketitle

\section{Introduction}
Knot Floer homology is a powerful invariant of knots developed by Ozsv\'{a}th and Szab\'{o} \cite{Holomorphic-disks-and-knot-invariants} and Rasmussen \cite{1st-KFH-ras} independently.
It is a categorification of the Alexander polynomial since the graded Euler characteristic coincides with the Alexander polynomial.

Grid homology is a combinatorial reconstruction of knot Floer homology developed by Manolescu, Ozsv\'{a}th, Szab\'{o}, and Thurston \cite{oncombinatorial}.
Grid homology enables us to calculate knot Floer homology without the holomorphic theory.
So it is interesting to give a purely combinatorial proof of known results in knot Floer homology using grid homology.
For example, Sarkar \cite{grid-tau} defined the combinatorial Ozsv\'{a}th-Szab\'{o} Tau-invariant, which is defined in \cite{Knot-Floer-homology-and-the-four-ball-genus}, and gave a purely combinatorial proof that the Ozsv\'{a}th-Szab\'{o} Tau-invariant is a concordance invariant.
Similarly, F\"{o}ldv\'{a}ri \cite{grid-upsilon} reconstructed the combinatorial Upsilon invariant using grid homology. 
Then the author \cite{grid-upsilon-concordance} proved that it is a concordance invariant.

In 2017, Harvey and O'Donnol \cite{Heegaard_Floer_homology_of_spatial_graphs} extended grid homology to a certain class of oriented spatial graphs called transverse spatial graphs.
For a sinkless and sourceless transverse spatial graph $f$, they defined a sutured manifold $(E(f),\gamma(f))$ determined by $f$ and showed that the hat version of the grid homology of $f$ is isomorphic to sutured Floer homology of $(E(f),\gamma(f))$ \cite[Theorem 6.6]{Heegaard_Floer_homology_of_spatial_graphs}.
As a corollary, the graded Euler characteristic of their hat version coincides with the torsion invariant $T(E(f),\gamma(f))\in\mathbb{Z}[H_1(E(f))]$ of Friedl, Juh\'{a}sz, and Rasmussen \cite{decategorification-of-sutured-Floer}.
Bao \cite{HF-bipartite} defined Floer homology for embedded bipartite graphs.
Harvey and O'Donnol showed that Bao's Floer homology is essentially the same as their grid homology.

For sinkless and sourceless transverse spatial graphs, Harvey and O'Donnol first defined the minus version and then the hat version using it.
Both sinkless and sourceless conditions are necessary to define their minus version but not for the hat and tilde versions.

In this paper, based on their work, we quickly define the hat and tilde versions for general transverse spatial graphs.
We define a cut edge for spatial graphs (see Definition \ref{dfn:cutedge}).
We show that the grid homology for $f$ is trivial if $f$ has a sink, source, or cut edge (Theorem \ref{thm:cutedge}).
As applications of Theorem \ref{thm:cutedge}, we give some formulas (Corollary \ref{cor:wedge sum}-Theorem \ref{thm:disjoint}), including a K\"{u}nneth formula for knot Floer homology of connected sums (Corollary \ref{cor:connected-sum-knot}).

The behavior of knot Floer homology under connected sums is well-known however the connected sum operations had not been dealt with in grid homology.
This operation using grid diagrams is not written in the grid homology book \cite{grid-book}.
The difficulty of dealing with this operation can be seen from the paper of V\'{e}rtesi \cite{Transversely-nonsimple-knots}; to prove the additivity of the Legendrian and transverse invariants under connected sums, she used the identification with grid homology and the knot Floer homology due to a lack of connected sum formula in grid homology.
In particular, her proof is not combinatorial, even though these invariants are defined in grid homology.
Furthermore, the number of generators for the grid chain complexes $\widehat{GC}(K_1)\otimes\widehat{GC}(K_2)$ and $\widehat{GC}(K_1\#K_2)$ are $n_1!\times n_2!$ and $(n_1+n_2)!$ respectively and there is a natural injection from the generators of $\widehat{GC}(K_1)\otimes\widehat{GC}(K_2)$ to those of $\widehat{GC}(K_1\#K_2)$.
This suggests that most generators of $\widehat{GC}(K_1\#K_2)$ should vanish in their homology, which is not obvious from the definition.

The grid homology for spatial graphs was defined in 2007 but has few applications.
This paper gives a new application; trivial homology of spatial graphs with cut edges quickly deduces the connected sum formula.
It is more reasonable to use the grid homology for spatial graphs to show the connected sum formula than to consider the knot grid homology because spatial graphs with cut edges are represented by nice grid diagrams and we can relatively easily check that their grid homologies are trivial.

\subsection{MOY graphs}
An oriented spatial graph $f$ is the image of an embedding of a directed graph in $S^3$.
Intuitively, a \textbf{transverse spatial graph} is an oriented spatial graph such that for each vertex, there is a small disk that separates the incoming edges and the outgoing edges.
See \cite[Definition 2.2]{Heegaard_Floer_homology_of_spatial_graphs} for the definition of transverse spatial graphs.

Let $E(f)$ denote the set of edges of $f$ and $V(f)$ the set of vertices of $f$.
For $v\in V(f)$, let $\mathrm{In}(v)$ be the set of edges incoming to $v$ and $\mathrm{Out}(v)$ be the set of edges outgoing to $v$.
\begin{dfn}
\label{dfn:balanced}
\begin{enumerate}
    \item A \textbf{balanced coloring} $\omega$ for $f$ is a map $E(f)\to\mathbb{Z}$ satisfying $\sum_{e\in \mathrm{In}(v)}\omega(e)=\sum_{e\in \mathrm{Out}(v)}\omega(e)$ for each $v\in V(f)$.
    \item An \textbf{MOY graph} $(f,\omega)$ is a pair of a transverse spatial graph $f$ and a balanced coloring of $f$.
\end{enumerate}
\end{dfn}

We call a vertex $v$ \textbf{sink} if $v$ has only incoming edges.
Analogously, we call a vertex $v$ \textbf{source} if $v$ has only outgoing edges.

\begin{dfn}
\label{dfn:cutedge}
Let $f$ be a spatial graph.
An edge $e\in E(f)$ is a \textbf{cut edge} if there exists an embedded 2-sphere $\Sigma\subset S^3$ such that $\Sigma\cap(f(G)-\mathrm{Int}(e))=\emptyset$ and $e$ meets $\Sigma$ transversely in a single point.
\end{dfn}
\begin{rem}
\begin{itemize}
    \item The 2-sphere $\Sigma$ in the above definition is a \textit{cutting sphere} introduced by Taniyama \cite{cutting-sphere}.
    \item Cut edges for abstract graphs differ from those for spatial graphs.
    An edge $e$ of an abstract graph is called a cut edge if $G-e$ has one more connected component than $G$.
    \item Some spatial graph $f\colon G\to S^3$ has no cut edge even if $G$ has cut edges as an abstract graph. (Figure \ref{fig:3handcuff}).
\end{itemize}
\end{rem}

\begin{figure}
\centering
\includegraphics[scale=0.24]{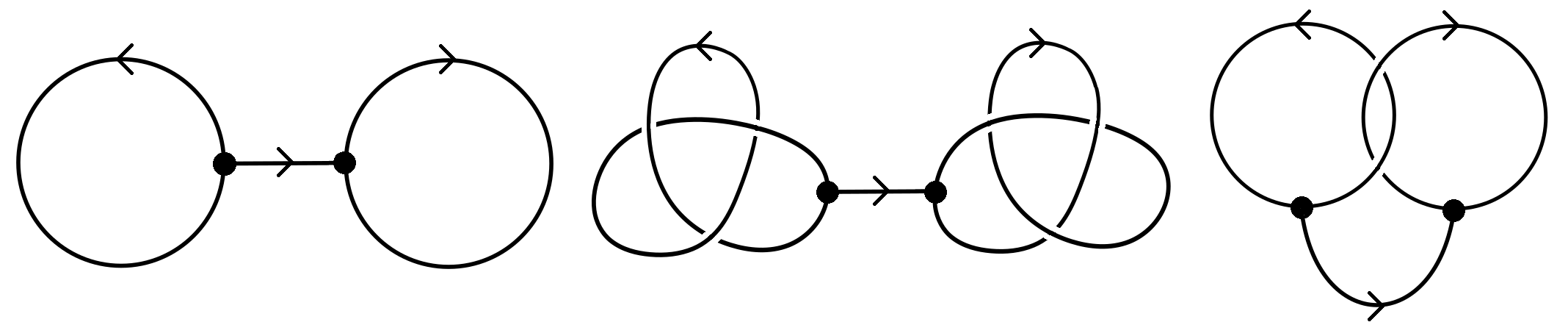}
\caption{Spatial handcuff graphs. The rightmost one has no cut edge as a spatial graph.}
\label{fig:3handcuff}
\end{figure}

\subsection{Main results}

\begin{thm}
\label{thm:cutedge}
Let $(f,\omega)$ be an MOY graph.
\begin{enumerate}
    \item If $f$ has a sink or source, then $\widehat{HF}(f,\omega)=0$.
    \item If $f$ has a cut edge as a spatial graph, then $\widehat{HF}(f,\omega)=0$.
\end{enumerate}
\end{thm}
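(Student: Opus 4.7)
The overall plan is to prove (1) directly by a chain-level argument on a graph grid diagram adapted to a sink or source, and to deduce (2) by opening the cut edge along a cutting sphere to produce a new graph satisfying the hypotheses of (1). For (1), let $v$ be a sink (the source case is dual) and fix any graph grid diagram $g$ representing $(f,\omega)$. The structural signature of a sink is that the block of rows and columns corresponding to $v$ contains the $X$-markings of the incoming edges but no $O$-markings, since $v$ has no outgoing edges. I would exploit this asymmetry in one of two equivalent ways: either construct an explicit null-homotopy of $\mathrm{id}_{\widehat{C}(g)}$ by summing over a family of thin rectangles crossing the $v$-block, using the absence of $O$-markings to guarantee that each such rectangle contributes in the hat version; or filter $\widehat{C}(g)$ by the $v$-block and show that every associated graded piece is acyclic. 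In either form the underlying principle is that, at a sink, the hat complex lacks the algebraic variable that would close up the loop at $v$, so every cycle supported there is automatically a boundary. The balance condition $\sum_{e\in\mathrm{In}(v)}\omega(e)=0$ is automatic because $\mathrm{Out}(v)=\emptyset$, so the argument is independent of the coloring $\omega$.

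For (2), let $e$ be a cut edge with cutting sphere $\Sigma$. I would reduce to (1) by a three-step topological modification. First, subdivide $e$ by inserting two bivalent vertices $u_1,u_2$ on either side of the point $\Sigma\cap e$ and extend $\omega$ in the unique balanced way; subdivision invariance of the hat grid homology keeps $\widehat{HF}$ unchanged. Second, choose a graph grid diagram for the subdivided graph in which $\Sigma$ is realized as a coordinate circle on the grid torus separating the two pieces of $f(G)\setminus\Sigma$; this should be possible because $\Sigma$ meets the graph transversely at only one point, on $e$. Third, delete the open segment of the subdivided edge between $u_1$ and $u_2$ to obtain a graph grid diagram for an MOY graph $f'$ with $u_1$ a sink and $u_2$ a source. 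Because $\Sigma$ crosses no markings other than those on the deleted segment, the grid chain complexes of the original and the modified diagram agree, yielding $\widehat{HF}(f,\omega)\cong\widehat{HF}(f',\omega')=0$ by part (1).

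The hard part is the second step of (2): producing a graph grid diagram in which $\Sigma$ is realized as a coordinate circle. This is a combinatorial realization of the topological cutting sphere and will likely require a stabilization argument adapted to $\Sigma$, together with careful bookkeeping of how it separates the markings. A secondary difficulty in (1) is to set up the null-homotopy directly in the hat model of \cite{Heegaard_Floer_homology_of_spatial_graphs}, since one cannot invoke the minus version (which is not defined in the presence of sinks or sources) to mediate the argument.
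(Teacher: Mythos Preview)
Your approach to part (1) is close in spirit to the paper's, which also exploits the row of the $O^*$-marking at the sink/source. The paper writes $\widetilde{CF}(g)$ as a mapping cone $\mathrm{Cone}(\partial_N^I\colon \widetilde N\to\widetilde I)$ according to whether a state contains the corner point $c$ adjacent to that $O^*$, and then shows $\partial_N^I$ is a quasi-isomorphism by an explicit homotopy counting rectangles that are \emph{allowed} to pass through the sink's $O^*$. Your description of the ``structural signature'' is slightly off (the vertex is a single $O^*$-marking, not a block, and that $O^*$ is itself an $O$-marking), but once corrected your null-homotopy idea is essentially the same argument.

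Your approach to part (2), however, has a genuine gap at the third step. After you delete the open segment between $u_1$ and $u_2$, the resulting grid diagram differs from the original by the removal of at least one $X$-marking (the one encoding the short edge from $u_1$ to $u_2$). Rectangles that previously contained that $X$ were forbidden in the original differential but are now permitted; the two tilde (or hat) chain complexes therefore have the \emph{same} generators but \emph{different} differentials, and there is no reason for them to be isomorphic or even quasi-isomorphic. Your justification ``$\Sigma$ crosses no markings other than those on the deleted segment'' does not address this: it is precisely the deleted markings that change the differential. Any argument establishing $\widehat{HF}(f,\omega)\cong\widehat{HF}(f',\omega')$ would already be doing the real work of the theorem.

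The paper proceeds quite differently for (2). It does realize the cutting sphere as you suggest---a $2n\times 2n$ diagram whose upper-left and lower-right $n\times n$ blocks carry the two halves and whose off-diagonal blocks carry only the single $X$ of the cut edge---but it then analyzes $\widetilde{CF}(g)$ directly. The complex is filtered by how many points of a state lie in the off-diagonal blocks, giving pieces $I_k$, $N_k$; each of these is shown to be acyclic by decomposing it into tensor products of small auxiliary complexes (built from grid-like diagrams with many $X$'s and a single $O^*$) whose homology is computed by hand. This is substantially more intricate than your proposed reduction, and the paper devotes an entire preparatory section to constructing and analyzing those auxiliary complexes.
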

This theorem is a kind of combinatorial version of the triviality of the sutured Floer homology for a non-taut sutured manifold \cite[Proposition 9.18]{Holomorphic-discs-and-sutured-manifolds}.

If $f$ has a sink or source $v$, then The corresponding sutured manifold $(E(f), \gamma(f))$ has a suture $s$ associated with $v$ as the boundary of the transverse disk at $v$.
Then $R_+(\gamma)$ or $R_-(\gamma)$ will contain a disjoint disk and hence $(E(f), \gamma(f))$ is not taut.
If $f$ has a cut edge as a spatial graph, then the suture associated with the cut edge will be trivial, and $(E(f), \gamma(f))$ is not taut.

We define the disjoint union, the connected sum, and the wedge sum of two MOY graphs as follows:
\begin{dfn}
\label{dfn: disjoint-connected-wedge-sum}
Suppose $(f_1,\omega_1)$ and $(f_2,\omega_2)$ are two MOY graphs and $(v_1,v_2)\in V(f_1)\times V(f_2)$.
\begin{enumerate}[(1)]
\item Let $(f_1\sqcup f_2,\omega_1\sqcup\omega_2)$ be an MOY graph as a disjoint union of $(f_1,\omega_1)$ and $(f_2,\omega_2)$, where $\omega_1\sqcup\omega_2$ is naturally determined by $\omega_1,\omega_2$.
\item If $\omega_1(v_1)=\omega_2(v_2)$, let $(f_1\#_{(v_1,v_2)}f_2,\omega_1\#\omega_2)$ be an MOY graph obtained from $(f_1\sqcup f_2,\omega_1\sqcup\omega_2)$ as in Figure \ref{fig:3MOY}, where $\omega_1\#\omega_2$ is naturally determined by $\omega_1,\omega_2$.
\item Let $(f\vee_{(v_1,v_2)} f_2,\omega_1\vee\omega_2)$ be an MOY graph obtained from $f_1\sqcup f_2$ by identifying $v_1$ and $v_2$, where $\omega_1\vee\omega_2$ is naturally determined by $\omega_1,\omega_2$.
\end{enumerate}
\begin{figure}
\centering
\includegraphics[scale=0.4]{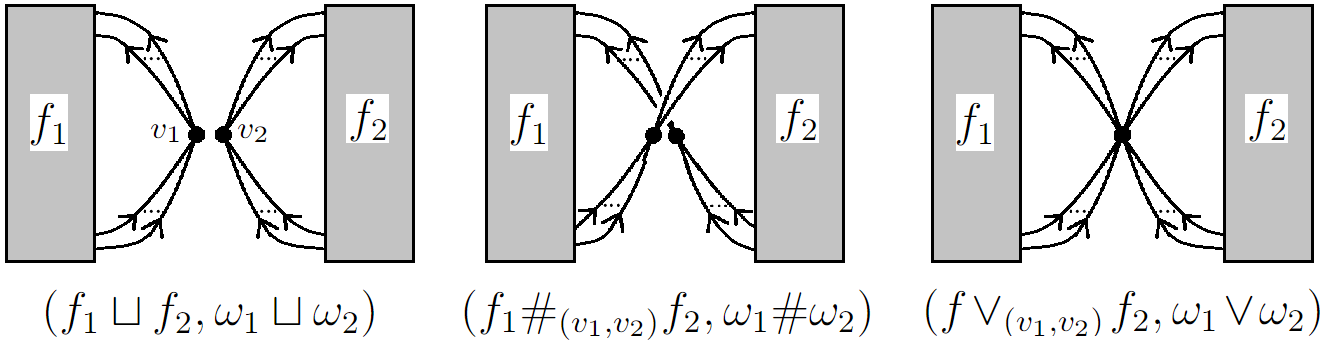}
\caption{MOY graphs in Definition \ref{dfn: disjoint-connected-wedge-sum}}
\label{fig:3MOY}
\end{figure}
\end{dfn}

\begin{cor}
\label{cor:wedge sum}
Let $(f_1,\omega_1)$ and $(f_2,\omega_2)$ be two MOY graphs.
For any pair $(v_1,v_2)\in V(f_1)\times V(f_2)$, we have
\begin{align*}
\widehat{HF}(f\vee_{(v_1,v_2)} f_2,\omega_1\vee\omega_2)\cong\widehat{HF}(f_1,\omega_1)\otimes\widehat{HF}(f_2,\omega_2).
\end{align*}
as absolute Maslov graded, relative Alexander graded $\mathbb{F}$-vector spaces.
\end{cor}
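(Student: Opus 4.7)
The plan is to reduce the wedge-sum computation to the cut-edge vanishing result via a comparison between two closely related graph grid diagrams, in the spirit of the commented-out outline at the end of the introduction. Concretely, I would fix graph grid diagrams $g_1$ for $(f_1,\omega_1)$ and $g_2$ for $(f_2,\omega_2)$, arranged (after commutation and stabilization moves if necessary) so that the vertex $v_1$ lies in a corner of $g_1$ and $v_2$ lies in the opposite corner of $g_2$. Placing $g_1$ and $g_2$ diagonally in a single larger grid and merging the two corner cells produces a graph grid diagram $g_\vee$ for $(f_1\vee_{(v_1,v_2)}f_2,\omega_1\vee\omega_2)$. Instead merging the two corners via an inserted bridge edge $e$ between copies of $v_1$ and $v_2$ produces a graph grid diagram $g_{\mathrm{cut}}$ for an MOY graph in which $e$ is a cut edge, so Theorem~\ref{thm:cutedge}(2) gives $\widehat{HF}(g_{\mathrm{cut}})=0$.

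Next I would write the two chain complexes as mapping cones sharing the same source and target. Splitting each differential according to whether a counted empty rectangle crosses a distinguished marking at the wedge vertex (respectively the cut edge) yields
\begin{align*}
\widehat{CF}(g_\vee)\cong\mathrm{Cone}(\phi\colon I\to N),\qquad
\widehat{CF}(g_{\mathrm{cut}})\cong\mathrm{Cone}(\phi'\colon I\to N).
\end{align*}
The essential combinatorial point is that no empty rectangle in either diagram can cross between the $g_1$-block and the $g_2$-block, because such a rectangle would have to enclose one of the vertex markings. Consequently $I$ and $N$ split as tensor products of the form $\widehat{CF}(g_1)\otimes\widehat{CF}(g_2)$, up to grading shifts determined by the markings at the wedge point.

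Since $\widehat{HF}(g_{\mathrm{cut}})=0$, the map $\phi'$ is a quasi-isomorphism, which confirms that $I\simeq N\simeq\widehat{CF}(g_1)\otimes\widehat{CF}(g_2)$. The final step is to identify $\mathrm{Cone}(\phi)$ itself with this tensor product. I expect the main obstacle to lie precisely here: merely knowing that $\phi'$ is a quasi-isomorphism does not force $\mathrm{Cone}(\phi)$ to recover the tensor product rather than a shifted direct sum of two copies of it, so one has to analyze $\phi$ directly and show that it is either null-homotopic to an explicit isomorphism on a summand or chain-homotopic to a map whose cone collapses correctly. A careful choice of grid positions in the first step, together with the grading conventions of Definition~\ref{dfn:balanced} and the stabilization and commutation invariance results of \cite{Heegaard_Floer_homology_of_spatial_graphs}, should suffice to carry out this final identification and to verify that both the absolute Maslov and the relative Alexander gradings are preserved.
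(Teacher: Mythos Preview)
Your overall philosophy---compare the wedge-sum grid to a cut-edge grid and exploit Theorem~\ref{thm:cutedge}(2)---matches the paper's, but the mechanism you describe has two concrete problems. First, the claimed decompositions $\widehat{CF}(g_\vee)\cong\mathrm{Cone}(\phi\colon I\to N)$ and $\widehat{CF}(g_{\mathrm{cut}})\cong\mathrm{Cone}(\phi'\colon I\to N)$ with the \emph{same} $I$ and $N$ cannot hold: the two grids have different sizes. If $g_1,g_2$ are $n\times n$, then $g_\vee$ is $(2n-1)\times(2n-1)$ while $g_{\mathrm{cut}}$ needs an extra row and column for the bridge and is $2n\times 2n$; their state sets have different cardinalities. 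Second, your ``essential combinatorial point'' that no empty rectangle crosses between the $g_1$- and $g_2$-blocks is false: rectangles can and do pass through the empty off-diagonal block, so neither piece of the cut-edge complex is a tensor product on the nose. This is precisely why the paper's proof of Theorem~\ref{thm:cutedge}(2) needs the filtration by $I_k,N_k$ and the acyclicity of each piece with $k\ge1$.

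The paper sidesteps both issues with a different identification. Writing $\widetilde{CF}(g_{\mathrm{cut}})=\mathrm{Cone}(\partial_I^N\colon\widetilde I\to\widetilde N)$ where $\widetilde I$ is the span of states containing the corner point $c=\alpha_{n+1}\cap\beta_{n+1}$, the map $\mathbf x\cup\{c\}\mapsto\mathbf x$ is an isomorphism of chain complexes $\widetilde I\to\widetilde{CF}(g_\vee)$. So the wedge complex is not a cone with the same pieces as the cut-edge complex; it \emph{is} one of the pieces. The internal filtration from Section~\ref{sec:str-of-cpx} then gives $H(\widetilde I)\cong H(I_1)\cong H(N_0)\llbracket1,0\rrbracket$, and only $N_0$---the span of states with no off-diagonal points---is the honest tensor product $\widetilde{CF}(g_1)\otimes\widetilde{CF}(g_2)$ (Lemma~\ref{lem:N0-CFxCF}). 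Proposition~\ref{prop:tilde-hat} then converts the tilde statement to the hat statement. The obstacle you flagged (analyzing $\phi$ directly) never arises, but in exchange you need the fine structure of the cut-edge argument, not merely the vanishing of its homology.
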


Let $W(i)$ be a two-dimensional graded vector space $W(i)\cong\mathbb{F}_{0,0}\oplus\mathbb{F}_{-1,-i}$, where $\mathbb{F}=\mathbb{Z}/2\mathbb{Z}$.
For a bigraded $\mathbb{F}$-vector space $X$, the corresponding \textbf{shift} of $X$, denoted $X\llbracket a,b\rrbracket$, is the bigraded $\mathbb{F}$-vector space so that $X\llbracket a,b\rrbracket_{d,s}=X_{d+a,s+b}$.
Then, we have 
\[
X\otimes W(i)\cong X\oplus X\llbracket1,i\rrbracket.
\]

\begin{thm}
\label{thm:connected-sum}
Let $(f_1,\omega_1)$ and $(f_2,\omega_2)$ be two MOY graphs.
Let $(v_1,v_2)\in V(f_1)\times V(f_2)$ be a pair of vertices with $\omega_1(v_1)=\omega_2(v_2)$.
Then we have
\[
\widehat{HF}(f_1\#_{(v_1,v_2)}f_2,\omega_1\#\omega_2) \cong\widehat{HF}(f_1,\omega_1)\otimes\widehat{HF}(f_2,\omega_2)\otimes W(\omega_1(v_1)),
\]
as absolute Maslov graded, relative Alexander graded $\mathbb{F}$-vector spaces.
\end{thm}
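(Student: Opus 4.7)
My plan is to follow the strategy outlined in the author's commented-out aside: realize the grid chain complex of the connected sum and the grid chain complex of a nearby cut-edge diagram simultaneously as mapping cones over the same two chain complexes, and then exploit Theorem \ref{thm:cutedge} to collapse the cut-edge side. First I would pick graph grid diagrams $g_1$ for $(f_1,\omega_1)$ and $g_2$ for $(f_2,\omega_2)$ in which the distinguished vertices $v_1$ and $v_2$ each appear inside a standard ``block'' of markings whose size is governed by $\omega_1(v_1)=\omega_2(v_2)$, and then form two combined diagrams glued along these blocks: a grid diagram $g$ realizing $(f_1\#_{(v_1,v_2)}f_2,\omega_1\#\omega_2)$, and a variant $g'$ realizing a spatial graph in which the newly created arc joining the two halves is a cut edge.

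The next step is to split both chain complexes according to a combinatorial condition on generators (roughly, the position of a marker within the merging region), obtaining
\[
\widehat{CF}(g) = \mathrm{Cone}(\phi\colon A \to B), \qquad \widehat{CF}(g') = \mathrm{Cone}(\phi'\colon A \to B),
\]
with the \emph{same} subcomplexes $A$ and $B$ on both sides, since away from the merging region the two diagrams are identical. A careful analysis of the rectangles involved, presumably decomposing them into the ``two special acyclic chain complexes'' that the commented-out passage alludes to, should identify $A$ and $B$ each, up to quasi-isomorphism, with $\widehat{CF}(g_1)\otimes\widehat{CF}(g_2)$, and should place $B$ in bidegree shifted relative to $A$ by Maslov $-1$ and Alexander $-\omega_1(v_1)$.

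Applying Theorem \ref{thm:cutedge} to $g'$ gives $\widehat{HF}(g')=0$, so $\mathrm{Cone}(\phi')$ is acyclic and $\phi'$ is a quasi-isomorphism $A\xrightarrow{\sim}B$. The crucial remaining step is to show that $\phi$, by contrast, induces the zero map on homology. The natural way to see this is to compare the rectangles defining $\phi$ with those defining $\phi'$: the former must cross an $X$-marking that is killed in the hat version while the latter need not (or vice versa), forcing $\phi$ to be null-homotopic. Given this, $H(\mathrm{Cone}(\phi))\cong H(A)\oplus H(B)$, and inserting the grading shifts identified above turns this into $\widehat{HF}(f_1,\omega_1)\otimes\widehat{HF}(f_2,\omega_2)\otimes W(\omega_1(v_1))$.

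The main obstacle will be the combinatorial core of the second and third paragraphs: identifying $A$ and $B$ with $\widehat{CF}(g_1)\otimes\widehat{CF}(g_2)$ up to the correct grading-shifted quasi-isomorphism, and pinning down precisely why $\phi$ is null-homotopic while $\phi'$ is a quasi-isomorphism. This should require a delicate examination of the rectangles near the merged vertex and of which $X$-markings are set to zero in the hat version, rather than a purely formal mapping-cone argument; this is also the point at which the hypothesis $\omega_1(v_1)=\omega_2(v_2)$ and the transverse condition at the vertices should play their essential role.
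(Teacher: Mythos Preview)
Your outline is essentially the paper's proof: compare the connected-sum diagram $g_\#$ to a cut-edge diagram $g$ differing only in a small block near $c$, identify the two pieces $\widetilde I_\#,\widetilde N_\#$ with (shifted copies of) $\widetilde{CF}(g_1)\otimes\widetilde{CF}(g_2)$ via the structural lemmas from the cut-edge analysis (Lemmas~\ref{lem:C-is-acyclic}--\ref{lem:I2-Nn-is-acyclic}, \ref{lem:N0-CFxCF}, \ref{lem:Ihash-Nhash}), and show the connecting map for $g_\#$ vanishes on homology because the relevant rectangles must cross an $X$-marking (Lemma~\ref{lem:partial-hash-trivial}). Two small execution points to watch: in the paper the two cones actually run in \emph{opposite} directions ($\widetilde N$ is the subcomplex for $g$ while $\widetilde I_\#$ is for $g_\#$), so your ``same $A\to B$'' picture needs the identification of Lemma~\ref{lem:Ihash-Nhash} rather than literal equality, and only $H(\phi)=0$ is established (not null-homotopy), which already suffices.
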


As a corollary of Theorem \ref{thm:connected-sum}, we give a purely combinatorial proof of a K\"{u}nneth formula for the knot Floer homology of connected sums.

\begin{cor}
\label{cor:connected-sum-knot}
Let $L_1$ and $L_2$ be two links.
Let $L_1\# L_2$ be the link obtained from the disjoint union of $L_1$ and $L_2$, via a connected sum of $K_1\in L_1$ and $K_2\in L_2$.
Then we have
\begin{align*}
\widehat{HFK}(L_1\# L_2)\cong\widehat{HFK}(L_1)\otimes\widehat{HFK}(L_2),
\end{align*}
as bigraded $\mathbb{F}$-vector spaces.
\end{cor}
This corollary is the combinatorial version of \cite[Theorem 1.4]{Holomorphic-disks-link-invariants-and-the-multi-variable-Alexander-polynomial}.

\begin{thm}
\label{thm:disjoint}
Let $(f_1,\omega_1)$ and $(f_2,\omega_2)$ be two MOY graphs.
Then we have
\[
\widehat{HF}(f_1\sqcup f_2,\omega_1\sqcup\omega_2) \cong\widehat{HF}(f_1,\omega_1)\otimes\widehat{HF}(f_2,\omega_2)\otimes W(0),
 \]
as absolute Maslov graded, relative Alexander graded $\mathbb{F}$-vector spaces.
\end{thm}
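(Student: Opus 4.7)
The plan is to mirror the strategy used for Theorem \ref{thm:connected-sum}, exploiting the vanishing result of Theorem \ref{thm:cutedge}(2) through a mapping cone comparison. Starting from graph grid diagrams $g_1$ and $g_2$ for $(f_1,\omega_1)$ and $(f_2,\omega_2)$, I first construct a graph grid diagram $g$ for the disjoint union $(f_1\sqcup f_2,\omega_1\sqcup\omega_2)$ by placing $g_1$ and $g_2$ in block-diagonal position within a single larger grid.

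Next, I introduce an auxiliary graph grid diagram $g'$ representing some MOY graph that contains a cut edge, designed to differ from $g$ only in a small local region near the interface between the two blocks. The key is to produce compatible splittings
\[
\widehat{CF}(g)=\mathrm{Cone}(\phi\colon A\to B),\qquad \widehat{CF}(g')=\mathrm{Cone}(\phi'\colon A\to B),
\]
with the \emph{same} pair of chain complexes $A$ and $B$, and with $A$ and $B$ each quasi-isomorphic to $\widehat{CF}(g_1)\otimes\widehat{CF}(g_2)$ up to an overall grading shift. Because $g$ and $g'$ agree outside the local region, the rectangles contributing to $\phi$ versus $\phi'$ can be enumerated explicitly and compared.

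From Theorem \ref{thm:cutedge}(2) we have $\widehat{HF}(g')=0$, so the mapping cone identity forces $\phi'$ to be a quasi-isomorphism. A direct comparison of the local regions then shows that in $g'$ the cut-edge configuration supplies an extra family of empty rectangles realizing this chain equivalence, whereas in the disjoint-union diagram $g$ no such rectangles exist, so the induced map $[\phi]$ on homology vanishes. This yields
\[
\widehat{HF}(g)\cong H_*(A)\oplus H_*(B)\llbracket 1,0\rrbracket \cong \widehat{HF}(f_1,\omega_1)\otimes \widehat{HF}(f_2,\omega_2)\otimes W(0),
\]
where the last step uses $X\otimes W(0)\cong X\oplus X\llbracket 1,0\rrbracket$. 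Unlike Theorem \ref{thm:connected-sum}, no vertex of weight $\omega_1(v_1)$ is being collapsed, so the Alexander shift between the two summands is $0$, accounting for the factor $W(0)$ rather than some $W(i)$ with $i\neq 0$.

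The main obstacle will be organizing the explicit identification of $A$ and $B$ with $\widehat{CF}(g_1)\otimes \widehat{CF}(g_2)$ and tracking both the Maslov and Alexander gradings through the reduction. In particular, verifying that no residual Alexander shift appears requires a careful weight-by-weight accounting across the combined grid, together with a rigorous check that the rectangles one might naively use to build a nontrivial $\phi$ in $g$ are genuinely absent rather than merely cancelling in pairs. Once this bookkeeping is in place, the conclusion follows formally from the mapping cone long exact sequence and the triviality of $\widehat{HF}(g')$.
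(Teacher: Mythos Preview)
Your outline imports the connected-sum template too optimistically; the disjoint-union case is genuinely harder, and the paper's proof is structured quite differently.

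The first problem is the construction of the auxiliary cut-edge diagram.  In the connected-sum proof (Section~7) the two diagrams $g$ and $g_\#$ differ only by rearranging the markings inside a single $2\times 2$ block, so the pieces $\widetilde I,\widetilde N$ and $\widetilde I_\#,\widetilde N_\#$ are literally identified.  For a bare disjoint-union diagram there is no analogous local modification that produces a cut-edge diagram with the \emph{same} $A$ and $B$: passing to a cut edge introduces an extra $X$-marking in the off-diagonal block, which changes which rectangles are empty and hence changes the chain complexes themselves.  The paper handles this by a different device: it enlarges $g_\sqcup$ to a diagram $g'$ for $f_1\sqcup f_2\sqcup\mathcal O\sqcup\mathcal O$ by inserting two extra unknot rows/columns (Lemma~\ref{lem:adding-unknot}), so that the two diagonal blocks become \emph{good} diagrams in the sense of Definition~\ref{dfn:good-graph-diagram}.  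No cut-edge comparison diagram is used.

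The second, more serious, problem is the inference ``$\phi'$ is a quasi-isomorphism and $\phi$ lacks the extra rectangles, hence $[\phi]=0$ on homology''.  This is not a valid deduction: removing rectangles from a quasi-isomorphism can leave a map that is nonzero on homology.  In the connected-sum case this step works for a specific reason (Lemma~\ref{lem:partial-hash-trivial}): every rectangle from $N_{\#0}$ into $\widetilde I_\#$ is forced to cross one of the local $X$-markings, so $\partial_\#|_{N_{\#0}}$ is literally zero at the chain level.  In the disjoint-union setting there \emph{are} nonzero rectangles from $I_1$ to $N_0$ (the paper even remarks that $\partial'|_{I_1}$ is not an isomorphism here), and proving that the induced map on homology vanishes is the main content of the argument.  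The paper does this as Proposition~\ref{prop:partial-disjoint-trivial}: a four-case analysis in which, for each cycle $\sum_i\mathbf x_i$ in $I_1$, one explicitly constructs an element of $\widetilde N$ (via auxiliary linear maps $F,G,H,I$ into $N_1$) whose boundary equals $\partial'(\sum_i\mathbf x_i)$.  This is a substantial combinatorial verification, not a consequence of a comparison with a cut-edge diagram.  Your proposal would need to supply an argument of this kind, and the present sketch does not.
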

This theorem is the combinatorial version of \cite[Proposition 9.15]{Holomorphic-discs-and-sutured-manifolds}.

\subsection{Outline of the paper.}
In Section 2, we quickly review the grid homology for MOY graphs.
In Section 3, we give the proof of Theorem \ref{thm:cutedge}.
In Section 4, we define some acyclic chain complexes used for the proof of Theorem \ref{thm:connected-sum}.
In Section 5, we prove Theorem \ref{thm:connected-sum}.
In Sections 6-8, we verify Corollaries \ref{cor:wedge sum} and \ref{cor:connected-sum-knot}, and Theorem \ref{thm:disjoint} respectively.
Finally, in Section 9, we give an application of Theorem \ref{thm:cutedge} and some examples.

\section{grid homology for general MOY graphs}
\subsection{The definition of the grid chain complex}
\label{sec:dfn-of-grid-homology}

This section provides an overview of the grid homology for MOY graphs.
It can be defined immediately from the grid homology for transverse spatial graphs by modifying its Alexander grading.
For the grid homology for sinkless and sourceless transverse spatial graphs, see \cite{Heegaard_Floer_homology_of_spatial_graphs}.

Harvey and O'Donnol \cite{Heegaard_Floer_homology_of_spatial_graphs} defined the minus version of the grid homology for transverse spatial graphs and then the hat and tilde versions.
The sinkless and sourceless condition is necessary for their minus version but is not necessary for the tilde and hat versions.
In fact, the minus version requires this condition to ensure that $\partial^-\circ\partial^-=0$.
Referring to \cite[Remark 4.6.13]{grid-book}, we will introduce the tilde and hat versions without using the minus version.

A \textbf{planar graph grid diagram} $g$ is an $n\times n$ grid of squares some of which are decorated with an $X$- or $O$- (sometimes $O^*$-) marking with the following conditions.
\begin{enumerate}[(i)]
\item There is exactly one $O$ or $O^*$ on each row and column.
\item If a row or column has no $X$ or more than one $X$, then the row or column has $O^*$.
\item $O$'s (or $O^*$'s) and $X$'s do not share the same square.
\end{enumerate}
We denote the set of $O$- and $O^*$-markings by $\mathbb{O}$, the set of $O^*$-markings by $\mathbb{O}^*$, and the set of $X$-markings by $\mathbb{X}$.
We will use the labeling of markings as $\{O_i\}_{i=1}^n$ and $\{X_j\}_{j=1}^m$.
We assume that $O_1,\dots,O_V$ are the $O^*$-markings.

\begin{rem}
In this paper, we allow grid diagrams to have "an isolated $O^*$": an $O^*$-marking with no $X$ in its row and column.
In this case, an isolated $O^*$-marking represents an isolated vertex. 
\end{rem}

A graph grid diagram realizes a transverse spatial graph by drawing horizontal segments from the $O-$ (or $O{}^*-$) markings to the $X$-markings in each row and vertical ones from the $X$-markings to the $O-$ (or $O{}^*-$) markings in each column, and assuming that the vertical segments always cross above the horizontal ones.
$O^*$-markings correspond to vertices of the transverse spatial graph and $O$- and $X$-markings to the interior of edges of a transverse spatial graph.

Throughout the paper, we only consider graph grid diagrams representing MOY graphs: any $O$-marking is connected to some $O^*$-marking by segments.

\begin{dfn}
For a graph grid diagram $g$ representing $f$ with balanced coloring $\omega$, a \textbf{weight} $\omega_g\colon\mathbb{O}\cup\mathbb{X}\to\mathbb{Z}$ is a map naturally determined by $\omega$ as follows;
\begin{itemize}
    \item $\omega_g(O_i)=\omega(e)$ if $O_i$ corresponds to the interior of the edge $e$.
    \item $\omega_g(X_j)=\omega(e)$ if $X_j$ corresponds to the interior of the edge $e$.
    \item $\omega_g(O_i)=\sum_{e\in\mathrm{In}(v)}\omega(e)=\sum_{e\in\mathrm{Out}(v)}\omega(e)$ if $O_i$ is decorated by $*$ and corresponds to the vertex $v$.
\end{itemize}
We abbreviate $\omega_g$ to $\omega$ as long as there is no confusion.
We remark that if $O_i$ represents a sink or source, then $\omega(O_i)=0$.
\end{dfn}

We regard a graph grid diagram as a diagram of the torus obtained by identifying edges in a natural way.
This is called a \textbf{toroidal graph grid diagram}.
We assume that every toroidal diagram is oriented naturally.
We write the horizontal circles and vertical circles which separate the torus into $n\times n$ squares as $\boldsymbol{\alpha}=\{\alpha_i\}_{i=1}^n$ and $\boldsymbol{\beta}=\{\beta_j\}_{j=1}^n$ respectively.

Any two graph grid diagrams representing the same transverse spatial graph are connected by a finite sequence of the graph grid moves \cite[Theorem 3.6]{Heegaard_Floer_homology_of_spatial_graphs}.
The graph grid moves are the following three moves (refer to \cite{Heegaard_Floer_homology_of_spatial_graphs})$\colon$
\begin{itemize}
\item \textbf{Cyclic permutation} (the left of Figure \ref{fig:cyccomm}) permuting the rows or columns cyclically.
\item \textbf{Commutation$'$} (the right of Figure \ref{fig:cyccomm}) permuting two adjacent columns satisfying the following condition; there are vertical line segments $\textrm{LS}_1,\textrm{LS}_2$ on the torus such that (1) $\mathrm{LS}_1\cup\mathrm{LS}_2$ contains all the $X$'s and $O$'s in the two columns, (2) the projection of $\mathrm{LS}_1\cup\mathrm{LS}_2$ to a single vertical circle $\beta_i$ is $\beta_i$, and (3) the projection of their endpoints $\partial(\mathrm{LS}_1)\cup\partial(\mathrm{LS}_2)$ to a single circle $\beta_i$ is precisely two points. Permuting two rows is defined in the same way.
\item \textbf{(De-)stabilization$'$} (Figure \ref{fig:sta'}) let $g$ be an $n\times n$ graph grid diagram and choose an $X$-marking. Then $g'$ is called a stabilization$'$ of $g$ if it is an $(n+1)\times(n+1)$ graph grid diagram obtained by adding a new row and column next to the $X$-marking of $g$, moving the $X$-marking to next column, and putting new one $O$-marking just above the $X$-marking and one $X$-marking just upper left of the $X$-marking. The inverse of stabilization is called destabilization.
\end{itemize}
These moves are also valid for MOY graphs.
Reidemeister moves around sinks and sources can be realized by these moves in the same way as the general vertices.

\begin{figure}
\centering
\includegraphics[scale=0.5]{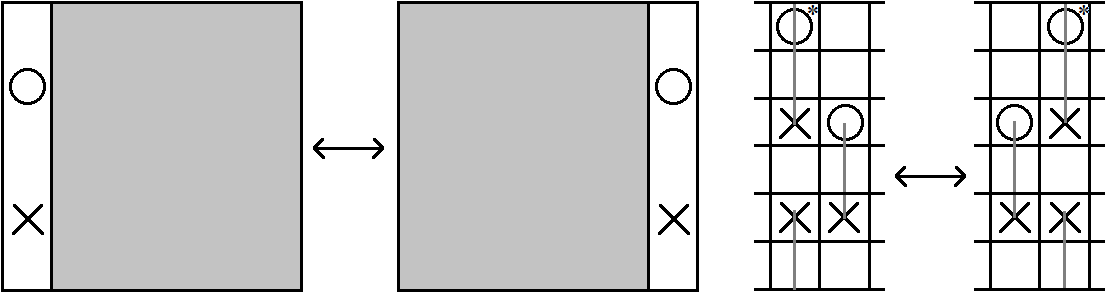}
\caption{Cyclic permutation and commutation$'$, gray lines are $\mathrm{LS}_1$ and $\mathrm{LS}_2$}
\label{fig:cyccomm}
\includegraphics[scale=0.5]{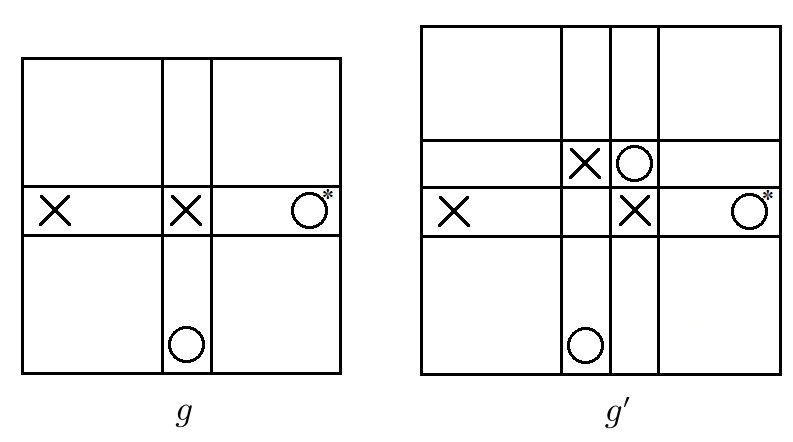}
\caption{stabilization$'$}
\label{fig:sta'}
\includegraphics[scale=0.55]{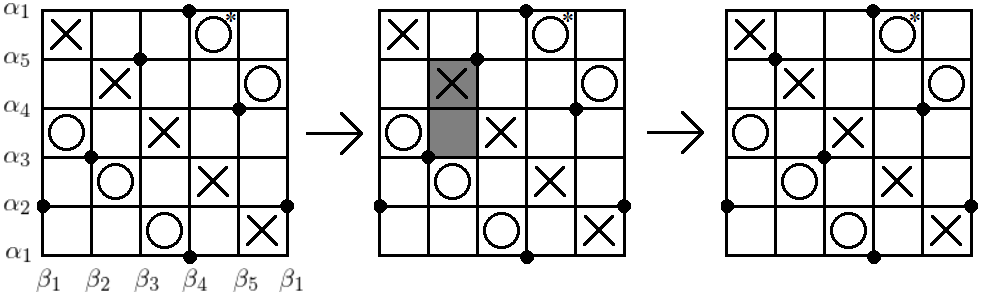}
\caption{An example of a state and a rectangle}
\label{fig:state}
\end{figure}

A \textbf{state} $\mathbf{x}$ of $g$ is a bijection $\boldsymbol{\alpha}\rightarrow\boldsymbol{\beta}$,
in other words, an $n$-tuple of points in the torus such that each horizontal circle has exactly one point of $\mathbf{x}$ and each vertical circle has exactly one point of $\mathbf{x}$.
We denote by $\mathbf{S}(g)$ the set of states of $g$.
We describe a state as $n$ points on the graph grid diagram (Figure \ref{fig:state}).

For $\mathbf{x,y}\in \mathbf{S}(g)$, a \textbf{domain} $p$ from $\mathbf{x}$ to $\mathbf{y}$ is a formal sum of the closure of squares, which is satisfying the following conditions;
\begin{itemize}
    \item $p$ is divided by $\boldsymbol{\alpha}\cup\boldsymbol{\beta}$
    \item $\partial(\partial_\alpha p)=\mathbf{y}-\mathbf{x}$ and $\partial(\partial_\beta p)=\mathbf{x}-\mathbf{y}$, where $\partial_\alpha p$ is the portion of the boundary of $p$ in the horizontal circles $\alpha_1\cup\dots\cup\alpha_n$ and $\partial_\beta p$ is the portion of the boundary of $p$ in the vertical ones.
\end{itemize}
A domain $p$ is \textbf{positive} if the coefficient of any square is nonnegative.
Here, we always consider positive domains.
Let $\pi(\mathbf{x,y})$ denote the set of positive domains from $\mathbf{x}$ to $\mathbf{y}$.

Let $\mathbf{x,y\in S}(g)$ be two states with $|\mathbf{x\cap y}|=n-2$.
An \textbf{rectangle} $r$ from $\mathbf{x}$ to $\mathbf{y}$ is a domain such that $\partial (r)$ is the union of four segments.
A rectangle $r$ is \textbf{empty} if $\mathbf{x}\cap\mathrm{Int}(r)=\mathbf{y}\cap\mathrm{Int}(r)=\emptyset$.
Let $\mathrm{Rect}^\circ(\mathbf{x,y})$ be the set of empty rectangles from $\mathbf{x}$ to $\mathbf{y}$.
If $|\mathbf{x\cap y}|\neq n-2$, then we define $\mathrm{Rect}^\circ(\mathbf{x,y})=\emptyset$.

For two domains $p_1\in\pi(\mathbf{x,y})$ and $p_2\in\pi(\mathbf{y,z})$, the \textbf{composite domain} $p_1*p_2$ is the domain from $\mathbf{x}$ to $\mathbf{z}$ such that the coefficient of each square is the sum of the coefficient of the square of $p_1$ and $p_2$.

\begin{dfn}
Let $\widetilde{CF}(g,\omega)$ be an $\mathbb{F}$-vector space finitely generated by $\mathbf{S}(g)$, with the endmorphism
\[
\widetilde{\partial}(\mathbf{x})=\sum_{\mathbf{y}\in\mathbf{S}(g)}\#\{r\in\mathrm{Rect}^\circ(\mathbf{x,y})|r\cap\mathbb{O}=r\cap\mathbb{X}=\emptyset\}\cdot\mathbf{y},
\]
where $\#\{\cdot\}$ counts rectangles modulo $2$.
\end{dfn}

\begin{dfn}
Let $\widehat{CF}(g,\omega)$ be an $\mathbb{F}$-vector space with basis $\{U_{V+1}^{k_{V+1}}\cdots U_n^{k_n}\cdot\mathbf{x}|k_i\geq0,\mathbf{x\in S}(g)\}$ with the endmorphism defined as
\[
\widehat{\partial}(\mathbf{x})=\sum_{\mathbf{y}\in\mathbf{S}(g)}\left(
\sum_{\{r\in \mathrm{Rect}^\circ(\mathbf{x,y})|r\cap\mathbb{X}=r\cap\mathbb{O}^*=\emptyset\}}
U_{V+1}^{O_{V+1}(r)}\cdots U_n^{O_n(r)}
\right)\mathbf{y}.
\]
\end{dfn}

A \textbf{planar realization} of a toroidal diagram $g$ is a planar figure obtained by cutting it along some $\alpha_i$ and $\beta_j$ and putting it on $[0,n)\times[0,n)\in\mathbb{R}^2$ in a natural way.

For two points $(a_1,a_2),(b_1,b_2)\subset\mathbb{R}^2$, we say $(a_1,a_2)<(b_1,b_2)$ if $a_1<b_1$ and $a_2<b_2$.
For two sets of finitely points $A,B\subset\mathbb{R}^2$, let $\mathcal{I}(A,B)$ be the number of pairs $a\in A,b\in B$ with $a<b$ and let $\mathcal{J}(A,B)=(\mathcal{I}(A,B)+\mathcal{I}(B,A))/2$.

We consider that $n$ points of states are on lattice point on $\mathbb{R}^2$ and each $O$- and $X$-marking is located at $(l+\frac{1}{2},m+\frac{1}{2})$ for some $l,m\in\{0,1,\dots,n-1\}$.
\begin{dfn}
\label{def-ma}
Let $\omega$ be a weight of $g$.
Take a planar realization of $g$.
For $\mathbf{x\in S}(g)$, the Maslov grading $M(\mathbf{x})$ and the Alexander grading $A(\mathbf{x})$ are defined by
\begin{align}
M(\mathbf{x})&=\mathcal{J}(\mathbf{x}-\mathbb{O},\mathbf{x}-\mathbb{O})+1,
\label{eq:Maslov}
\\
A(\mathbf{x})&=\mathcal{J}(\mathbf{x},\sum_{j=1}^m\omega(X_j)\cdot X_j-\sum_{i=1}^n\omega(O_i)\cdot O_i).
\label{eq:Alexander}
\end{align}
These two gradings are extended to the whole of $\widehat{CF}(g,\omega)$ by
\begin{align}
\label{uu}
M(U_i)=-2,\ A(U_i)=-\omega(O_i)\ (i=V+1,\dots,n).
\end{align}
\end{dfn}

The Maslov grading is well-defined as a toroidal diagram \cite[Lemma 2.4]{oncombinatorial}.
The Alexander grading is not well-defined as a toroidal diagram, however, relative Alexander grading $A^{rel}(\mathbf{x,y})=A(\mathbf{x})-A(\mathbf{y})$ is well-defined,\cite[Corollary 4.14]{Heegaard_Floer_homology_of_spatial_graphs}

\begin{prop}
$\widetilde{CF}(g,\omega),\widehat{CF}(g,\omega)$ are an absolute Maslov graded, relative Alexander graded chain complex.
We will denote by $\widetilde{HF}(g,\omega),\widehat{HF}(g,\omega)$ their homology respectively.
\end{prop}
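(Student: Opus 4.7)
The plan is to verify two things for each of $\widetilde{CF}(g,\omega)$ and $\widehat{CF}(g,\omega)$: that the differential squares to zero, and that it is homogeneous of Maslov degree $-1$ and relative Alexander degree $0$. Both parts adapt the template of Manolescu-Ozsv\'{a}th-Szab\'{o}-Thurston \cite{oncombinatorial} to the weighted setting of Harvey-O'Donnol \cite{Heegaard_Floer_homology_of_spatial_graphs}; the well-definedness of $M$ absolutely and $A$ relatively has already been noted.

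For $\partial^{2}=0$, I would pair off composite domains in the standard way. Given $\mathbf{x},\mathbf{z}\in\mathbf{S}(g)$, each contribution to the coefficient of $\mathbf{z}$ in $\partial^{2}(\mathbf{x})$ corresponds to a composite $r_{1}*r_{2}$ of two empty rectangles passing through some intermediate state $\mathbf{y}$. The underlying region is necessarily one of three types: (i) a disjoint union of two rectangles, (ii) an L-shaped hexagon, or (iii) a thin annulus wrapping the torus along a single row or column. Types (i) and (ii) each admit exactly two ordered decompositions into a pair of empty rectangles, and these carry identical products of $U_{i}$'s, so they cancel modulo $2$. For type (iii) I would invoke the defining conditions on a graph grid diagram: a horizontal (resp.\ vertical) thin annulus contains every marking in its row (resp.\ column), and by conditions (i)-(ii) such a row or column either carries exactly one $X$ and one non-$O^{*}$ $O$, or else carries an $O^{*}$. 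In either case the annulus meets $\mathbb{X}\cup\mathbb{O}^{*}$, hence is forbidden in both $\widetilde{\partial}$ and $\widehat{\partial}$. This yields $\partial^{2}=0$.

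For the gradings, I would use the standard rectangle-change formulas
\begin{align*}
M(\mathbf{x})-M(\mathbf{y})&=1-2\sum_{i}O_{i}(r),\\
A(\mathbf{x})-A(\mathbf{y})&=\sum_{j}\omega(X_{j})\,X_{j}(r)-\sum_{i}\omega(O_{i})\,O_{i}(r),
\end{align*}
valid for any $r\in\mathrm{Rect}^{\circ}(\mathbf{x},\mathbf{y})$, where $O_{i}(r)$ and $X_{j}(r)$ denote multiplicities of the markings in $r$. For $\widetilde{\partial}$ the rectangle avoids $\mathbb{O}\cup\mathbb{X}$, so $M$ shifts by $-1$ and the Alexander shift vanishes. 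For $\widehat{\partial}$ the rectangle only avoids $\mathbb{X}\cup\mathbb{O}^{*}$, but the prefactor $U_{V+1}^{O_{V+1}(r)}\cdots U_{n}^{O_{n}(r)}$ contributes Maslov shift $-2\sum_{i>V}O_{i}(r)$ and Alexander shift $-\sum_{i>V}\omega(O_{i})\,O_{i}(r)$ by \eqref{uu}, which exactly cancels the $O_{i}$-crossings in the rectangle formulas since $O_{i}(r)=0$ for $i\leq V$. The net bidegree is therefore $(-1,0)$ in both cases.

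The main obstacle is the Alexander-change formula for an arbitrary balanced weight $\omega$ in the presence of $O^{*}$-markings. For knots and links this is classical, but the bilinear bookkeeping with $\mathcal{J}$ and weighted markings is fiddlier; I would cite \cite[Corollary 4.14]{Heegaard_Floer_homology_of_spatial_graphs} rather than redo it by hand. The more substantive conceptual point is that the thin-annulus exclusion in $\partial^{2}$ goes through uniformly for both the tilde and hat versions \emph{without} any sinkless/sourceless hypothesis, since the crucial property is only that every row and column of $g$ meets $\mathbb{X}\cup\mathbb{O}^{*}$, which follows directly from conditions (i)--(ii) of a graph grid diagram.
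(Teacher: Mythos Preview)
Your proposal is correct and follows essentially the same route as the paper's proof: the grading shift is obtained from the standard rectangle-change formulas (the paper simply cites \cite[Proposition 4.18]{Heegaard_Floer_homology_of_spatial_graphs}), and $\partial^{2}=0$ is proved by the usual trichotomy of composite domains (the paper's cases (R-1)--(R-3) from \cite[Lemma 4.6.7]{grid-book}), with the key observation that thin annuli are excluded because every row and column meets $\mathbb{X}\cup\mathbb{O}^{*}$. Your write-up is somewhat more explicit than the paper's, but the argument is the same.
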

\begin{proof}
By using the same argument such as \cite[Proposition 4.18]{Heegaard_Floer_homology_of_spatial_graphs}, the differential $\widetilde{\partial}$ and $\widehat{\partial}$ drops the Maslov grading by one and preserves the Alexander grading.

We will use the notations of \cite[Lemma 4.6.7]{grid-book} to show that $\widetilde{\partial}^2=0$ and $\widehat{\partial}^2=0$.
The cases (R-1) and (R-2) can be shown in the same way.
The case (R-3) is slightly different.
When $\mathbf{x=z}$, the composite domain of two empty rectangles is a thin annulus because the rectangles are empty.
Since every row and column has an (isolated) $O^*$-marking or at least one $X$-marking, we can not take such a domain in the hat and tilde versions.
\end{proof}

\subsection{The invariance of \texorpdfstring{$\widehat{HF}$}{HF}}
The invariance of our hat version follows immediately from the invariance of the hat version of Harvey and O'Donnol \cite{Heegaard_Floer_homology_of_spatial_graphs} because our definitions except for the Alexander grading are the same as theirs.
To prove the invariance, it is sufficient to recall the chain maps Harvey and O'Donnol gave and to take the induced maps.
So the following propositions are shown immediately.
\begin{prop}
Let $g$ and $g'$ be two graph grid diagrams for an MOY graph $(f,\omega)$.
Let $\omega_g$ and $\omega_{g'}$ be weights for $g$ and $g'$ respectively determined by $\omega$.
Then there is an isomorphism of absolute Maslov graded, relative Alexander graded $\mathbb{F}$-vector spaces
\[
\widehat{HF}(g,\omega_g)\cong\widehat{HF}(g',\omega_{g'}).
\]
We will denote by $\widehat{HF}(f,\omega)$.
\end{prop}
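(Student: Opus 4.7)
The plan is to reduce to invariance under the elementary graph grid moves and then to check that the chain maps Harvey--O'Donnol already constructed continue to work in our setting, with the only new point being that they respect our (relative) Alexander grading. By \cite[Theorem 3.6]{Heegaard_Floer_homology_of_spatial_graphs}, any two graph grid diagrams representing the same transverse spatial graph (and hence the same MOY graph) are connected by a finite sequence of cyclic permutations, commutation$'$ moves, and (de-)stabilization$'$ moves, and cyclic permutation is trivial on the toroidal picture. So it suffices to produce, for each commutation$'$ and each (de-)stabilization$'$ move, a chain homotopy equivalence of absolute Maslov graded, relative Alexander graded chain complexes $\widehat{CF}(g,\omega_g)\to \widehat{CF}(g',\omega_{g'})$.

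For each of these moves Harvey and O'Donnol define chain maps on their minus version by counting certain domains (empty pentagons for commutation$'$, empty ``L-shaped'' domains for stabilization$'$) that avoid $\mathbb{X}$, weighted by $U_1^{O_1(p)}\cdots U_n^{O_n(p)}$. Setting $U_1=\dots=U_V=0$ gives their hat-version maps; the same formula, restricted to domains that in addition avoid $\mathbb{O}^*$ and weighted only by $U_{V+1},\dots,U_n$, defines our maps. Because our differential $\widehat{\partial}$ is built by the identical recipe, the combinatorial identities that Harvey--O'Donnol use to show their maps are chain maps and chain homotopy equivalences (enumerating how composite domains decompose, together with the usual ``thin annulus'' cancellation) carry over verbatim; the only change is that more terms are simply killed by the $U_i=0$ substitution. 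The Maslov grading formula \eqref{eq:Maslov} is unchanged and $M(U_i)=-2$ as before, so homogeneity in Maslov grading is exactly what Harvey--O'Donnol already proved.

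It remains to check homogeneity with respect to the new relative Alexander grading. The key is the identity
\[
A(\mathbf{x})-A(\mathbf{y})=\sum_{j=1}^{m}\omega(X_j)\,X_j(p)-\sum_{i=1}^{n}\omega(O_i)\,O_i(p)
\]
for any domain $p\in\pi(\mathbf{x},\mathbf{y})$, which follows from \eqref{eq:Alexander} together with the standard computation expressing $\mathcal{J}$-differences along a domain as weighted intersections with the markings (compare \cite[Corollary 4.14]{Heegaard_Floer_homology_of_spatial_graphs}). For the commutation map, the counted pentagons miss $\mathbb{X}$ and $\mathbb{O}^*$, and the $U_i$-factor shifts Alexander grading by $-\omega(O_i)$ per $U_i$ (by \eqref{uu}); the identity then forces the map to be homogeneous in Alexander grading (up to an overall constant that is absorbed into the shift). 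For (de-)stabilization, one must moreover account for the bigrading shift induced by adding a new row and column with one new $O$ and one new $X$; the verification is parallel to \cite{Heegaard_Floer_homology_of_spatial_graphs,grid-book} and uses that the new $X$ and $O$ receive the same weight (they lie on the same edge), so their contributions in the Alexander identity cancel over the relevant domains.

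The main obstacle is this last point: the stabilization$'$ move. Unlike commutation, it changes the size of the diagram and the generating set, so one must identify $\widehat{CF}(g',\omega_{g'})$ with a mapping cone or direct sum in which one summand is quasi-isomorphic to $\widehat{CF}(g,\omega_g)$, and then argue that the splitting respects both Maslov and relative Alexander gradings with the correct weights $\omega(O_i)$. Once the weight-bookkeeping at the new markings is carried out --- using that the new $O$ is not an $O^*$ (so it contributes a $U_i$ with Alexander weight $-\omega(O_i)$) and that the new $X$ shares its weight --- the same chain homotopy equivalence Harvey--O'Donnol wrote down becomes a graded chain homotopy equivalence in our setting, completing the proof.
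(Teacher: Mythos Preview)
Your approach is correct and essentially the same as the paper's: reduce to single graph grid moves via \cite[Theorem 3.6]{Heegaard_Floer_homology_of_spatial_graphs}, handle cyclic permutation trivially, and for commutation$'$ and stabilization$'$ import Harvey--O'Donnol's domain-counting maps on the minus version, set $U_1=\dots=U_V=0$, and observe that the resulting maps serve equally well on our hat complex since the underlying chain complexes agree except for the Alexander grading. The paper's own proof is in fact terser than yours on the Alexander grading point---it simply notes that the two hat versions coincide except for Alexander grading and that Harvey--O'Donnol's quasi-isomorphisms therefore carry over---whereas you spell out the identity $A(\mathbf{x})-A(\mathbf{y})=\sum\omega(X_j)X_j(p)-\sum\omega(O_i)O_i(p)$ and the weight bookkeeping at the new stabilization markings; this extra care is fine but not strictly needed beyond what the paper asserts. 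One small remark the paper makes that you omit: diagrams here may contain isolated $O^*$-markings (from sinks, sources, or isolated vertices), which do not occur in Harvey--O'Donnol's setting, but the rectangle- and domain-counting arguments go through unchanged.
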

\begin{proof}
Let $g$ and $g'$ be two graph grid diagrams for $f$.
Suppose that
By \cite[Theorem 3.6]{Heegaard_Floer_homology_of_spatial_graphs}, it is sufficient to check the case that $g'$ is obtained by a single graph grid move.

If $g'$ is obtained from $g$ by a single cyclic permutation, then the natural correspondence of states induces the isomorphism of chain complexes $\widehat{CF}(g,\omega_g)\cong \widehat{CF}(g',\omega_{g'})$.

If $g'$ is obtained from $g$ by a single commutation$'$ or stabilization$'$, then the quasi-isomorphism of \cite[Proposition 5.1 or 5.5]{Heegaard_Floer_homology_of_spatial_graphs} induces the quasi-isomorphism of our hat chain complexes.
Let $\phi$ be the quasi-isomorphism of chain complexes of $\mathbb{F}[U_1,\dots,U_V]$-module for their minus version.
We can take the induced map $\widehat{\phi}$ of chain complexes of $\mathbb{F}$-vector space for their hat version by letting $U_1=\dots=U_V$.
Since the definitions of their hat version and our hat version are the same except for the Alexander gradings, $\widehat{\phi}$ works also for our hat chain complex as quasi-isomorphism.
We remark that sometimes $g$ has isolated $O^*$'s for our hat version, but the argument of counting rectangles works similarly.
\end{proof}

\begin{prop}
\label{prop:tilde-hat}
Let $g$ be an $n\times n$ graph grid diagram.
Then there is an isomorphism as absolute Maslov graded, relative Alexander graded $\mathbb{F}$-vector spaces
\begin{equation}
\label{eq:hat=tilde}
\widetilde{HF}(g,\omega_g)\cong\widehat{HF}(g,\omega_g)\bigotimes_{i=V+1}^n W(\omega_g(O_i)).
\end{equation}
\end{prop}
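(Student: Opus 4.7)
The plan is to identify $\widetilde{CF}$ with a quotient of $\widehat{CF}$ and then iteratively extract the $W$-factors via mapping-cone long exact sequences. Directly from the two differentials,
\[
\widetilde{CF}(g,\omega_g)=\widehat{CF}(g,\omega_g)\big/(U_{V+1},\ldots,U_n)
\]
as bigraded chain complexes, because setting each $U_i$ with $i>V$ to zero in $\widehat{\partial}$ kills exactly those rectangles meeting some non-$O^*$ $O$-marking, leaving precisely the rectangles counted by $\widetilde{\partial}$.

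The next step is to show that for each $i\in\{V+1,\ldots,n\}$ the multiplication map $U_i$ is nullhomotopic on $\widehat{CF}(g,\omega)$. For each $X$-marking $X_j$ I would define $\widehat H_{X_j}$ by counting empty rectangles through $X_j$ that avoid the remaining $X$-markings and all $O^*$-markings, weighted by $U_{V+1}^{O_{V+1}(r)}\cdots U_n^{O_n(r)}$. The standard analysis of thin horizontal and vertical annuli through $X_j$ (cf.\ \cite[Lemma 4.6.9]{grid-book}) yields
\[
\partial\widehat H_{X_j}+\widehat H_{X_j}\partial=U_a+U_b,
\]
where $O_a,O_b$ are the $O$- or $O^*$-markings in the row and column of $X_j$, under the convention that $U_\ell:=0$ whenever $O_\ell$ is an $O^*$-marking (thin annuli through $O^*$ are forbidden in $\widehat{CF}$). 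Since $O_i$ lies on an edge of the MOY graph whose two endpoints are $O^*$-markings, traversing the alternating sequence of $O$'s and $X$'s along that edge and telescoping these relations produces $U_i\simeq 0$ on $\widehat{CF}(g,\omega)$.

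Finally, set $C_0:=\widehat{CF}(g,\omega)$ and $C_{k+1}:=C_k/U_{V+k+1}$, so that $C_{n-V}=\widetilde{CF}(g,\omega)$. The short exact sequence
\[
0\to C_k\llbracket 2,\omega(O_{V+k+1})\rrbracket\xrightarrow{\;U_{V+k+1}\;}C_k\to C_{k+1}\to 0
\]
induces a long exact sequence in homology. Because the nullhomotopy constructed above is $\mathbb{F}[U_{V+1},\ldots,U_n]$-linear, it descends to $C_k$ and the connecting map $U_{V+k+1}^{\ast}$ vanishes. The long exact sequence therefore breaks into short exact sequences of $\mathbb{F}$-vector spaces, which split to give
\[
H(C_{k+1})_{d,s}\cong H(C_k)_{d,s}\oplus H(C_k)_{d+1,\,s+\omega(O_{V+k+1})}\cong\bigl(H(C_k)\otimes W(\omega(O_{V+k+1}))\bigr)_{d,s}.
\]
Iterating for $k=0,\ldots,n-V-1$ produces the stated isomorphism.

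The main obstacle I expect is the telescoping argument for $U_i\simeq 0$: one must verify that $\widehat H_{X_j}$ is well-defined under the $O^*$-avoidance condition, that the thin-annulus count correctly handles the degenerate cases near sinks, sources, isolated vertices and loop-edges, and that the composite of the $\widehat H_{X_j}$'s along a whole edge has the correct Maslov and Alexander bidegree so as to furnish a nullhomotopy of $U_i$ with grading shift $(-2,-\omega(O_i))$, which is exactly what guarantees the $W(\omega(O_i))$ factor in the final isomorphism.
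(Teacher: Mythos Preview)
Your proposal is correct and follows essentially the same approach as the paper, which simply cites \cite[Proposition 4.21, Lemma 4.31, Proposition 4.32]{Heegaard_Floer_homology_of_spatial_graphs}: prove $U_i\simeq 0$ on $\widehat{CF}$ via the $X$-marking homotopies and telescoping along the edge toward an $O^*$-marking, then peel off one $W(\omega(O_i))$ factor at a time using the long exact sequence of $0\to\widehat{CF}\xrightarrow{U_i}\widehat{CF}\to\widehat{CF}/U_i\to 0$ with vanishing connecting map. Your phrasing of the iteration via the intermediate quotients $C_k$ and the observation that the nullhomotopies are $\mathbb{F}[U_{V+1},\dots,U_n]$-linear (hence descend to each $C_k$) is exactly what is needed, and the obstacles you flag are handled in the cited reference.
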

\begin{proof}
It is shown by the same arguments as \cite[Proposition 4.21, Lemma 4.31, and Proposition 4.32]{Heegaard_Floer_homology_of_spatial_graphs}.
\end{proof}

\section{The proof of Theorem \ref{thm:cutedge} (1)}
We will check that the grid chain complex for an MOY graph with sinks or sources can be written as a mapping cone $\mathrm{Cone}(\partial_N^I\colon\widetilde{N}\to\widetilde{I})$ such that the chain map $\partial_N^I$ is a quasi-isomorphism.
Then Theorem \ref{thm:cutedge} (1) follows by the standard argument of homological algebra.

\begin{proof}[\textbf{proof of theorem \ref{thm:cutedge} (1)}]
We will show the case of an MOY graph with a source.
The case of a sink can be proved in the same way by reflecting the graph grid diagram along the diagonal line.

Let $(f,\omega)$ be an MOY graph with a source.
Let $f'$ be the spatial graph obtained from $f$ by removing the source and all edges outgoing the source.
Take a balanced coloring $\omega'$ for $f'$ naturally determined by $\omega$.
\begin{figure}
\centering
\includegraphics[scale=0.45]{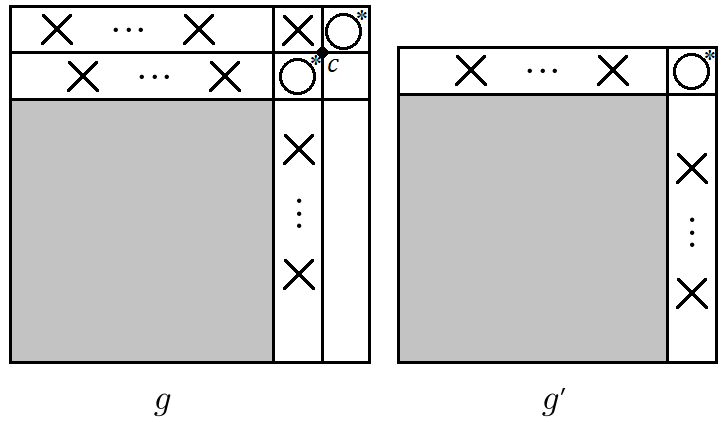}
\caption{Two graph grid diagrams representing $f$ and $f'$ respectively}
\label{fig:sink}
\end{figure}

Choose one vertex $v$ of $f$ adjacent to the source.
Take two graph grid diagrams $g$ and $g'$ for $f$ and $f'$ respectively.
Suppose that $g'$ is obtained from $g$ by removing the top row and rightmost column of $g$ and that the $O^*$-marking of $g$ corresponding to $v$ is in the rightmost square of the top row of $g'$ (Figure \ref{fig:sink}).

According to Proposition \ref{prop:tilde-hat}, it is sufficient to check that the homology of the tilde version vanishes.
Take a point $c=\alpha_{n+1}\cap\beta_{n+1}$.
We will decompose the set of states $\mathbf{S}(g)$ as the disjoint union $\mathbf{I}(g)\cup\mathbf{N}(g)$, where $\mathbf{I}(g)=\{\mathbf{x\in S}(g)|c\in\mathbf{x}\}$ and $\mathbf{N}(g)=\{\mathbf{x\in S}(g)|c\notin\mathbf{x}\}$.
This decomposition gives a decomposition $\widetilde{CF}(g,\omega)=\widetilde{I}\oplus \widetilde{N}$ as a vector space, where $\widetilde{I}$ and $\widetilde{N}$ are the spans of $\mathbf{I}(g),\mathbf{N}(g)$ respectively.
Then we can write the differential on $\widetilde{CF}(g,\omega)$ as
\[
\widetilde{\partial}=
\begin{pmatrix}
\partial_I^I & \partial_N^I\\
0 & \partial_N^N
\end{pmatrix},
\]
and $\widetilde{CF}(g,\omega)=\mathrm{Cone}(\partial_N^I\colon(\widetilde{N},\partial_N^N)\to(\widetilde{I},\partial_I^I))$.

To see that $\widetilde{HF}(g,\omega)=0$, we will check that the chain map $\partial_N^I\colon(\widetilde{N},\partial_N^N)\to(\widetilde{I},\partial_I^I)$ is a quasi-isomorphism.
Let $\mathbb{O}=\{O_0,O_1,\dots,O_n\}$ denote the set of $O$-markings of $g$ and $\mathbb{O}'=\{O_1,\dots,O_n\}$ denote the set of $O$-markings of $g'$.
We will think that $O_0$ is the $O^*$-marking in the topmost column of $g$ representing the sink.

Let $\mathcal{H}\colon\widetilde{I}\to\widetilde{N}$ and $\mathcal{H}_N\colon\widetilde{N}\to\widetilde{N}$ be two linear maps defined by for $\mathbf{x\in I}(g)$ and $\mathbf{y\in N}(g)$,
\begin{align*}
\mathcal{H}(\mathbf{x})=\sum_{\mathbf{y}'\in\mathbf{N}(g)}\#\{r\in\mathrm{Rect}^\circ(\mathbf{x,y'})|r\cap(\mathbb{O}\setminus O_0)=r\cap\mathbb{X}=\emptyset,O_0\in r\}\cdot\mathbf{y}',\\
\mathcal{H}_N(\mathbf{y})=\sum_{\mathbf{y}'\in\mathbf{N}(g)}\#\{r\in\mathrm{Rect}^\circ(\mathbf{y,y'})|r\cap(\mathbb{O}\setminus O_0)=r\cap\mathbb{X}=\emptyset,O_0\in r\}\cdot\mathbf{y}'.
\end{align*}
Then it is straightforward to see that $\partial_N^I\circ\mathcal{H}=\mathrm{id}_{\widetilde{I}}$ and $\mathcal{H}\circ\partial_N^I+\partial_N^N\circ\mathcal{H}_N+\mathcal{H}_N\circ\partial_N^N=\mathrm{id}_{\widetilde{N}}$ by counting all rectangles appearing in these equations.
So $\partial_N^I\colon(\widetilde{N},\partial_N^N)\to(\widetilde{I},\partial_I^I)$ is a quasi-isomorphism.
\end{proof}

\section{The preparations of Theorem \ref{thm:cutedge} (2)}
\label{sec:specialcpx}
\begin{figure}
\centering
\includegraphics[scale=0.5]{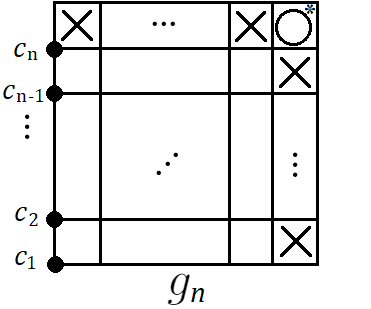}
\caption{The special grid-like diagram $C_n$}
\label{fig:specialC1}
\end{figure}
We will prepare a sequence of acyclic chain complexes.
These chain complexes will appear as subcomplexes of the grid chain complex.
We will introduce certain chain complexes using \textit{grid diagram-like diagrams}.
Let $n\geqq2$ be a fixed integer and $g_n$ be an $n\times n$ diagram as in the right of Figure \ref{fig:specialC1}.
Suppose that $g_n$ has exactly one $O^*$-marking and $2n-2$ $X$-markings and that there is no marking in the lower left $(n-1)\times(n-1)$ block.
Although $g_n$ is not a graph grid diagram, we can define a chain complex in the same manner as a graph grid diagram because $g_n$ has at least one $X$-marking in each row and column.

We define $\mathbb{O}^*,\mathbb{X},\mathbf{S}(g_n),\mathrm{Rect}^\circ$, and Maslov grading in the same manner as general grid homology in Section \ref{sec:dfn-of-grid-homology}.
We remark that $\#\mathbb{O}^*=1$ and $\#\mathbb{X}=2n-2$.
\begin{dfn}
\label{dfn:Cn}
The chain complex $C_n$ is an $\mathbb{F}$-vector space finitely generated by $\mathbf{S}(g_n)$, and whose differential is defined by
\[
\partial_n(\mathbf{x})=\sum_{\mathbf{y}\in\mathbf{S}(g)}\#\{r\in\mathrm{Rect}^\circ(\mathbf{x,y})|r\cap\mathbb{O}^*=r\cap\mathbb{X}=\emptyset\}\cdot\mathbf{y},
\]
where $\#\{\cdot\}$ counts rectangles modulo $2$.
\end{dfn}
By the definition of Maslov grading, $\partial_n$ drops a Maslov grading by one.

\begin{prop}
Let $C_n$ be a chain complex of Definition \ref{dfn:Cn}.
Then we have $H(C_n)=0$ for any $n\geqq2$. 
\end{prop}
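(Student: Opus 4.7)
The plan is to adapt the mapping-cone argument of the proof of Theorem \ref{thm:cutedge} (1) to the chain complex $C_n$, using the local marking structure at the inner corner of the L-shape of markings on $g_n$.

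Concretely, let $c$ be the intersection point at the inner corner of the L-shape, that is, the corner of the empty $(n-1)\times(n-1)$ block closest to both the top row and the rightmost column. Of the four cells adjacent to $c$, exactly one lies in the empty block and the other three lie in the L-shape and therefore carry an $X$-marking (or, in the relevant position, the unique $O^*$-marking). Split $\mathbf{S}(g_n) = \mathbf{I}(g_n) \sqcup \mathbf{N}(g_n)$ according to whether $c \in \mathbf{x}$ and write $C_n \cong \widetilde{I} \oplus \widetilde{N}$ as an $\mathbb{F}$-vector space. Any empty rectangle participating in the differential and using $c$ as a corner is forced to lie entirely in the empty block, so only one of the two off-diagonal blocks of $\partial_n$ with respect to this splitting is nonzero. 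This exhibits $C_n$ as a mapping cone of an explicit chain map $\varphi$ between $(\widetilde{I}, \partial_I^I)$ and $(\widetilde{N}, \partial_N^N)$.

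It then suffices to show that $\varphi$ is a quasi-isomorphism. I would construct an explicit chain homotopy inverse $\mathcal{H}$ of $\varphi$ together with a chain homotopy $\mathcal{H}'$, in the spirit of the maps $\mathcal{H}$ and $\mathcal{H}_N$ appearing in the proof of Theorem \ref{thm:cutedge} (1). The maps $\mathcal{H}$ and $\mathcal{H}'$ will count empty rectangles passing through the unique empty cell adjacent to $c$, which provides the ``inverse'' to the rectangles counted by $\varphi$. The identities
\[
\varphi \circ \mathcal{H} = \mathrm{id}, \qquad \mathcal{H} \circ \varphi + \partial \circ \mathcal{H}' + \mathcal{H}' \circ \partial = \mathrm{id}
\]
are then verified by the same kind of rectangle bookkeeping used in the proof of Theorem \ref{thm:cutedge} (1): each composite rectangle on the left-hand side is either uniquely determined (contributing to the identity) or cancels with a twin coming from an overlapping-rectangle pair.

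The main obstacle will be the rectangle bookkeeping in the two homotopy identities. In particular, one must verify that any thin annulus through the row or column of $c$ is blocked by an $X$- or $O^*$-marking in that row or column, and that the emptiness of the lower-left $(n-1)\times(n-1)$ block supplies precisely one ``inverse'' rectangle for each composition that should land in the identity. The base case $n=2$ should also be checked by direct enumeration, since there the L-shape degenerates to only three marked cells and the cone decomposition above is somewhat degenerate.
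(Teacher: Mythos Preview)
Your mapping-cone decomposition is correct: with $c$ the inner corner of the L-shape, the only marking-free rectangles having $c$ as a corner are those with $c$ as their \emph{upper-right} corner, lying in the empty $(n-1)\times(n-1)$ block. Since the upper-right corner is an initial-state corner, these rectangles send $\widetilde{I}$ to $\widetilde{N}$; so $\partial_N^I=0$, $\widetilde{N}$ is the subcomplex, and $C_n=\mathrm{Cone}(\varphi\colon\widetilde{I}\to\widetilde{N})$. Note this is the \emph{opposite} direction from the cone in Theorem~\ref{thm:cutedge}(1), where $\widetilde{I}$ is the subcomplex.

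The gap is in your construction of $\mathcal{H}$. A homotopy inverse must go $\widetilde{N}\to\widetilde{I}$, i.e.\ it must count rectangles in which $c$ is a \emph{terminal} corner (upper-left or lower-right). But every such rectangle necessarily contains the adjacent $X$-marking $X_R$ (if $c$ is upper-left) or $X_L$ (if $c$ is lower-right); the rectangles you describe --- those through the empty cell lower-left of $c$ --- all have $c$ as upper-right corner and hence go $\widetilde{I}\to\widetilde{N}$, the same direction as $\varphi$ itself, so they cannot serve as an inverse. Moreover, because the entire top row and the entire rightmost column carry markings, any rectangle with $c$ as a terminal corner that contains only a \emph{single} marking is forced to be the $1\times 1$ cell $X_L$ or $X_R$. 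Such a map is nonzero only on states containing two very specific points, so $\varphi\circ\mathcal{H}$ is far from the identity and no obvious $\mathcal{H}'$ repairs this. The analogy with Theorem~\ref{thm:cutedge}(1) breaks down precisely because there the special $O^*$-marking $O_0$ has an empty row (or column), which gives the rectangles counted by $\mathcal{H}$ room to extend; here the L-shape is saturated and there is no such slack.

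For comparison, the paper does not use a single mapping cone at $c$ at all. It argues by induction on $n$: filtering by which of the $n$ points $c_1,\dots,c_n$ on $\beta_1$ lies in the state, the fact that the rightmost column is full of markings forces the differential to be nonincreasing in this filtration, and each associated graded piece is visibly isomorphic to $C_{n-1}$. Together with the base case $n=2$ and Lemma~\ref{lem:C/C'}, this gives $H(C_n)=0$.
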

\begin{proof}
We will prove by induction on $n$.

Let $n=2$, the chain complex $C_2$ is generated by two states $\mathbf{x}_{12}$ and $\mathbf{x}_{21}$, and we have
\[
\partial(\mathbf{x}_{12})=\mathbf{x}_{21},\ \partial(\mathbf{x}_{21})=0.
\]
So $H(C_2)=0$.

For the inductive step, we will divide $C_n$ into many subcomplexes isomorphic to $C_{n-1}$.
Take $n$ points $c_1,\dots,c_n$ on the vertical circle $\beta_1$ as in Figure \ref{fig:specialC1}.
For $i=1,\dots,n$, let $C_n(i)$ be the span of the set of states containing $c_i$.
Then we have $C_n=C_n(1)\oplus\dots\oplus C_n(n)$ as a vector space.

Let $r\in\mathrm{Rect}^\circ(\mathbf{x,y})$ be an empty rectangle and $c_j=\mathbf{x}\cap\beta_1$ and $c_k=\mathbf{y}\cap\beta_1$.
Then we have $j \geqq k$ because the rightmost column is filled by the markings.
\[
C_n(n) \subset C_n(n) \oplus C_n(n-1) \subset \dots \subset C_n(n)\oplus\dots\oplus C_n(1)=C_n.
\]
This sequence gives a sequence of chain complexes $C_n(n), C_n(n-1), \dots, C_n(1)$, where $C_n(i)$ is the quotient complex of $C_n(i) \oplus\dots\oplus C_n(n)$ by $C_n(i+1) \oplus\dots\oplus C_n(n)$ for $i=n-1,\dots,1$.

Obviously, we have $C_n(i)\cong C_{n-1}$ and $H(C_n(i))=0$ for $i=1,\dots,n$.
Applying Lemma \ref{lem:C/C'} verifies that $H(C_n)=0$.
\end{proof}

\section{The proof of theorem \ref{thm:cutedge} (2)}
Let $(f,\omega)$ be an MOY graph such that $f$ has a cut edge as a spatial graph (Definition \ref{dfn:cutedge}).
If at least one of two vertices connected by the cut edge is sink or source, then Theorem \ref{thm:cutedge} (1) shows that $\widehat{HF}(f,\omega)=0$.
Then suppose that the two vertices connected by the cut edge are neither sinks nor sources.

\begin{lem} 
\label{lem:C/C'}
Let $C$ be a chain complex and $C'$ be its subcomplex.
\begin{itemize}
    \item If $H(C')=0$, then $H(C)\cong H(C/C')$.
    \item If $H(C/C')=0$, then $H(C)\cong H(C')$.
\end{itemize}
\end{lem}
\begin{proof}
It follows immediately from the long exact sequence from the following short exact sequence:
\[
\xymatrix{
0\ar[r] & C' \ar[r] & C \ar[r] & C/C' \ar[r]  & 0.
}
\]
\end{proof}

For simplicity, we often use the following graph grid diagrams.
\begin{dfn}
\label{dfn:good-graph-diagram}
An $n \times n$ graph grid diagram is \textbf{good} if it satisfies the following two conditions:
\begin{itemize}
    \item The leftmost square of the top row and the rightmost square of the bottom row has an $O$- or $O^*$-marking,
    \item The rightmost square of the top row has an $X$-marking,
\end{itemize}
\end{dfn}

\subsection{The structure of the grid chain complex}
\label{sec:str-of-cpx}
Let $(f,\omega)$ be an MOY graph such that $f$ has a cut edge as a spatial graph and that the two vertices connected by it are neither sinks nor sources.
Then there exists a $2n\times 2n$ graph grid diagram $g$ for $f$.
Consider four $n \times n$ blocks obtained by cutting $g$ along the horizontal circles $\alpha_1\cup\alpha_{n+1}$ and the vertical circles $\beta_1\cup\beta_{n+1}$.
We will call the four blocks $g_{11}$, $g_{12}$, $g_{21}$, and $g_{22}$ respectively (see Figure \ref{fig:g_withcutedge}).
We can assume that $g$ satisfies the following conditions:
\begin{itemize}
\item $g_{11}$ and $g_{22}$ can be viewed as good graph grid diagrams.
\item The rightmost square of the bottom row of $g_{11}$ has an $O^*$-marking.
\item $g_{12}$ has no $O$- or $O^*$-markings and only one $X$-marking in the leftmost square of the bottom row.
\item $g_{21}$ has no markings.
\item The leftmost square of the top row of $g_{22}$ has an $O^*$-marking.
\end{itemize}

\begin{figure}
\centering
\includegraphics[scale=0.4]{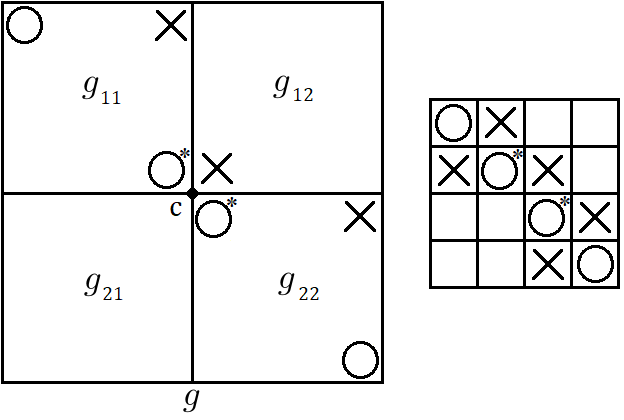}
\caption{Left: A $2n\times2n$ graph grid diagram. Right: an example of $2n\times2n$ diagram for a spatial handcuff graph.}
\label{fig:g_withcutedge}
\end{figure}

Take a point $c$ as the intersection point $\alpha_{n+1}\cap\beta_{n+1}$.
Using the same notations as the proof of Theorem \ref{thm:cutedge} (1), we can write $\widetilde{CF}(g,\omega)$ as a mapping cone $\mathrm{Cone}(\partial_I^N\colon (\widetilde{I},\partial_I)\to(\widetilde{N},\partial_N))$.

For $k=1,\dots,n$, let $\mathbf{S}(g_{11},k)$ be a $k$-tuple of points of $(\alpha_{n+1}\cup\dots\cup\alpha_{2n})\cap(\beta_1\cup\dots\cup\beta_n)$ so that each horizontal and vertical circle has at most one point.
We define $\mathbf{S}(g_{12},k)$, $\mathbf{S}(g_{21},k)$, and $\mathbf{S}(g_{22},k)$ in the same way.
Then we will represent each state $\mathbf{x\in S}(g)$ uniquely as
\[
\mathbf{x}=
\begin{pmatrix}
\mathbf{x}_{11} & \mathbf{x}_{12} \\
\mathbf{x}_{21} & \mathbf{x}_{22} \\
\end{pmatrix}
,
\]
where $\mathbf{x}_{11}\in\mathbf{S}(g_{11},n-k)$, $\mathbf{x}_{12}\in\mathbf{S}(g_{12},k)$, $\mathbf{x}_{21}\in\mathbf{S}(g_{21},k)$, and $\mathbf{x}_{22}\in\mathbf{S}(g_{22},n-k)$.
Using this representation, we decompose the set of grid states $\mathbf{S}(g)$ as disjoint union $\mathbf{S}_0(g)\cup\dots\cup\mathbf{S}_n(g)$, where $\mathbf{S}_k(g)$ is the set of states represented by $\mathbf{S}(g_{11},n-k)$, $\mathbf{S}(g_{12},k)$, $\in\mathbf{S}(g_{21},k)$, and $\mathbf{S}(g_{22},n-k)$.

Now we give the splitting of the vector space 
\[
\widetilde{CF}(g,\omega) \cong \left(\bigoplus_{k=1}^n I_k \right) \oplus \left(\bigoplus_{k=0}^n N_k \right),
\]
where $I_k$ is the span of $\mathbf{S}_k(g)\cap\mathbf{I}(g)$, and $N_k$ is the span of $\mathbf{S}_k(g)\cap\mathbf{N}(g)$.
We remark that $\mathbf{S}_0(g)\cap\mathbf{I}(g)=\emptyset$.

The differential of $\widetilde{CF}(g,\omega)$, denoted by $\widetilde{\partial}$, satisfies that for $k=1,\dots,n$,
\begin{align*}
\widetilde{\partial}(I_k) &\subset I_{k-1}\oplus I_{k} \oplus N_{k-1},\\
\widetilde{\partial}(N_k) &\subset N_{k-1}\oplus N_{k}.
\end{align*}
This relation can be expressed by the following schematic picture:
\[
\xymatrix{
I_1 \ar[d] & I_2 \ar[d] \ar[l] & \cdots\ar[l] & I_n \ar[d] \ar[l] & {}\\
N_0  & N_1 \ar[l] & \cdots\ar[l] &  N_{n-1}\ar[l]  & N_{n}\ar[l]
}
\]
The top row of the picture represents the chain complex $(\widetilde{I},\partial_I)$, and the bottom row represents the chain complex $(\widetilde{N},\partial_N)$.

\subsection{The main idea of the proof}
\label{sec:proof-of-main-thm}
The main idea of the proof is to see that the following chain complexes are acyclic:
\begin{itemize}
\item $\mathcal{C}=\mathrm{Cone}(\partial_I^N|_{I_1}\colon I_1\to N_0)$, which is the subcomplex of $\widetilde{CF}(g,\omega)$,
\item $N_k$, which is the subcomplex of $\widetilde{CF}(g,\omega)/(\mathcal{C}\oplus N_1\oplus\cdots\oplus N_{k-1})$ for $k=1,\dots,n$,
\item $I_k$, which is the subcomplex of $\widetilde{CF}(g,\omega)/(\widetilde{N}\oplus I_{2}\oplus\dots\oplus I_{k-1})$ for $k=2,\dots,n-1$,
\item $I_n$, which is the quotient complex $\widetilde{CF}(g,\omega)/(\widetilde{N}\oplus I_{2}\oplus\dots\oplus I_{n-1})$.
\end{itemize}
Then Lemma \ref{lem:C/C'} verifies Theorem \ref{thm:cutedge} (2).
To see these, we will observe that these chain complexes are decomposed into finitely many acyclic chain complexes defined in Section \ref{sec:specialcpx}.

\begin{lem}
\label{lem:C-is-acyclic}
$\mathcal{C}$ is acyclic.
\end{lem}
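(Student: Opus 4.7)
The plan is to prove that the chain map $\partial_I^N|_{I_1}\colon I_1 \to N_0$ is actually an isomorphism of the underlying $\mathbb{F}$-vector spaces; once this is established, acyclicity of $\mathcal{C}$ follows at once, with an explicit null-homotopy $h(\mathbf{a},\mathbf{b}) = ((\partial_I^N|_{I_1})^{-1}\mathbf{b},\, 0)$ on $\mathcal{C} = I_1 \oplus N_0$.

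First I would pin down the empty rectangles contributing to $\partial_I^N|_{I_1}$. A state $\mathbf{x}\in I_1$ contains $c = \alpha_{n+1}\cap\beta_{n+1}$ and a unique point $p = \alpha_i\cap\beta_j$ in $g_{21}$ with $1\le i,j\le n$, while any $\mathbf{y}\in N_0$ contains neither $c$ nor any point in $g_{21}$. Since an empty rectangle moves exactly two points of the state, the two moved points must be $c$ and $p$; they appear as a pair of opposite corners of the rectangle, forcing the remaining two corners to be $(\alpha_{n+1},\beta_j)\in g_{11}$ and $(\alpha_i,\beta_{n+1})\in g_{22}$. On the torus there are four rectangular regions with this corner set, but only the innermost one, $R_{\mathrm{in}} = [\alpha_i,\alpha_{n+1}]\times[\beta_j,\beta_{n+1}]$, lies entirely in $g_{21}$. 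The other three regions each swallow at least one of the three markings clustered around $c$ (the $O^*$ in the bottom-right square of $g_{11}$, the $O^*$ in the top-left square of $g_{22}$, or the $X$ in the bottom-left square of $g_{12}$), so they are forbidden. The rectangle $R_{\mathrm{in}}$ itself is empty: $g_{21}$ has no markings, the only candidate $\mathbf{x}$-point in $g_{21}$ is the corner $p$, and $\mathbf{y}\in N_0$ has no $g_{21}$-points at all.

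Hence $\partial_I^N|_{I_1}(\mathbf{x})$ is a single basis vector $\mathbf{y}(\mathbf{x})$, obtained from $\mathbf{x}$ by exchanging $\{c,p\}$ for $\{(\alpha_{n+1},\beta_j),(\alpha_i,\beta_{n+1})\}$. Next I would verify that $\mathbf{x}\mapsto\mathbf{y}(\mathbf{x})$ is a bijection $I_1\leftrightarrow N_0$ by writing down the inverse: for $\mathbf{y}\in N_0$, its unique point on the row $\alpha_{n+1}$ lies in $g_{11}$ at some $(\alpha_{n+1},\beta_j)$ with $j\le n$, and its unique point on the column $\beta_{n+1}$ lies in $g_{22}$ at some $(\alpha_i,\beta_{n+1})$ with $i\le n$; the preimage is then recovered by swapping these two points for $c$ and $(\alpha_i,\beta_j)$. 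A dimension check $|I_1| = n^2((n-1)!)^2 = (n!)^2 = |N_0|$ corroborates bijectivity. Thus $\partial_I^N|_{I_1}$ is an $\mathbb{F}$-linear isomorphism, and since it is a chain map (forced by $\widetilde{\partial}^2=0$), the cone $\mathcal{C}$ is acyclic.

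The step requiring care is the uniqueness of $R_{\mathrm{in}}$, i.e.\ ruling out each of the other three torus rectangles with corners $c$, $p$, $(\alpha_{n+1},\beta_j)$, $(\alpha_i,\beta_{n+1})$. This is where the specific placement of the markings adjacent to $c$ is essential: the two $O^*$-markings in $g_{11}$ and $g_{22}$, together with the $X$-marking in $g_{12}$, always fall inside the three non-inner regions regardless of where $p$ sits in $g_{21}$.
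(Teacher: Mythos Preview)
Your proof is correct and follows the same approach as the paper: both arguments show that $\partial_I^N|_{I_1}$ is a chain isomorphism by identifying the unique contributing rectangle (the one lying in $g_{21}$ with opposite corners $c$ and $\mathbf{x}_{21}$), from which acyclicity of the cone is immediate. Your treatment is more detailed than the paper's---explicitly ruling out the other torus regions via the three markings adjacent to $c$, writing down the inverse bijection and a dimension count, and exhibiting the null-homotopy---whereas the paper simply asserts these points; a minor remark is that only two of your four rectangular regions carry the correct orientation to be rectangles \emph{from} $\mathbf{x}$ \emph{to} $\mathbf{y}$, so the elimination step is slightly shorter than you indicate.
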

\begin{proof}
Since any state $\mathbf{x}\in\widetilde{I}$ satisfies $\widetilde{\partial}\circ \widetilde{\partial}(\mathbf{x})=(\partial_I \circ \partial_I +\partial_I^N \circ \partial_I + \partial_N \circ \partial_I^N)(\mathbf{x})=0$, we have $\partial_I^N\circ \partial_I+\partial_N\circ\partial_I^N=0$ and thus $\partial_I^N$ is a chain map.

For any state $\mathbf{x}$ of $\widetilde{I}$, the chain map $\partial_I^N|_{I_1}$ only counts the rectangle that has $c$ and $\mathbf{x}_{21}$ as its corners.
So $\partial_I^N|_{I_1}$ is a bijection and thus an isomorphism.

Since $\partial_I^N|_{I_1}$ is an isomorphism of chain complexes, the usual argument of mapping cone deduces that $H(\mathcal{C})=0$.
\end{proof}

To observe $I_k$ and $N_k$, we will introduce the following.
\begin{dfn}
For a state $\mathbf{x}=\begin{pmatrix}
\mathbf{x}_{11} & \mathbf{x}_{12} \\
\mathbf{x}_{21} & \mathbf{x}_{22} \\
\end{pmatrix}$ of $N_k$ or $I_k$ $(k=2,\dots,n)$, the \textbf{modified Maslov grading} $M'$ for $\mathbf{x}$ is defined by $M'(\mathbf{x})=M(\mathbf{x}_{11})+M(\mathbf{x}_{22})$.
\end{dfn}

\begin{lem}
\label{lem:M'}
The differentials of $N_2,\dots, N_n$ and $I_2,\dots, I_n$ preserve or drop the modified Maslov grading.
Moreover, the modified Maslov grading is preserved if and only if the differential does not change $\mathbf{x}_{11}$ or $\mathbf{x}_{22}$.
\end{lem}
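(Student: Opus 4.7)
The plan is to enumerate the empty rectangles contributing to the differential of $N_k$ or $I_k$ (viewed as subcomplexes of the respective quotients set up in Section~\ref{sec:proof-of-main-thm}), classify them by how they intersect the four blocks $g_{11},g_{12},g_{21},g_{22}$, and verify the two claims case by case. A size count on the four-part decomposition shows that any rectangle preserving the stratum $\mathbf{S}_k(g)$ must lie entirely inside one block or straddle exactly one of the dividing circles $\alpha_{n+1}$, $\beta_{n+1}$; rectangles straddling both circles alter $|\mathbf{x}_{12}|$, hence go between distinct $\mathbf{S}_k(g)$-strata, and are killed in the quotients defining $N_k$ and $I_k$.

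A rectangle lying entirely in $g_{12}$ or $g_{21}$ leaves both $\mathbf{x}_{11}$ and $\mathbf{x}_{22}$ unchanged, so $M'$ is trivially preserved. A rectangle lying entirely in $g_{11}$ (respectively $g_{22}$) is an empty rectangle in the sub-diagram containing no $\mathbb{O}$-markings, and the standard Maslov-change identity $M(\mathbf{x}_{11})-M(\mathbf{y}_{11})=1$ applies: its $\mathcal{J}$-expansion uses only emptiness and is insensitive to $\mathbf{x}_{11}$ being a partial rather than full state, and the other block is untouched, so $M'$ drops by exactly one. In these cases, the second clause of the lemma is immediate.

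The substantial case is when $r$ straddles one of $\alpha_{n+1},\beta_{n+1}$; by reflection-symmetry it suffices to treat $r$ straddling $\alpha_{n+1}$ with both vertical sides in the left half. Then the $\mathbf{x}$-corners are $(c_1,r_2)\in g_{21}$ and $(c_2,r_1)\in g_{11}$ with $c_1<c_2$ and with $r_2$, $r_1$ in the bottom and top halves respectively; the move replaces $(c_2,r_1)\in\mathbf{x}_{11}$ by $(c_1,r_1)$, while $\mathbf{x}_{22}$ is fixed. Expanding $M(\mathbf{x}_{11})-M(\mathbf{y}_{11})$ directly from the definition of $\mathcal{J}$ collapses to the difference $O^{+}_{>}-P^{+}_{>}$, where $O^{+}_{>}$ counts $\mathbb{O}_{11}$-markings with $y$-coordinate above $r_1$ and $x$-coordinate in the strip $(c_1,c_2)$, and $P^{+}_{>}$ counts $\mathbf{x}_{11}$-points of the same type.

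The emptiness of $r$ together with $r\cap\mathbb{O}=\emptyset$ force the corresponding below-$r_1$ counts inside $g_{11}$ to vanish, so each of the $c_2-c_1$ half-integer column centers in the strip hosts its (unique) $\mathbb{O}_{11}$-marking above $r_1$, yielding $O^{+}_{>}=c_2-c_1$; on the other hand, the strip contains only $c_2-c_1-1$ integer columns, each admitting at most one $\mathbf{x}_{11}$-point, so $P^{+}_{>}\le c_2-c_1-1$. Therefore $M(\mathbf{x}_{11})-M(\mathbf{y}_{11})\ge 1$, i.e.\ $M'$ strictly drops, and the second clause holds automatically here since $\mathbf{x}_{11}\neq\mathbf{y}_{11}$. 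The only real obstacle is this straddling case: the key combinatorial input is the off-by-one between half-integer column centers (each carrying a marking) and integer columns (each carrying at most one state point) in the strip, which is what forces the strict drop in $M$.
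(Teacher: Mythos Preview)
Your approach is the same as the paper's: case-split on which blocks the rectangle meets, and for the boundary-crossing case compare the number of $O$-markings in the strip to the number of state-points, exploiting the off-by-one between half-integer columns (each carrying an $O$) and integer columns (each carrying at most one state-point). Your $\mathcal{J}$-expansion for $M(\mathbf{x}_{11})-M(\mathbf{y}_{11})$ is correct and more explicit than the paper's sketch.

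There is one omission. On the torus, a rectangle with one $\mathbf{x}$-corner in $g_{11}$ and the other in $g_{21}$ (both vertical sides in the left half) can cross either $\alpha_{n+1}$ \emph{or} $\alpha_{1}$; your hypothesis $c_1<c_2$ together with the placement of the $\mathbf{x}$-corners pins down only the first. The paper explicitly separates these two sub-cases (``Suppose that $r\cap\alpha_{n+1}\neq\emptyset$\ldots The case that $r\cap\alpha_1\neq\emptyset$ can be shown similarly''), and your phrase ``straddles one of $\alpha_{n+1},\beta_{n+1}$'' does not cover the wrap-around through $\alpha_1$ or $\beta_1$. The fix is immediate: in the $\alpha_1$ case the portion of the strip \emph{above} $r_1$ inside $g_{11}$ lies in the rectangle, so the roles of ``above'' and ``below'' swap and your identity becomes $M(\mathbf{x}_{11})-M(\mathbf{y}_{11})=O^{-}_{<}-P^{-}_{<}$, with the same off-by-one bound giving $\ge 1$. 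You should state this, since ``reflection-symmetry'' alone does not produce it (the diagram is not symmetric under swapping $\alpha_{n+1}\leftrightarrow\alpha_1$).
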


\begin{proof}

Let $r$ be an empty rectangle counted by the differential.
Then $r$ does not change $\mathbf{x}_{11}$ and $\mathbf{x}_{22}$ simultaneously.
If both $\mathbf{x}_{11}$ and $\mathbf{x}_{22}$ are preserved, then the modified Maslov grading is preserved.

Suppose that $r$ changes $\mathbf{x}_{11}$ and preserve $\mathbf{x}_{22}$.
Then there are three cases of $r$:

\begin{enumerate}[(i)]
\item $r$ changes only $\mathbf{x}_{11}$.
It is clear that the modified Maslov grading drops.

\item $r$ changes only $\mathbf{x}_{11}$ and $\mathbf{x}_{21}$.
Suppose that $r \cap \alpha_{n+1} \neq \emptyset$.
Let $x_1$ and $x_2$ be the intersection points at the northeast and northwest corners of $r$ respectively.
Let $\beta_{i}$ be the vertical circle containing $x_2$, and $\beta_{i+k}$ be the vertical circle containing $x_1$.
Then $\mathcal{J}(\mathbf{x}_{11},\mathbb{O})$ drops by $k$ because there are $k$ $O$-markings above the segment connecting $x_1$ and $x_2$.
On the other hand $\mathcal{J}(\mathbf{x}_{11},\mathbf{x}_{11})$ drops at most $k-1$ because there are at most $k-1$ points above the segment.
Therefore $M'$ drops.
The cases that $r \cap \alpha_{1} \neq \emptyset$ can be shown similarly.

\item $r$ changes only $\mathbf{x}_{11}$ and $\mathbf{x}_{12}$.
The same argument shows that $M'$ drops.
\end{enumerate}

The case that $r$ changes $\mathbf{x}_{22}$ and preserve $\mathbf{x}_{11}$ is the same.
\end{proof}

\begin{lem}
\label{lem:I2-Nn-is-acyclic}
$N_k$ and $I_k$ are acyclic for $k=2,\dots, n$.
\end{lem}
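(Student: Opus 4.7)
The plan is to extend the decomposition used in the proof of Lemma \ref{lem:N1-is-acyclic} to all $k \geq 2$, combining \emph{both} families of acyclic chain complexes constructed in Section 4. For $k \geq 2$, a state of $N_k$ has $k$ points in $g_{12}$ and $k$ points in $g_{21}$ instead of one each, so in addition to the cut-edge interaction captured by the complexes $N_a(e), N_b(e), I_a(d), I_b(d)$ of Section \ref{sec:special chain complex1}, one must also track empty rectangles whose four corners lie inside a single block $g_{12}$ or $g_{21}$. These internal rectangles are precisely the ones that the chain complexes $C_m$ of Section \ref{sec:special chain complex2} were designed to model, since $g_{21}$ is marking-free and $g_{12}$ carries a single $X$-marking in its bottom-left corner; both blocks fit the shape of the diagrams $g_n$ of Section \ref{sec:special chain complex2} after relabeling.

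The first step is to introduce a finite filtration on $N_k$ recording the ``interface data'' of a state: namely, which of the $k$ points of $\mathbf{x}_{12}$ lie on the circle $\beta_{n+1}$ and which of the $k$ points of $\mathbf{x}_{21}$ lie on $\alpha_{n+1}$. On each associated graded piece, the differential splits as a tensor product
\[
A_a \otimes_{\mathbb{F}} A_b \otimes_{\mathbb{F}} C_{m_1} \otimes_{\mathbb{F}} C_{m_2},
\]
where $A_a \in \{N_a(e), I_a(d)\}$ and $A_b \in \{N_b(e), I_b(d)\}$ are determined by the interface data, and $m_1, m_2 \geq 2$ count the points of $\mathbf{x}_{12}$ and $\mathbf{x}_{21}$ that lie off the distinguished circles. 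The isomorphism is constructed in the same manner as the maps $f_{1p}, f_{1q}, f_{1r}$ used in the proof of Lemma \ref{lem:N1-is-acyclic}, now permitting multiple points per block; the two $C_{m_i}$ factors absorb the new rectangles internal to $g_{12}$ and $g_{21}$.

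Each of the four factors on the right is acyclic: $A_a$ and $A_b$ by Lemma \ref{lem:Ne} together with the argument in the proof of Lemma \ref{lem:N1-is-acyclic}, and $C_{m_1}, C_{m_2}$ by the acyclicity result of Section \ref{sec:special chain complex2}. Since $\mathbb{F}$ is a field, the K\"{u}nneth formula makes the tensor product acyclic, so each associated graded piece has vanishing homology; applying Lemma \ref{lem:C/C'} iteratively along the filtration then yields $H(N_k)=0$. The case of $I_k$ is handled by the same argument, with the additional constraint that the point $c$ is forced into the state, which simply fixes the factors $A_a$ and $A_b$ but leaves the four-fold tensor-product structure intact. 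The main obstacle is verifying that the differential really respects this decomposition: one must check that no empty rectangle in $g$ couples the $C_{m_1}$ part with the $C_{m_2}$ part, and that any rectangle crossing the interface contributes to $A_a$ or $A_b$ rather than escaping the filtration. This is precisely where the structural hypotheses on the blocks are used: with no markings in $g_{21}$ and only a single corner marking in $g_{12}$, every rectangle whose interior meets both $g_{12}$ and $g_{21}$ must also have corners in $g_{11}$ or $g_{22}$, and such rectangles are absorbed by the $A_a$ and $A_b$ factors exactly as in Lemma \ref{lem:N1-is-acyclic}.
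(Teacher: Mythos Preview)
Your approach has a genuine gap: the complexes $N_a(e)$, $I_a(d)$, $N_b(e)$, $I_b(d)$ of Section~\ref{sec:special chain complex1} are built from $(n+1)\times(n+1)$ diagrams whose states carry exactly $n-1$ points in the good $n\times n$ block together with \emph{one} extra point on the added row or column. They are tailored to $k=1$, where $\mathbf{x}_{11}\in\mathbf{S}(g_{11},n-1)$. For $k\geq 2$ a state of $N_k$ has only $n-k$ points in $g_{11}$ and $k$ points in each of $g_{12},g_{21}$, so there is no natural map landing in $A_a\otimes A_b$. Your proposed four-fold tensor product also fails at the level of differentials: a rectangle $r$ that moves one point of $\mathbf{x}_{11}$ together with one of the $k$ points of $\mathbf{x}_{12}$ (case~(iii) in Lemma~\ref{lem:M'}) couples the ``$A_a$ part'' with the ``$C_{m_1}$ part'' in a way that no interface filtration on $\beta_{n+1}$ and $\alpha_{n+1}$ alone can separate, because such a rectangle need not touch either of those two circles.

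The paper takes a different route that avoids Section~\ref{sec:special chain complex1} entirely for $k\geq 2$. It introduces a \emph{modified Maslov grading} $M'(\mathbf{x})=M(\mathbf{x}_{11})+M(\mathbf{x}_{22})$ and proves (Lemma~\ref{lem:M'}) that every rectangle counted by the differential of $N_k$ or $I_k$ either preserves $M'$ or strictly decreases it, with equality precisely when $\mathbf{x}_{11}$ and $\mathbf{x}_{22}$ are unchanged. Filtering by $M'$ thus freezes $\mathbf{x}_{11}$ and $\mathbf{x}_{22}$ on each associated graded piece, leaving only the $k$ points of $\mathbf{x}_{12}$ and the $k$ points of $\mathbf{x}_{21}$ to vary. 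That residual complex is then identified with $C_k\otimes C_k$ (respectively $C_{k-1}\otimes C_k$ for $I_k$), using only Section~\ref{sec:special chain complex2}. The key step you are missing is this Maslov-type filtration that decouples the good blocks from the off-diagonal blocks; once you have it, there is no need to invoke the $A_a,A_b$ factors at all.
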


\begin{proof}
We will show that $N_2, \dots, N_n$ are acyclic.
$I_2,\dots,I_n$ can be shown in the same way.
Let $m=\mathrm{min}\{ M'(\mathbf{x}) | \mathbf{x} \in \mathbf{S}_k(g) \cap \mathbf{N}(g) \}$ and $M=\mathrm{max}\{ M'(\mathbf{x}) | \mathbf{x} \in \mathbf{S}_k(g) \cap \mathbf{N}(g) \}$.
We obtain the splitting of the vector space $N_k=\bigoplus_{i=m}^M N_{k}^i$, where $N_k^i$ is the span of the grid states whose the modified Maslov grading is $i$.

According to Lemma \ref{lem:M'}, we have a sequence of subcomplexes,
\[
N_k^m \subset (N_k^{m} \oplus N_k^{m+1}) \subset (N_k^{m} \oplus N_k^{m+1} \oplus N_k^{m+2}) \subset \dots \subset \bigoplus_{i=m}^M N_{k}^i=N_k.
\]
This sequence deduces a sequence of chain complexes ${N}_k^m,\dots,N_k^M$, where $N_k^i$ is the quotient complex of $(N_k^{m} \oplus\dots\oplus N_k^i)$ by $(N_k^{m} \oplus\dots\oplus N_k^{i-1})$ for $i=m+1, \dots, M$.

Lemma \ref{lem:C/C'} implies that it is sufficient to see that the homology of $N_k^i$ vanishes for $i=m, \dots, M$.

For $i=m,\dots M$, let $\mathbf{S}_k(g|M'=i)$ be the set of pairs $\{(\mathbf{x}_{11}, \mathbf{x}_{22})\in \mathbf{S}(g_{11},n-k) \times \mathbf{S}(g_{22},n-k)|M(\mathbf{x}_{11})+M(\mathbf{x}_{22})=i\}$.

We have the decomposition of the vector space
\begin{align}
\label{eq:N_k^i}
N_k^i=\bigoplus_{(\mathbf{y}_{11}, \mathbf{y}_{22})\in \mathbf{S}_k(g|M'=i)} N(\mathbf{y}_{11}, \mathbf{y}_{22}),
\end{align}

where $N(\mathbf{y}_{11}, \mathbf{y}_{22})$ is the span of the set of states 
$\begin{pmatrix}
\mathbf{x}_{11} & \mathbf{x}_{12} \\
\mathbf{x}_{21} & \mathbf{x}_{22} \\
\end{pmatrix}\in \mathbf{S}_k(g)\cap\mathbf{N}(g)$ with $\mathbf{x}_{11}=\mathbf{y}_{11}$ and $\mathbf{x}_{22}=\mathbf{y}_{22}$.
Again using Lemma \ref{lem:M'}, we can regard (\ref{eq:N_k^i}) as a decomposition of the chain complex.
Clearly each summand $N(\mathbf{y}_{11}, \mathbf{y}_{22})$ is isomorphic to $C_k\otimes C(\mathbf{y}_{11}, \mathbf{y}_{22})$, where $C_k$ is the special chain complex in Section \ref{sec:specialcpx} and $C(\mathbf{y}_{11}, \mathbf{y}_{22})$ is some chain complex.
The points of $g_{21}$ are corresponding to $C_k$ and $g_{12}$ to $C(\mathbf{y}_{11}, \mathbf{y}_{22})$.
Therefore we have $H(N(\mathbf{y}_{11}, \mathbf{y}_{22}))=0$ and hence $H(N_k^i)=0$.
Lemma \ref{lem:C/C'} shows that $H(N_k)=0$.

The same argument shows that $I_k$ is decomposed into many copies of $C_{k-1}\otimes C_k$, and $H(I_k)=0$ follows.
We remark that $I_2$ is decomposed into many copies of $C_2$.
\end{proof}

\begin{lem}
\label{lem:N1-is-acyclic}
$N_1$ is acyclic.
\end{lem}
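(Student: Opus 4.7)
The plan is to exhibit $N_1$ as an iterated extension of acyclic chain complexes already constructed in Sections \ref{sec:special chain complex1} and \ref{sec:special chain complex2}, and then invoke Lemma \ref{lem:C/C'} to conclude $H(N_1)=0$. A state contributing to $N_1$ has exactly one point in the block $g_{21}$ and exactly one point in $g_{12}$, and does not contain the distinguished intersection $c$. Following Figure \ref{fig:N_1}, I first stratify $N_1$ by the position of the point in $g_{21}$: the three qualitatively distinct positions labelled $p$, $q$, $r$ yield subspaces $N_1(p)$, $N_1(q)$, $N_1(r)$. An analysis of which empty rectangles can move this point (using the fact that $g_{21}$ has no markings, while $g_{12}$ contains exactly one $X$-marking that blocks certain rectangles) shows that this stratification refines to a filtration of $N_1$ by subcomplexes whose associated graded is $N_1(p)\oplus N_1(q)\oplus N_1(r)$.

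The main step is to identify the subcomplexes $N_1(p)$ and $N_1(q)$ with the complexes $N_a(e)$ and $N_b(e)$ from Section \ref{sec:special chain complex1}. Concretely, I will build chain isomorphisms $f_{1p}\colon N_1(p) \to N_a(e)$ and $f_{1q}\colon N_1(q) \to N_b(e)$ by the natural bijection of state sets obtained from collapsing the row and column through the point at $p$ (resp.\ $q$) and reassembling the remaining $2n-1$ points as an $(n+1)$-point state on $g_a$ (resp.\ $g_b$) that avoids both distinguished points, see Figure \ref{fig:N_1tog_ag_b}. The verification that these bijections intertwine $\widetilde{\partial}$ amounts to matching the empty rectangles on $g$ contributing to the differential on $N_1(p)$ (resp.\ $N_1(q)$) with the empty rectangles on $g_a$ (resp.\ $g_b$) avoiding $\mathbb{O}\cup\mathbb{X}$, as illustrated by Figure \ref{fig:N_1tog_ag_b2}. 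Lemma \ref{lem:Ne} then gives $H(N_1(p))=H(N_1(q))=0$. For the third stratum $N_1(r)$, the point sits in a position for which neither $f_{1p}$ nor $f_{1q}$ applies; here I expect $N_1(r)$ either to split as a tensor product of two sub-diagrams whose differential reduces to the $\partial_n$ of Definition \ref{dfn:Cn}, giving acyclicity via the vanishing $H(C_n)=0$, or to be seen directly acyclic by a reduced variant of the $N_a(e)$ argument.

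The main obstacle is the combinatorial bookkeeping underlying the identifications $N_1(p)\cong N_a(e)$ and $N_1(q)\cong N_b(e)$. One must check that the row-and-column collapse really lands in the subset of states of $g_a$ (resp.\ $g_b$) that avoid \emph{both} $c$ and $d$, and that the rectangle correspondence is exact, with no contribution lost or duplicated either by the torus-wrapping convention or by the lone $X$-marking in $g_{12}$; degenerate rectangles straddling the boundary circles $\alpha_{n+1}$ and $\beta_{n+1}$ are the most delicate case. Once the three strata are shown acyclic, iterated use of Lemma \ref{lem:C/C'} along the filtration built in the first paragraph yields $H(N_1)=0$.
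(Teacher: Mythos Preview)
Your filtration idea is correct, but the identifications of the associated graded pieces are off in a way that cannot be fixed by bookkeeping. A state in $N_1$ is a $2n$-tuple on the $2n\times 2n$ diagram $g$, whereas a state in $N_a(e)$ is an $(n+1)$-tuple on the $(n+1)\times(n+1)$ diagram $g_a$. Your proposed ``collapse'' from $2n$ points to $n+1$ points is not a bijection: after removing the one row and column you describe, you are left with $2n-1$ points, not $n+1$. There is no chain isomorphism $N_1(p)\cong N_a(e)$.

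What actually happens is that each stratum is a \emph{tensor product} of two complexes, one coming from each half of the diagram. In the paper's proof, the splitting is according to the position of the point $\mathbf{x}_{12}$ in $g_{12}$ (not $g_{21}$), and the three strata are identified as
\[
N_{1p}\cong N_a(e)\otimes_{\mathbb F} N_b(e),\qquad
N_{1q}\cong I_a(d)\otimes_{\mathbb F} N_b(e),\qquad
N_{1r}\cong N_a(e)\otimes_{\mathbb F} I_b(d).
\]
The map $f_{1p}$ sends $\mathbf{x}$ to $\mathbf{x}_a\otimes\mathbf{x}_b$, where $\mathbf{x}_a$ records $\mathbf{x}_{11}$ together with the horizontal/vertical coordinates carried by $\mathbf{x}_{12}$ and $\mathbf{x}_{21}$, and $\mathbf{x}_b$ does the symmetric thing with $\mathbf{x}_{22}$. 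The rectangle count then factors as $\partial_{N_a}\otimes\mathrm{id}+\mathrm{id}\otimes\partial_{N_b}$, since any empty rectangle counted by the differential of $N_{1p}$ meets the interior of exactly one of $g_{11}$, $g_{22}$. Acyclicity of all three strata then follows immediately from Lemma~\ref{lem:Ne}, since each tensor product has at least one factor $N_a(e)$ or $N_b(e)$; the special complexes $C_n$ of Section~\ref{sec:special chain complex2} are not used here at all. Your third stratum $N_1(r)$ is not a leftover requiring a separate ad hoc argument: it falls to the same tensor-product mechanism as the first two.
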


\begin{proof}
The main idea is the same as the previous lemma.
Let $\mathbf{x}=
\begin{pmatrix}
\mathbf{x}_{11} & \mathbf{x}_{12} \\
\mathbf{x}_{21} & \mathbf{x}_{22} \\
\end{pmatrix}$ be a state of $\mathbf{S}_1(g)\cap\mathbf{I}(g)$.
\begin{figure}
\centering
\includegraphics[scale=0.5]{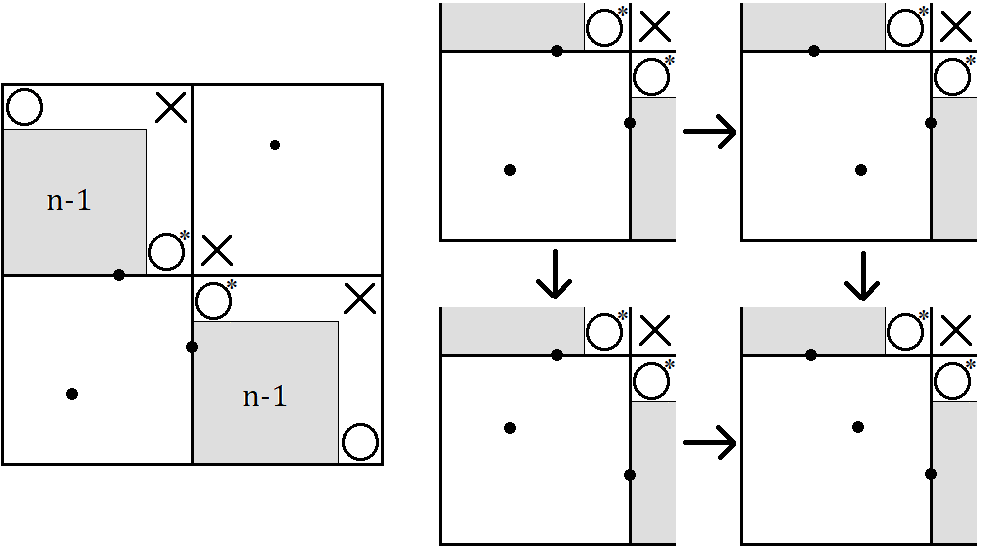}
\caption{Left: A state of case (1). Right: an acyclic complex consisting of four states. The arrows represent the differential.}
\label{fig:N_1-case1}
\end{figure}
\begin{figure}
\centering
\includegraphics[scale=0.5]{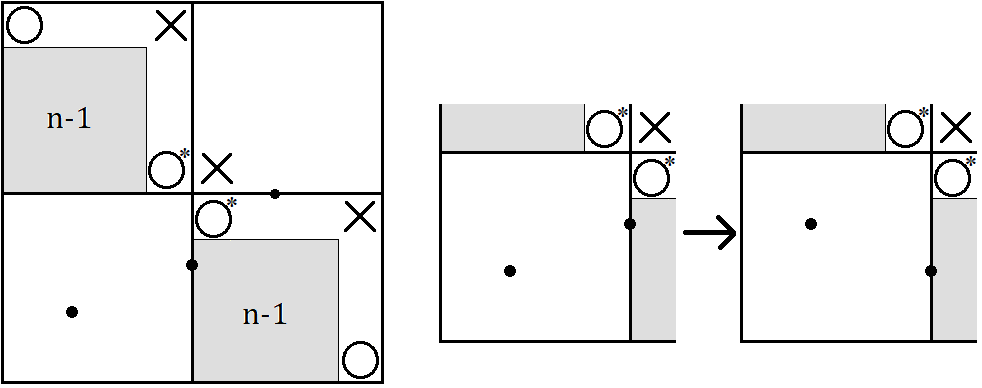}
\caption{Left: A state of case (2). Right: an acyclic complex consisting of two states. The arrow represents the differential.}
\label{fig:N_1-case2}
\end{figure}
Then we have the following three cases;
\begin{enumerate}[(1)]
    \item The point $\mathbf{x}_{12}$ is not on $\alpha_{n+1}$ or $\beta_{n+1}$.
    The collection of states satisfying this, say $N_1'$, forms a subcomplex of $N_1$. 
    Then let $x'=\mathbf{x}_{11}\cap\alpha_{n+1}$ and $x''=\mathbf{x}_{22}\cap\beta_{n+1}$ and define $M'(\mathbf{x})=M((\mathbf{x}_{11}\setminus\{x'\})\cup\mathbf{x}_{12}\cup(\mathbf{x}_{22}\setminus\{x''\}))$.
    Then $M'$ satisfies the same property as Lemma \ref{lem:M'}, in other words, $M'$ is preserved if and only if the differential does not change $\mathbf{x}_{11}\setminus\{x'\}$, $\mathbf{x}_{12}$, or $\mathbf{x}_{22}\setminus\{x''\}$.
    Then consider the four states such that all of $\mathbf{x}_{11}\setminus\{x'\}$, $\mathbf{x}_{12}$, $\mathbf{x}_{22}\setminus\{x''\}$ coincide and that the values of $M'$ are the minimum.
    These four states form the subcomplex of $N_1$ (Figure \ref{fig:N_1-case1}).
    Figure 4 directly shows this type of complex is acyclic.
    The same argument as Lemma \ref{lem:I2-Nn-is-acyclic} shows that $N_1'$ is acyclic. 
    \item The point $\mathbf{x}_{12}$ is on the horizontal line $\alpha_{n+1}$.
    Then let $x'=\mathbf{x}_{22}\cap\beta_{n+1}$ and define $M'(\mathbf{x})=M(\mathbf{x}_{11}\cup\mathbf{x}_{12}\cup\mathbf{x}_{22}\setminus\{x'\})$.
    Then the same argument as the previous case shows that the collection of the states of this case is acyclic.
    In this case, we have acyclic complexes consisting of two states (Figure \ref{fig:N_1-case2}).
    \item The point $\mathbf{x}_{12}$ is on the vertical line $\beta_{n+1}$.
    Then let $x'=\mathbf{x}_{11}\cap\alpha_{n+1}$ and define $M'(\mathbf{x})=M((\mathbf{x}_{11}\setminus\{x'\})\cup\mathbf{x}_{12}\cup\mathbf{x}_{22})$.
    Then the same argument as the case (2) concludes the proof.
\end{enumerate}
\end{proof}

\section{The proof of Corollary \ref{cor:wedge sum}}
For two MOY graphs $(f_1,\omega_1)$ and $(f_2,\omega_2)$, let $(f\vee_{(v_1,v_2)} f_2,\omega_1\vee\omega_2)$ be the spatial graph (Definition \ref{dfn: disjoint-connected-wedge-sum}) and $f$ be the transverse spatial graph consisting of $f_1 \sqcup f_2$ and a cut edge from $v_1$ to $v_2$.
Take a balanced coloring $\omega$ for $f$ naturally determined by $\omega_1$ and $\omega_2$.

Let $g$ be a $2n\times 2n$ graph grid diagram for $f$ and $g_{\vee}$ be an $(2n-1)\times (2n-1)$ graph grid diagram for $f\vee_{(v_1,v_2)} f_2$ as in Figure \ref{fig:g-gvee}.
\begin{figure}
\centering
\includegraphics[scale=0.4]{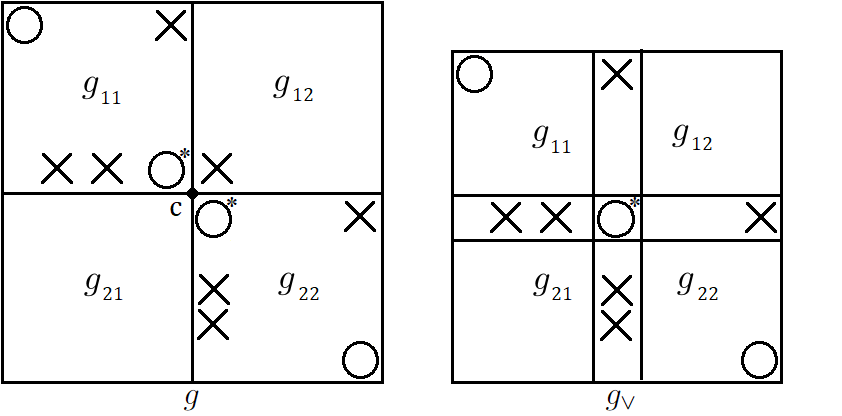}
\caption{Two graph grid diagrams $g$ and $g_{\vee}$}
\label{fig:g-gvee}
\end{figure}
Let $g_{ij}$ $(i,j\in\{1,2\})$ be four $n\times n$ blocks of $g$ as in Figure \ref{fig:g-gvee}. 
We can assume the following conditions:
\begin{itemize}
    \item $g_{11}$ and $g_{22}$ can be viewed as good graph grid diagrams,
    \item $g_{11}$ and the upper left $n \times n$ block of $g_{\vee}$ are the same,
    \item $g_{22}$ and the lower right $n \times n$ block of $g_{\vee}$ are the same,
    \item $g_{12}$ has no $O$- or $O^*$-marking and exactly one $X$-marking in its the leftmost square of the bottom row,
    \item $g_{21}$ has no markings,
    \item The upper right and the lower left $(n-1) \times (n-1)$ blocks of $g_{\vee}$ have no markings.
\end{itemize}
Since $g$ represents a transverse spatial graph with a cut edge, Section \ref{sec:str-of-cpx} implies that the chain complex $\widetilde{CF}(g,\omega)$ can be written as $\mathrm{Cone}(\partial_I^N\colon \widetilde{I}\to\widetilde{N})$.

The following Lemma will be used sometimes:
\begin{lem}
\label{lem:N0-CFxCF}
Let $g$ be a $2n \times 2n$ graph grid diagram and $g_1$, $g_2$ be two $n \times n$ graph grid diagram.
Suppose that they satisfy the following conditions:
\begin{itemize}
    \item $g_1$ and $g_2$ are good graph diagrams,
    \item $g_1$ and the upper left $n \times n$ block of $g$ are the same,
    \item $g_2$ and the lower right $n \times n$ block of $g$ are the same,
    \item The lower left $n \times n$ block has no markings.
\end{itemize}
Let $\omega$ be a weight for $g$ and $\omega_i$ be a weight for $g_i$ given by the restriction of $\omega$.
Then for $N_0$, which is a subcomplex of $\widetilde{CF}(g,\omega)$ (Section \ref{sec:str-of-cpx}), there is an isomorphism
\[
N_0 \cong \widetilde{CF}(g_1,\omega_1) \otimes \widetilde{CF}(g_2,\omega_2)\llbracket-1,0\rrbracket.
\]
\end{lem}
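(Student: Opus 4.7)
The plan is to build an explicit chain-level map $\Phi\colon N_0 \to \widetilde{CF}(g_1)\otimes\widetilde{CF}(g_2)\llbracket-1,0\rrbracket$ sending $\mathbf{x}\in\mathbf{S}_0(g)$ to $\mathbf{x}_{11}\otimes\mathbf{x}_{22}$, then to verify (i) bijection of generators, (ii) compatibility with the differentials, and (iii) the grading shift. The generator step is immediate: $\mathbf{S}_0(g)$ consists of states whose $g_{12}$ and $g_{21}$ parts are empty, so each $\mathbf{x}$ splits canonically as $\mathbf{x}_{11}\in\mathbf{S}(g_1)$ and $\mathbf{x}_{22}\in\mathbf{S}(g_2)$; moreover $c=\alpha_{n+1}\cap\beta_{n+1}$ lies in $g_{21}$, so $\mathbf{S}_0(g)\subseteq\mathbf{N}(g)$ and $N_0$ is genuinely spanned by these states.

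For the differential I first observe that an empty rectangle between two states of $\mathbf{S}_0$ cannot have its four corners split across blocks, since that would leave a corner of $\mathbf{y}$ in $g_{12}$ or $g_{21}$; hence all four corners lie in $g_{11}$ or all four in $g_{22}$. I then enumerate, for the $g_{11}$ case, the four possible rectangles on the torus of $g$: small, horizontally wrapping through $g_{12}$, vertically wrapping through $g_{21}$, and fully wrapping through $g_{22}$. The fully wrapping one is excluded because its interior covers every row and column of $g$ and hence contains all points of $\mathbf{x}_{22}$. The vertical wrap carries no obstruction because $g_{21}$ has no markings and no state points, so it is empty in $g$ iff the corresponding rectangle is empty in $g_1$. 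The horizontal wrap is the delicate case: the wrap strip meets the unique marking $X_{\mathrm{cut}}$ of $g_{12}$ precisely when it includes the bottom row of $g_{11}$, and in that event the corresponding wrap in $g_1$ must also contain the bottom-right $O^{*}$-marking of $g_1$, which lies in the same row and in a column always swept by any horizontal wrap. Thus the two obstructions match term-for-term, giving a bijection of counted rectangles; a symmetric argument handles the $g_{22}$ case, so $\Phi$ intertwines $\widetilde{\partial}|_{N_0}$ with $\widetilde{\partial}_{g_1}\otimes 1 + 1\otimes\widetilde{\partial}_{g_2}$.

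For the bigrading, I use bilinearity of $\mathcal{J}$ together with $\mathbb{O}_g=\mathbb{O}_{g_{11}}\sqcup\mathbb{O}_{g_{22}}$ (since $g_{12}$ has no $O$-marking and $g_{21}$ has no markings at all) to decompose $M(\mathbf{x})$ and $A(\mathbf{x})$ into contributions from $g_{11}$, from $g_{22}$, and cross terms. Working in a planar realization where $g_{11}$ and $g_{22}$ occupy diagonally opposite quadrants of the plane, no pair of points from the two blocks is strictly comparable in both coordinates, so the Maslov cross term collapses to a constant independent of the state and reproduces precisely the shift recorded by $\llbracket-1,0\rrbracket$. For the Alexander grading the only extra contribution is the pairing of $\mathbf{x}_{11}$ and $\mathbf{x}_{22}$ against $\omega(X_{\mathrm{cut}})X_{\mathrm{cut}}$, and since every state in $\mathbf{S}_0$ meets the row and column of $X_{\mathrm{cut}}$ in exactly one point, this contribution is state-independent and disappears from relative Alexander differences.

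The main obstacle is the horizontal-wrap accounting in step (ii): one needs a careful case check to confirm that the blocking of wraps in $g$ by $X_{\mathrm{cut}}$ coincides exactly with the blocking of the corresponding wraps in $g_1$ by its bottom-right $O^{*}$, and symmetrically on the $g_{22}$ side. Once that bijection of counted rectangles is pinned down, the remaining grading computations are routine bilinear algebra applied to the definitions of $M$ and $A$.
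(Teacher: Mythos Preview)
Your approach coincides with the paper's: define $\Phi(\mathbf{x})=\mathbf{x}_{11}\otimes\mathbf{x}_{22}$, check it is a bijection on generators, a chain map, and has the stated bigrading shift. The paper dispatches the differential step in a single sentence (``every empty rectangle from $\mathbf{x}$ does not change $\mathbf{x}_{11}$ and $\mathbf{x}_{22}$ simultaneously''), whereas you attempt a genuine case analysis. That extra care is worthwhile, but two points in your analysis need attention.

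First, your horizontal-wrap discussion invokes a single marking $X_{\mathrm{cut}}$ sitting in the bottom-left square of $g_{12}$. That is the particular configuration of Section~\ref{sec:str-of-cpx}, but the lemma as stated places no hypothesis on the upper-right block at all, and it is later applied (Section~8) to a diagram where the upper-right block carries no markings. So your argument proves a special case rather than the lemma. The repair is easy in the paper's applications: either $g_{12}$ is empty, or its only $X$ lies in the same row as the bottom-right $O$/$O^*$ of $g_1$ guaranteed by ``good'', and then your matching-obstruction argument goes through verbatim. But as written you have imported an assumption that is not in the hypotheses.

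Second, and more substantively, you argue that the doubly-wrapping rectangle on $g$ is never counted because it contains the points of $\mathbf{x}_{22}$. That is correct for $g$, but it is only half of what you need: for $\Phi$ to be a chain map you must also know that the \emph{corresponding} doubly-wrapping rectangle on the $n\times n$ torus $g_1$ is never counted by $\widetilde\partial_{g_1}$. Otherwise $\widetilde{CF}(g_1)$ has differential terms with no counterpart in $N_0$, and your ``bijection of counted rectangles'' leaks in one direction. This is exactly where the \emph{good} hypothesis earns its keep: a rectangle on $g_1$ that wraps both horizontally and vertically necessarily contains the top-left, top-right, and bottom-right corner squares of $g_1$, all of which are marked by Definition~\ref{dfn:good-graph-diagram}, so it is never counted. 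The symmetric statement holds for $g_2$. You should state this explicitly; without it the argument is incomplete.

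One cosmetic slip: with the paper's indexing conventions the point $c=\alpha_{n+1}\cap\beta_{n+1}$ belongs to the lattice set of $g_{12}$, not $g_{21}$. This does not affect the substance, since either way $c\notin\mathbf{x}$ for $\mathbf{x}\in\mathbf{S}_0(g)$.
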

\begin{proof}
Let $\mathbf{x}=\begin{pmatrix}
\mathbf{x}_{11} & \mathbf{x}_{12} \\
\mathbf{x}_{21} & \mathbf{x}_{22} \\
\end{pmatrix}$ be a state of $N_0$.
Since the upper left and lower right $n\times n$ blocks are good graph diagrams, every empty rectangle from $\mathbf{x}$ does not change $\mathbf{x}_{11}$ and $\mathbf{x}_{22}$ simultaneously.
So the natural correspondence $\mathbf{x}\to \mathbf{x}_{11} \otimes \mathbf{x}_{22}$ induces an isomorphism $N_0\cong \widetilde{CF}(g_1,\omega_1)\otimes \widetilde{CF}(g_2,\omega_2)\llbracket-1,0\rrbracket$.
The direct computation shows that this isomorphism increases the Maslov grading by one and preserves the Alexander grading.
\end{proof}

\begin{proof}[proof of Corollary \ref{cor:wedge sum}]
Since $g$ represents a transverse spatial graph with a cut edge, Lemmas \ref{lem:C-is-acyclic}-\ref{lem:I2-Nn-is-acyclic} and \ref{lem:N0-CFxCF} implies that $H(\widetilde{I})\cong H(I_1)\cong H(N_0)\llbracket1,0\rrbracket\cong \widetilde{HF}(g_1,\omega_1) \otimes \widetilde{HF}(g_2,\omega_2)$.

For a state $\mathbf{x}\cup\{c\}$ of $\widetilde{I}$, let $\phi\colon \widetilde{I}\to \widetilde{CF}(g_{\vee}, \omega_1\vee\omega_2)$ be a linear map defined by $\phi(\mathbf{x}\cup\{c\})=\mathbf{x}$.
The same argument as \cite[Lemma 5.2.5]{grid-book} shows that $\phi$ is an isomorphism of absolute Maslov graded, relative Alexander graded chain complexes.
Therefore we have 
\[
\widetilde{HF}(g_{\vee}, \omega_1\vee\omega_2)\cong \widetilde{HF}(g_1,\omega_1) \otimes \widetilde{HF}(g_2,\omega_2).
\]
Proposition \ref{prop:tilde-hat} gives 
\[
\widehat{HF}(g_{\vee}, \omega_1\vee\omega_2)\cong \widehat{HF}(g_1,\omega_1) \otimes \widehat{HF}(g_2,\omega_2).
\]
\end{proof}

\section{The proof of Theorem \ref{cor:connected-sum-knot}}
We will show the case that $L_1$ and $L_2$ are knots.
The same argument holds for the case of two links.

Let $(f_1,\omega_1)$ and $(f_2,\omega_2)$ be two MOY graph.
Suppose that there is a pair $(v_1,v_2)\in V(f_1)\times V(f_2)$ such that $\omega_1(v_1)=\omega_2(v_2)$.
Let $f$ be a transverse spatial graph consisting of $f_1 \sqcup f_2$ and a cut edge from $v_1$ to $v_2$.
Take a balanced coloring $\omega$ for $f$ naturally determined by $\omega_1$ and $\omega_2$.
\begin{figure}
\centering
\includegraphics[scale=0.4]{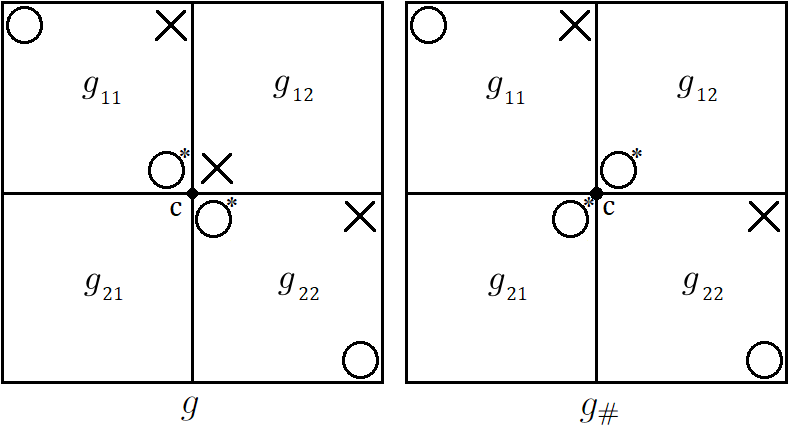}
\caption{Graph grid diagrams for $f$ and $f_1\#_{(v_1,v_2)} f_2$}
\label{fig:cut-connected}
\end{figure}
Then we can take two $2n \times 2n$ graph grid diagrams $g$ and $g_{\#}$ for $f$ and $f_1\#_{(v_1,v_2)}f_2$ respectively as in Figure \ref{fig:cut-connected}.
Let $c$ be the intersection point $\alpha_{n+1}\cap\beta_{n+1}$ on $g$ and $g_{\#}$.
We can assume that $g$ and $g_{\#}$ coincide except for the $2\times 2$ block around $c$.

We decompose the set of states as $\mathbf{S}(g_{\#})=\mathbf{I}(g_{\#})\sqcup\mathbf{N}(g_{\#})$, where $\mathbf{I}(g_{\#})$ is the set of states containing $c$.
Using the spans of them, we obtain the splitting of the vector space $\widetilde{CF}(g_{\#})\cong \widetilde{N}_{\#} \oplus \widetilde{I}_{\#}$.
Then we can write the chain complex of $g_{\#}$ as $\widetilde{CF}(g_{\#})=\mathrm{Cone}(\partial_N^I \colon \widetilde{N}_{\#} \to \widetilde{I}_{\#})$, where $\partial_N^I$ is the chain map counting empty rectangles from a state of $\mathbf{N}(g_{\#})$ to a state of $\mathbf{I}(g_{\#})$.

Since $g$ represents a transverse spatial graph with a cut edge, Section \ref{sec:str-of-cpx} implies that its chain complex is written as $\widetilde{CF}(g)=\mathrm{Cone}(\partial_I^N \colon \widetilde{I} \to \widetilde{N})$.
Recall that $\widetilde{I}$ has a subcomplex $I_0$ and $\widetilde{N}$ has a subcomplex $N_1$ such that $H(\widetilde{I})\cong H(I_0)$ and $H(\widetilde{N})\cong H(N_0)$.

\begin{lem}
\label{lem:Ihash-Nhash}
As absolute Maslov graded, relative Alexander graded chain complexes, there are natural isomorphisms $\widetilde{I}\cong \widetilde{I}_{\#}\llbracket 1,\omega_1(v_1)\rrbracket$ and $\widetilde{N}\cong \widetilde{N}_{\#}\llbracket -1,0\rrbracket$.
\end{lem}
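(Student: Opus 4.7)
The plan is to realise both isomorphisms via the identity on $2n$-tuples of intersection points, compute the resulting grading shifts directly from the formulas for $M$ and $A$, and verify that the identification intertwines the two differentials through an empty-rectangle comparison.

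Since $g$ and $g_{\#}$ live on the same $(2n\times 2n)$ grid and differ only in the markings inside the $2\times 2$ block around $c=\alpha_{n+1}\cap\beta_{n+1}$, the sets of states coincide as $\mathbf{S}(g)=\mathbf{S}(g_{\#})$, and this identity restricts to bijections $\mathbf{I}(g)\leftrightarrow\mathbf{I}(g_{\#})$ and $\mathbf{N}(g)\leftrightarrow\mathbf{N}(g_{\#})$ since containment of $c$ is marking-independent. This induces the required $\mathbb{F}$-linear isomorphisms $\phi_I\colon\widetilde{I}\to\widetilde{I}_{\#}$ and $\phi_N\colon\widetilde{N}\to\widetilde{N}_{\#}$.

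For the grading shifts I would plug states into
\[
M(\mathbf{x})=\mathcal{J}(\mathbf{x}-\mathbb{O},\mathbf{x}-\mathbb{O})+1,\qquad A(\mathbf{x})=\mathcal{J}\!\left(\mathbf{x},\sum_{j}\omega(X_j)\cdot X_j-\sum_{i}\omega(O_i)\cdot O_i\right)
\]
and take differences. The contributions outside the $2\times 2$ block cancel, so $M_{g_{\#}}-M_g$ and $A_{g_{\#}}-A_g$ reduce to a bookkeeping sum over the altered squares, where the cut-edge $O^{*}$-markings for $v_1,v_2$ (of weight $\omega_1(v_1)=\omega_2(v_2)$) are replaced by the connected-sum marking pattern. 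In the $\widetilde{I}$-case, $c\in\mathbf{x}$ forces every other state-point off $\alpha_{n+1}\cup\beta_{n+1}$, pinning down the relevant $\mathcal{J}$-contributions and yielding the shift $(1,\omega_1(v_1))$; the analogous calculation with $c\notin\mathbf{x}$ produces the shift $(-1,0)$ for $\widetilde{N}$.

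Finally I would check that $\phi_I$ and $\phi_N$ are chain maps. The identity on states sends each $r\in\mathrm{Rect}^{\circ}(\mathbf{x},\mathbf{y})$ on $g$ to the geometrically-same rectangle on $g_{\#}$, so the only possible discrepancy is whether $r$ meets an altered marking. A short case analysis on which squares of the $2\times 2$ block $r$ touches---using, in the $\widetilde{I}$-case, that $c\in\mathbf{x}\cap\mathbf{y}$ is a pinned corner which forbids $r$ from using the corner at $c$, and in the $\widetilde{N}$-case, that the row and column through $c$ carry state-points coming from $\mathbf{x}_{12},\mathbf{x}_{21}$ that analogously restrict $r$---shows that the cut-edge and connected-sum patterns forbid (and allow) exactly the same rectangles. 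The main obstacle is this final enumeration: it is not obvious a priori that two genuinely different marking configurations should impose identical rectangle-exclusions, and verifying it rests on exploiting the state-occupancy constraint at $c$ to rule out the handful of rectangles that would otherwise distinguish the two patterns.
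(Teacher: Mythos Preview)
Your approach is essentially the paper's: the identity on states gives the bijections, the grading shifts come from a direct $\mathcal{J}$-computation localised to the $2\times 2$ block, and the chain-map property follows from a rectangle comparison. The one simplification the paper makes is in the last step: rather than a case-by-case enumeration, it observes in one stroke that \emph{every} empty rectangle counted by any of the four differentials $\partial_I,\partial_N,\partial_{I_\#},\partial_{N_\#}$ is disjoint from the interior of the $2\times 2$ block around $c$ (the marked squares there exclude any rectangle that enters, and the remaining square has $c$ as a corner, which is ruled out by the $c\in\mathbf{x}\cap\mathbf{y}$ or $c\notin\mathbf{x}\cup\mathbf{y}$ hypothesis), so the two marking patterns are never seen by the relevant rectangles and the identity is automatically a chain map.
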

\begin{proof}
Natural bijections $\mathbf{I}(g)\to\mathbf{I}(g_{\#})$ and $\mathbf{N}(g)\to\mathbf{N}(g_{\#})$ give isomorphisms because for any empty rectangle $r$ counted by the differential of $\widetilde{I}$, $\widetilde{N}$, $\widetilde{I}_{\#}$, and $\widetilde{N}_{\#}$, $r$ is disjoint from the interior of $2\times 2$ block around $c$.

A simple computation shows that the bijection $\mathbf{I}(g)\to\mathbf{I}(g_{\#})$ drops the Maslov grading by one and the Alexander grading by $\omega(v_1)$, and the bijection $\mathbf{N}(g)\to\mathbf{N}(g_{\#})$ increase the Maslov grading by one and preserve the Alexander gradings.
\end{proof}

\begin{lem}
\label{lem:partial-hash-trivial}
The induced map on homology $H(\partial_{\#})\colon H(\widetilde{N}_{\#}) \to H(\widetilde{I}_{\#})$ is trivial.
\end{lem}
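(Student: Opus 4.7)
The plan is to exploit the acyclicity of $\widetilde{CF}(g)$ established in Theorem~\ref{thm:cutedge}~(2) together with the chain-level identifications in Lemma~\ref{lem:Ihash-Nhash} to exhibit $\partial_{\#}$ as the zero map on homology. The key intermediate ingredient will be a combinatorial identity $\partial_{\#} \circ \partial_I^N = 0$ at the chain level.

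First, because $\widetilde{CF}(g) = \mathrm{Cone}(\partial_I^N \colon \widetilde{I} \to \widetilde{N})$ has vanishing homology, the chain map $\partial_I^N$ is a quasi-isomorphism. In particular, every cycle $\mathbf{x} \in \widetilde{N}$ admits a chain-level decomposition $\mathbf{x} = \partial_I^N(\mathbf{w}) + \partial_N(\mathbf{w}')$ with $\mathbf{w} \in \widetilde{I}$ satisfying $\partial_I(\mathbf{w}) = 0$ and some $\mathbf{w}' \in \widetilde{N}$. Next, Lemma~\ref{lem:Ihash-Nhash} identifies $\widetilde{I}$ with $\widetilde{I}_{\#}$ and $\widetilde{N}$ with $\widetilde{N}_{\#}$ as chain complexes (up to grading shifts), and under this identification $\partial_I = \partial_{I_{\#}}$ and $\partial_N = \partial_{N_{\#}}$, since rectangles disjoint from the $2 \times 2$ block around $c$ are counted identically on $g$ and $g_{\#}$.

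The main combinatorial step is to verify $\partial_{\#} \circ \partial_I^N = 0$ by the usual pairing of composite domains. Each pair $(r_1, r_2)$, where $r_1$ is an empty rectangle on $g$ avoiding the markings of $g$ (contributing to $\partial_I^N$) and $r_2$ is an empty rectangle on $g_{\#}$ avoiding the markings of $g_{\#}$ (contributing to $\partial_{\#}$), is matched with another pair with the same composite domain and opposite decomposition, so that their contributions cancel modulo~$2$. This adapts the standard $\partial^2 = 0$ argument to the mixed setting where the two rectangles avoid different sets of markings: the complementary configurations in the $2\times 2$ block around $c$ on $g$ and on $g_{\#}$ are precisely what guarantees that both decompositions remain valid simultaneously.

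Granting this identity, the conclusion follows in one line. For a cycle $\mathbf{x} \in \widetilde{N}_{\#}$, the decomposition from the first paragraph gives
\[
\partial_{\#}(\mathbf{x}) = \partial_{\#} \partial_I^N(\mathbf{w}) + \partial_{\#} \partial_{N_{\#}}(\mathbf{w}') = \partial_{I_{\#}}\bigl(\partial_{\#}(\mathbf{w}')\bigr),
\]
using $\partial_{\#} \partial_I^N = 0$ and the chain-map identity $\partial_{\#} \partial_{N_{\#}} = \partial_{I_{\#}} \partial_{\#}$. Thus $\partial_{\#}(\mathbf{x})$ is a boundary in $\widetilde{I}_{\#}$, so $H(\partial_{\#}) = 0$.

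The main obstacle is the combinatorial identity $\partial_{\#} \circ \partial_I^N = 0$. Unlike the standard $\partial^2 = 0$ proof, the two rectangles being composed live on different grid diagrams with different markings in the central $2 \times 2$ block, so one must carefully verify that each composite domain admits exactly two valid decompositions with $r_1$ on $g$ and $r_2$ on $g_{\#}$, in particular that the alternative decomposition's intermediate state produces rectangles that avoid the correct set of markings on each diagram.
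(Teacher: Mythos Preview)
Your overall logical skeleton is sound, but the argument hinges entirely on the chain-level identity $\partial_{\#}\circ\partial_I^N=0$, and that identity is neither proved nor clearly true. Invoking ``the standard $\partial^2=0$ argument'' here is not enough: in that argument the two rectangles in a composite avoid the \emph{same} set of markings, so the alternative decomposition of a composite domain is automatically admissible. In your situation $r_1$ avoids the markings of $g$ while $r_2$ avoids those of $g_{\#}$, and these differ precisely in the $2\times2$ block around $c$. For the pairing to go through you would have to check, for every composite domain, that the alternative decomposition $(r_1',r_2')$ again has $r_1'$ disjoint from the $g$-markings and $r_2'$ disjoint from the $g_{\#}$-markings, \emph{and} that the new intermediate state still lies in $\widetilde N$ (i.e., does not contain $c$). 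Since both $r_1$ and $r_2$ are forced to have $c$ as a corner, the alternative decomposition typically passes through a state containing $c$, which breaks the required factoring through $\widetilde N$. Your final paragraph acknowledges this obstacle but does not resolve it.

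The paper's proof avoids this cross-diagram combinatorics entirely. It uses the structural results already established for the cut-edge diagram (Lemmas~\ref{lem:C-is-acyclic}--\ref{lem:I2-Nn-is-acyclic}) to say that $H(\widetilde N)\cong H(N_0)$; under the identification of Lemma~\ref{lem:Ihash-Nhash} this gives $H(\widetilde N_{\#})\cong H(N_{\#0})$, where $N_{\#0}\subset\widetilde N_{\#}$ is the subcomplex corresponding to $N_0$. So it suffices to show $\partial_{\#}$ vanishes on $N_{\#0}$ at the chain level, and this is a one-line observation: for $\mathbf{y}\in N_{\#0}$, any empty rectangle on $g_{\#}$ from $\mathbf{y}$ to a state containing $c$ must pass through the central $2\times2$ block, and in $g_{\#}$ that block is arranged so that every such rectangle contains one of the $X$-markings there. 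Thus the representative cycles themselves are annihilated by $\partial_{\#}$, and $H(\partial_{\#})=0$ follows. This is both shorter and more robust than the route through $\partial_{\#}\circ\partial_I^N$.
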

\begin{proof}
Using the isomorphism $\widetilde{N} \cong \widetilde{N}_{\#}\llbracket -1,0\rrbracket$, let $N_{\# 0}$ be a subcomplex of $\widetilde{N}_{\#}$ which is isomorphic to $N_0$.
Then we have $H(\widetilde{N}_{\#})\cong H(N_{\# 0})$ since $H(\widetilde{N})\cong H(N_{0})$.
To show this Lemma, it is sufficient to see that $\partial_{\#}(N_{\#0})=0$.

Let $r$ be a rectangle from a state of $N_0$ to a state of $\widetilde{I}$.
Then $r$ must contain one of $X$-marking drawn in the right of Figure \ref{fig:cut-connected}.
Therefore we have $\partial_{\#}(N_{\#0})=0$.
\end{proof}

\begin{proof}[proof of Theorem \ref{thm:connected-sum}]
Lemma \ref{lem:partial-hash-trivial} implies 
\[
\widetilde{HF}(f_1\#_{(v_1,v_2)}f_2,\omega_1\#\omega_2) \cong H(\widetilde{I}_{\#}) \oplus H(\widetilde{N}_{\#}).
\]

Using Lemmas \ref{lem:C-is-acyclic}-\ref{lem:I2-Nn-is-acyclic}, \ref{lem:N0-CFxCF}, and \ref{lem:Ihash-Nhash}, we have 
\[\widetilde{HF}(f_1\#_{(v_1,v_2)}f_2,\omega_1\#\omega_2) \cong \widetilde{HF}(g_1,\omega_1)\otimes \widetilde{HF}(g_2,\omega_2) \otimes W(\omega_1(v_1)).
\]
Then Proposition \ref{prop:tilde-hat} gives 
\[
\widehat{HF}(f_1\#_{(v_1,v_2)}f_2,\omega_1\#\omega_2) \cong \widehat{HF}(g_1,\omega_1)\otimes \widehat{HF}(g_2,\omega_2) \otimes W(\omega_1(v_1)).
\]
\end{proof}

\begin{proof}[proof of Corollary \ref{cor:connected-sum-knot}]
We will regard a knot as a transverse spatial graph consisting of one vertex and edge.
If we think of a balanced coloring for a knot that sends the only edge to one, our grid homology $\widehat{HF}$ coincides with the original grid homology $\widehat{GH}$, and thus with knot Floer homology $\widehat{HFK}$ up to shift of the Alexander grading.
Since the Alexander grading of $\widehat{GH}$ and $\widehat{HFK}$ only depends on the knot type, it is sufficient to prove the connected sum formula for $\widehat{HF}$.

Now we use only balanced colorings that send the edges to one, so we write $\widehat{HF}(f)$ instead of $\widehat{HF}(f,\omega)$.
For $i=1,2$, let $v_i$ be the only vertex of $K_i$.
Then $K_1\#_{(v_1,v_2)}K_2$ (Definition \ref{dfn: disjoint-connected-wedge-sum}) is a transverse spatial graph consisting of two vertices and two edges.
By contracting one of two edges, we can obtain a transverse spatial graph corresponding to $K_1\# K_2$.

Theorem \ref{thm:connected-sum} implies
\[
\widehat{HF}(K_1\#_{(v_1,v_2)}K_2) \cong \widehat{HF}(K_1) \otimes \widehat{HF}(K_2) \otimes W(1).
\]
as absolute Maslov graded, relative Alexander graded vector space.
Contracting one edge of $K_1\#_{(v_1,v_2)}K_2$ yields a transverse spatial graph corresponding to $K_1\# K_2$.
By \cite[Theorem 1.9]{grid-MOY}, we have
\[
\widehat{HF}(K_1\# K_2)\cong \widehat{HF}(K_1) \otimes \widehat{HF}(K_2),
\]
as absolute Maslov graded, relative Alexander graded vector space.
Then the connected sum formula for $\widehat{GH}$ and $\widehat{HFK}$ follows.
\end{proof}

\section{The proof of Theorem \ref{thm:disjoint}}


Let $g_1$ and $g_2$ be two $n \times n$ graph grid diagrams for $(f_1,\omega_1)$ and $(f_1,\omega_2)$ respectively.
Then there is a natural $2n \times 2n$ graph grid diagram $g_{\sqcup}$ for $f_1\sqcup f_2$ using $g_1$ and $g_2$.
Take the $(2n+4)\times (2n+4)$ graph grid diagram $g'$ obtained from $g_{\sqcup}$ by adding two rows and columns as in Figure \ref{fig:g-sqcup}.
Let $\omega_{\sqcup}$ be a weight for $g_{\sqcup}$ naturally determined by $\omega_1$ and $\omega_2$.
Let $\omega'$ be a weight for $g'$ sending the marking representing the two unknots to one and the others to the same integers as $\omega_{\sqcup}$.

$g'$ represents the spatial graph consisting of the disjoint union of $f_1$, $f_2$, and two unknots.
As in the left of Figure \ref{fig:g_withcutedge}, let $g_{ij}$ $(i,j\in\{1,2\})$ be four $(n+2)\times(n+2)$ blocks obtained by cutting $g'$ along $\alpha_1$, $\alpha_{n+3}$, $\beta_1$, and $\beta_{n+3}$.

\begin{figure}
\includegraphics[scale=0.4]{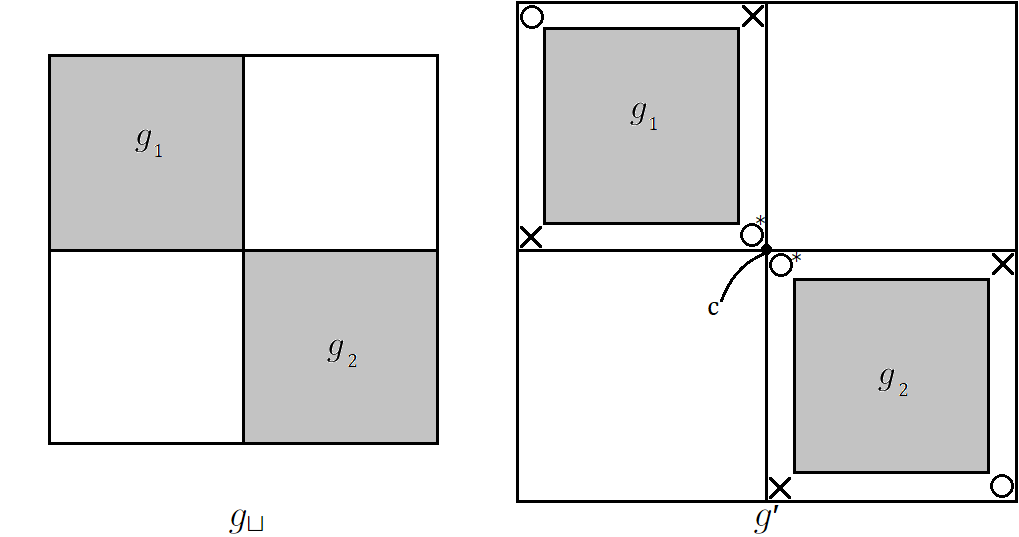}
\caption{The graph grid diagrams $g_{\sqcup}$ and $g'$.}
\label{fig:g-sqcup}
\end{figure}

The following Lemma is quickly proved as the extension of \cite[Lemma 8.4.2]{grid-book} using the same argument.
\begin{lem}
\label{lem:adding-unknot}
Let $(f,\omega)$ be an MOY graph.
Let $(\mathcal{O}, \omega_{\mathcal{O}})$ be the MOY graph where $\mathcal{O}$ is the unknot consisting of one vertex and edge, and $\omega_{\mathcal{O}}$ sends the edge of $\mathcal{O}$ to one.
Then there is an isomorphism of absolute Maslov relative Alexander graded $\mathbb{F}$-vector spaces
\begin{equation}
\label{eq:add1unknot}
\widehat{HF}(f\sqcup\mathcal{O},\omega\sqcup\omega_{\mathcal{O}})\cong \widehat{HF}(f,\omega)\otimes W(0).
\end{equation}

\end{lem}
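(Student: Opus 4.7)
The plan is to adapt the strategy of \cite[Lemma 8.4.2]{grid-book} to the MOY-graph setting. Starting from an $n\times n$ graph grid diagram $g$ for $(f,\omega)$, I would build an $(n+2)\times(n+2)$ graph grid diagram $g_{\sqcup}$ representing $f\sqcup\mathcal{O}$ by adjoining a $2\times 2$ block in the lower-right corner that encodes the unknot: one $O^*$-marking on one diagonal corner (for the vertex of $\mathcal{O}$), one $O$-marking on the opposite diagonal corner (for the interior of the edge), and two $X$-markings on the anti-diagonal. The two remaining off-diagonal blocks, of sizes $n\times 2$ and $2\times n$, contain no markings, and the weight $\omega\sqcup\omega_{\mathcal{O}}$ extends $\omega$ by assigning $1$ to each new marking.

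Next I would decompose $\widetilde{CF}(g_{\sqcup},\omega\sqcup\omega_{\mathcal{O}})$ according to how many of the $n+2$ state points lie in the $2\times 2$ unknot block. This number $k\in\{0,1,2\}$ gives a filtration by subcomplexes, exactly as in Section \ref{sec:str-of-cpx}. The $k=0$ and $k=1$ subquotients should contribute acyclic pieces, for the same reason as in Lemmas \ref{lem:N1-is-acyclic} and \ref{lem:I2-Nn-is-acyclic}: they decompose as tensor products of the acyclic building blocks $N_a(e),I_a(d)$ from Section \ref{sec:special chain complex1} and $C_k$ from Section \ref{sec:special chain complex2}. Applying Lemma \ref{lem:C/C'} repeatedly then reduces $\widetilde{HF}(g_{\sqcup},\omega\sqcup\omega_{\mathcal{O}})$ to the homology of the $k=2$ quotient.

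The $k=2$ quotient is naturally isomorphic to $\widetilde{CF}(g,\omega)\otimes\widetilde{CF}(g_{\mathcal{O}},\omega_{\mathcal{O}})$ as absolute Maslov, relative Alexander graded chain complexes, by the same reasoning as in Lemma \ref{lem:N0-CFxCF}: any empty rectangle either lies entirely in the $n\times n$ $g$-block or entirely in the $2\times 2$ unknot block, since the two empty off-diagonal blocks contain no state points in this regime. A direct computation on the two-generator complex $\widetilde{CF}(g_{\mathcal{O}},\omega_{\mathcal{O}})$ yields $W(0)\otimes W(0)$. Passing from tilde to hat via Proposition \ref{prop:tilde-hat} then cancels the $W$-factor corresponding to the new $O$-marking of $\mathcal{O}$, leaving the desired isomorphism $\widehat{HF}(f\sqcup\mathcal{O},\omega\sqcup\omega_{\mathcal{O}})\cong\widehat{HF}(f,\omega)\otimes W(0)$.

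The main obstacle I anticipate is verifying acyclicity of the off-diagonal ($k=0,1$) pieces. While the strategy mirrors the proof of Theorem \ref{thm:cutedge} (2), the unknot block has a different marking pattern than the single-$X$ configuration analyzed in Section \ref{sec:str-of-cpx}, so one must recheck that rectangles entering or leaving the off-diagonal blocks match up correctly with the special chain complexes of Sections \ref{sec:special chain complex1} and \ref{sec:special chain complex2}, and that the natural identifications of subquotients respect both the absolute Maslov grading and the relative Alexander grading. Once this bookkeeping is carried through, the remainder of the argument is formal.
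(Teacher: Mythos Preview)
The paper's proof is a one-line citation: it takes the explicit chain maps and homotopies of \cite[Lemmas~8.4.2 and~8.4.6]{grid-book}, built there for the minus version, and observes that the induced maps on the hat complexes still give the required quasi-isomorphism. Although you also invoke \cite[Lemma~8.4.2]{grid-book}, what you actually describe is an entirely different argument based on the filtration machinery of Sections~\ref{sec:special chain complex1}--\ref{sec:proof-of-main-thm}.

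That argument has a genuine gap, visible already from a dimension count. The complex $\widetilde{CF}(g_{\mathcal{O}},\omega_{\mathcal{O}})$ has exactly two generators and zero differential, so $\widetilde{HF}(g_{\mathcal{O}},\omega_{\mathcal{O}})$ is two-dimensional (isomorphic to $W(1)$ up to shift), not $W(0)\otimes W(0)$. If your $k=0,1$ subquotients were acyclic and the $k=2$ piece really computed $\widetilde{CF}(g,\omega)\otimes\widetilde{CF}(g_{\mathcal{O}},\omega_{\mathcal{O}})$, you would obtain
\[
\widetilde{HF}(g_{\sqcup},\omega\sqcup\omega_{\mathcal{O}})\;\cong\;\widetilde{HF}(g,\omega)\otimes W(1),
\]
and feeding this through Proposition~\ref{prop:tilde-hat}, the single new $W(1)$ from the unknot's $O$-marking cancels, leaving $\widehat{HF}(f\sqcup\mathcal{O})\cong\widehat{HF}(f)$. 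That is off by exactly the factor $W(0)$ in the statement.

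The missing $W(0)$ comes precisely from the pieces you propose to discard. Compare Section~8: for a genuine disjoint union one does \emph{not} get acyclicity of the analogues of $N_0$ and $I_1$; rather one shows (Proposition~\ref{prop:partial-disjoint-trivial}) that the connecting map between them vanishes on homology, so both survive and together supply the extra $W(0)$. The marking pattern of your disjoint-union block (two $X$'s, an $O$, and an $O^*$, with both off-diagonal blocks empty) is not the cut-edge pattern of Section~\ref{sec:str-of-cpx}, and the acyclicity mechanism there does not carry over. Your anticipated ``bookkeeping'' obstacle is therefore not bookkeeping but an actual failure of the claimed acyclicity.
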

\begin{proof}
In \cite[Lemmas 8.4.2 and 8.4.6]{grid-book}, they constructed quasi-isomorphisms and chain homotopy equivalences for the minus version.
Consider the induced maps on our hat version.
Then the analogies of the proof of \cite[Lemmas 8.4.2 and 8.4.6]{grid-book} prove (\ref{eq:add1unknot}).
\end{proof}

Let $c$ be the intersection point $\alpha_{n+3}\cap\beta_{n+3}$ on $g'$.
Using the same notations as Section \ref{sec:str-of-cpx}, we can write the grid chain complex of $g'$ as $\widetilde{CF}(g',\omega')=\mathrm{Cone}(\partial'\colon \widetilde{I} \to \widetilde{N})$, where $\partial'$ is the chain map counting empty rectangles from a state of $\widetilde{I}$ to a state of $\widetilde{N}$.

We will examine the structure of subcomplexes $N_0$ and $I_1$.
Take two points $d=\alpha_{n+3}\cap\beta_1$ and $e=\alpha_{1}\cap\beta_{n+3}$ on $g'$.
We decompose the set of states $\mathbf{S}(g')$ as the disjoint union $\mathbf{S}(g')=II\sqcup IN \sqcup NI \sqcup NN$, where
\begin{align*}
II &=\{\mathbf{x\in S}(g)|d,e\in\mathbf{x}\},\\
IN &=\{\mathbf{x\in S}(g)|d\in\mathbf{x},e\notin\mathbf{x}\},\\
NI &=\{\mathbf{x\in S}(g)|d\notin\mathbf{x},e\in\mathbf{x}\},\\
NN &=\{\mathbf{x\in S}(g)|d,e\notin\mathbf{x}\}.\\
\end{align*}
Because of the markings representing two unknots,  using the spans of them, we have the decomposition of the chain complex $N_0\cong \widetilde{II}\oplus\widetilde{IN}\oplus\widetilde{NI}\oplus\widetilde{NN}$.
Let $\mathbb{II}$, $\mathbb{IN}$, $\mathbb{NI}$, and $\mathbb{NN}$ be the subcomplexes of $I_1$ isomorphic to $\widetilde{II}$, $\widetilde{IN}$, $\widetilde{NI}$, and $\widetilde{NN}$, respectively.
Then we have the decomposition of the chain complex $I_1\cong \mathbb{II}\oplus\mathbb{IN}\oplus\mathbb{NI}\oplus\mathbb{NN}$.

\begin{rem}
In this case, the chain map $\partial'|_{I_1}\colon I_1\to N_0$ is not an isomorphism.
The isomorphism $I_1\cong N_0$ is given by the chain map counting only one empty rectangle whose northeast corner is $c$.
\end{rem}

\begin{prop}
\label{prop:partial-disjoint-trivial}
The induced map on homology $H(\partial ')\colon H(\widetilde{I}) \to H(\widetilde{N})$ is trivial.   
\end{prop}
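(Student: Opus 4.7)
The plan is to reduce to analyzing the chain map $\partial'|_{I_1} \colon I_1 \to N_0$ and to show that it induces the zero map on homology. The structural results of Section \ref{sec:str-of-cpx} together with Lemmas \ref{lem:C-is-acyclic}--\ref{lem:I2-Nn-is-acyclic} apply verbatim to $g'$ because they rely only on the block structure of the graph grid diagram rather than on $g'$ actually representing a transverse spatial graph with a cut edge; thus the inclusions $I_1 \hookrightarrow \widetilde{I}$ and $N_0 \hookrightarrow \widetilde{N}$ are quasi-isomorphisms, and the filtration $\widetilde{\partial}(I_1) \subset I_1 \oplus N_0$ (using $I_0 = 0$) shows that $\partial'$ restricts to a chain map $\partial'|_{I_1} \colon I_1 \to N_0$. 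Consequently it is enough to show that $H(\partial'|_{I_1}) = 0$.

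Next, I would exploit the four-fold splittings $I_1 \cong \mathbb{II} \oplus \mathbb{IN} \oplus \mathbb{NI} \oplus \mathbb{NN}$ and $N_0 \cong \widetilde{II} \oplus \widetilde{IN} \oplus \widetilde{NI} \oplus \widetilde{NN}$ recorded in the paper, which partition states according to whether $d$ and $e$ belong to them. Because the two added unknots of $g'$ have their $O^*$- and $X$-markings placed on the rows and columns through $d$ and $e$, no empty rectangle counted by the differentials of $I_1$ or $N_0$ can change the $d$- or $e$-membership of a state, so these splittings are decompositions as chain complexes. The same style of argument as in Lemma \ref{lem:Ne} and the special chain complexes of Sections \ref{sec:special chain complex1}--\ref{sec:special chain complex2} should then show that the off-diagonal summands $\mathbb{IN}, \mathbb{NI}, \widetilde{IN}, \widetilde{NI}$ are acyclic: in each such summand one of $d$ or $e$ is present and the adjacent unknot $O$-marking yields a combinatorial null-homotopy built from rectangles anchored at that $O$-marking, just as in the proof of Theorem \ref{thm:cutedge} (1).

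Finally, I would analyze the two potentially nontrivial diagonal components $\mathbb{II} \to \widetilde{II}$ and $\mathbb{NN} \to \widetilde{NN}$ of $\partial'|_{I_1}$ by direct rectangle counting, in analogy with the proof of Lemma \ref{lem:partial-hash-trivial}: any empty rectangle with $c$ at a corner joining such states would have to pass through at least one of the newly added unknot $X$-markings and is therefore not counted by $\partial'$. Hence the image of $\partial'|_{I_1}$ lies in the acyclic summands $\widetilde{IN} \oplus \widetilde{NI}$ and its induced map on homology vanishes, which via Lemma \ref{lem:C/C'} and the standard mapping cone long exact sequence gives $H(\partial') = 0$. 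The main obstacle I expect is Step 2 together with the rectangle check of Step 3: both require careful geometric bookkeeping to verify that the precise placement of the two unknots in Figure \ref{fig:g-sqcup} (a) makes the four-fold splittings truly respected by the differentials and (b) really obstructs every empty rectangle with $c$ at a corner that would contribute to the diagonal components.
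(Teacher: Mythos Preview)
Your proposal has genuine gaps in Steps 2 and 3, and the paper's argument proceeds along quite different lines.

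\textbf{Step 2 fails.} The summands $\widetilde{IN}$ and $\widetilde{NI}$ (and likewise $\mathbb{IN},\mathbb{NI}$) are \emph{not} acyclic. Under the isomorphism $N_0\cong\widetilde{CF}(g_1\sqcup\mathcal{O})\otimes\widetilde{CF}(g_2\sqcup\mathcal{O})$ from Lemma~\ref{lem:N0-CFxCF}, the $d/e$ splitting corresponds to splitting each tensor factor according to which of the two states of the $2\times 2$ unknot block is used. That block has all four squares marked (an $O^*$, an $O$, and two $X$'s), so there is no empty rectangle between its two states; both pieces of $\widetilde{CF}(g_i\sqcup\mathcal{O})$ are isomorphic (up to grading shift) to $\widetilde{CF}(g_i)$ and carry the same nontrivial homology. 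Thus all four summands of $N_0$ have homology of the same nonzero rank, and the analogy with Theorem~\ref{thm:cutedge}(1) does not apply: there is no sink or source here and no $O$-marking that can serve as the anchor of a null-homotopy.

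\textbf{Step 3 fails.} For $\mathbf{x}\in\mathbb{NN}$ with $\mathbf{x}_{21}=\alpha_j\cap\beta_k$ ($j,k>1$), the rectangle with $c$ at its northeast corner and $\mathbf{x}_{21}$ at its southwest corner lies entirely in the marking-free block $g_{21}$; it is counted by $\partial'$ and its target lies in $\widetilde{NN}$. The unknot $X$-markings sit inside $g_{11}$ and $g_{22}$, not in $g_{21}$, so they do not obstruct this rectangle. Hence the component $\mathbb{NN}\to\widetilde{NN}$ of $\partial'$ is nonzero on chains, and one cannot conclude by the mechanism of Lemma~\ref{lem:partial-hash-trivial}.

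The paper's proof does not try to make $\partial'$ vanish on a convenient subcomplex. Instead it takes a cycle $\zeta=\sum\mathbf{x}_i$ in $I_1$ and, in each of the four cases $\mathbb{II},\mathbb{IN},\mathbb{NI},\mathbb{NN}$, constructs an explicit primitive $\eta\in\widetilde{N}$ with $\partial_N(\eta)=\partial'(\zeta)$. These primitives live in $N_1$, not $N_0$, and are produced by hand-built linear maps $F,G,H,I\colon I_1\to N_1$ that rearrange the points $\mathbf{x}_{12},\mathbf{x}_{21}$ together with the distinguished points of $\mathbf{x}$ on $\alpha_1$ and $\beta_1$. The verification that $\partial_N\circ F=\partial'$ (and similarly for $G+H+I$) on cycles is a rectangle-by-rectangle comparison (see Figures~\ref{fig:dis-case2}--\ref{fig:dis-case4-check}). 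This explicit construction is the missing idea in your outline.
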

\begin{proof}
Let $\zeta$ be a non-zero element of $H(\widetilde{I})$.
We can assume that $\zeta$ is represented by the sum of the states of $I_1$ as $\mathbf{x}_1+\dots+\mathbf{x}_s$.
We will give the element $\eta\in N_0$ such that $\partial_N(\eta)=\partial'(\mathbf{x}_1+\dots+\mathbf{x}_s)$.

For $i=1,\dots,s$, let $\mathbf{x}_i=\begin{pmatrix}
\mathbf{x}_{i_{11}} & \mathbf{x}_{i_{12}} \\
\mathbf{x}_{i_{21}} & \mathbf{x}_{i_{22}} \\
\end{pmatrix}$.
We remark that $\mathbf{x}_{i_{12}}=\{c\}=\{\alpha_{n+3}\cap\beta_{n+3}\}$.

Since $I_1$ is decomposed into four chain complexes, it is sufficient to consider the following four cases.
\begin{case}
$\mathbf{x}_1+\dots+\mathbf{x}_s$ is a cycle of $\mathbb{II}$.
Then $\mathbf{x}_{i_{21}}=\{\alpha_{1}\cap\beta_{1}\}$.
In this case, we have $\partial'(\mathbf{x}_i)=0$ because $\partial'$ counts exactly two empty rectangles whose northeast and southwest corners are $\mathbf{x}_{i_{12}}$ and $\mathbf{x}_{i_{21}}$.
\end{case}

\begin{case}
$\mathbf{x}_1+\dots+\mathbf{x}_s$ is a cycle of $\mathbb{IN}$.
Then $\mathbf{x}_{i_{21}}$ is on the vertical circle $\beta_1$ and $\mathbf{x}_{i_{21}}\neq \{\alpha_1\cap\beta_{1}\}$.
Let $\mathbf{x}_{i_{21}}=\{\alpha_j\cap\beta_1\}$.
In this case, $\mathbf{x}_{i_{22}}$ has a point on $\alpha_1$.
Write it as $\alpha_1\cap\beta_k$.

Consider a linear map $F\colon \mathbb{IN}\to N_1$ whose value on $\mathbf{x}=\begin{pmatrix}
\mathbf{x}_{11} & \mathbf{x}_{12} \\
\mathbf{x}_{21} & \mathbf{x}_{22} \\
\end{pmatrix}\in \mathbb{IN}$ is given by $F(\mathbf{x})=\begin{pmatrix}
\mathbf{y}_{11} & \mathbf{y}_{12} \\
\mathbf{y}_{21} & \mathbf{y}_{22} \\
\end{pmatrix}$, where

\begin{align*}
\mathbf{y}_{11} &= \mathbf{x}_{11},\\
\mathbf{y}_{12} &= \{\alpha_{n+3}\cap\beta_{k}\},\\
\mathbf{y}_{21} &= \{\alpha_{1}\cap\beta_{1}\},\\
\mathbf{y}_{22} &= (\mathbf{x}_{22}\cup\{\alpha_j\cap\beta_{n+3}\})\setminus\{\alpha_1\cap\beta_k\}.
\end{align*}

\begin{clm}
$\partial_N(F(\mathbf{x}_1+\dots+\mathbf{x}_s))=\partial'(\mathbf{x}_1+\dots+\mathbf{x}_s)$.
\end{clm}
\begin{proof}
For each $i$, $\partial'(\mathbf{x}_i)$ is the sum of two states of $\widetilde{N}$.
A direct computation shows that $\partial_N(F(\mathbf{x}_i))$ contains these two states.
Let $\partial_N(F(\mathbf{x}_i))=\partial'(\mathbf{x}_i)+\phi(F(\mathbf{x}_i))$.

\begin{figure}
\includegraphics[scale=0.23]{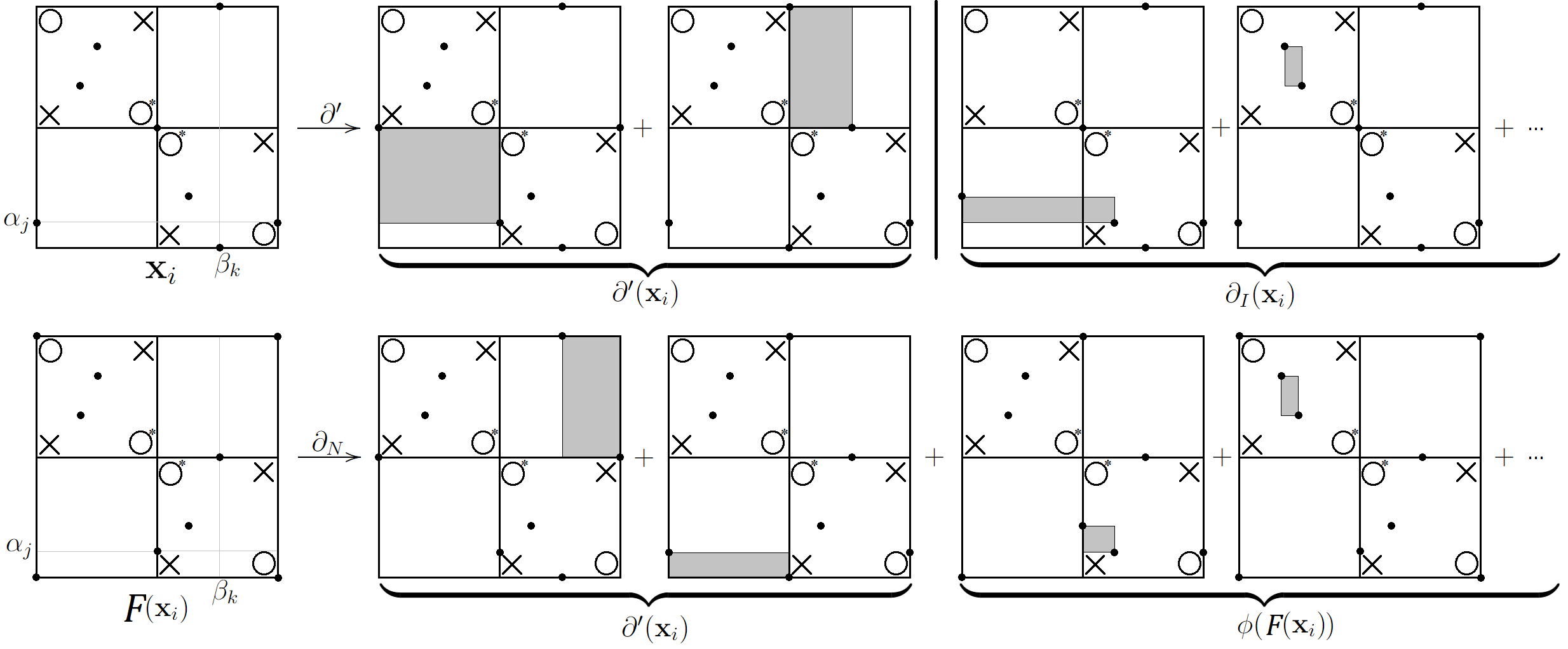}
\caption{\textbf{Case 2.} The correspondence between empty rectangles from $\mathbf{x}_i$ and from $F(\mathbf{x}_i)$.}
\label{fig:dis-case2}
\end{figure}
We will show that 
\begin{align}
\label{eq:case2phiF=Fpartial}
\phi(F(\mathbf{x}_i))=F(\partial_I(\mathbf{x}_i)).
\end{align}
Consider the empty rectangles counted by $\partial_I(\mathbf{x}_i)$ and $\phi(G(\mathbf{x}_i))$.
Let $r\in\mathrm{Rect}^\circ(\mathbf{x}_i,\mathbf{z})$ be an empty rectangle counted by $\partial_I(\mathbf{x}_i)$.
We see that there exist the corresponding empty rectangle $r'\in\mathrm{Rect}^\circ(F(\mathbf{x}_i),\mathbf{z}')$ counted by $\phi(F(\mathbf{x}_i))$.
We have three cases (see Figure \ref{fig:dis-case2}).
\begin{itemize}
\item If $r$ moves the point $\alpha_j\cap\beta_1\in\mathbf{x}_i$, then $r\cap g_{11}= \emptyset$ and $r\cap g_{22}\neq \emptyset$.
Let $r'$ be the rectangle obtained from $r$ by replacing the corner point $\alpha_j\cap\beta_1$ with $\alpha_j\cap \beta_{n+3}$.
Since the two points $\beta_1\cap\mathbf{z}$ and $\beta_{n+3}\cap\mathbf{z}'$ are on the same horizontal circle, we have $F(\mathbf{z})=\mathbf{z}'$.
\item If $r$ moves the point $\alpha_1\cap\beta_k\in\mathbf{x}_i$, then $r\cap g_{11}= \emptyset$ and $r\cap g_{22}\neq \emptyset$.
Let $r'$ be the rectangle obtained from $r$ by replacing the corner point $\alpha_1\cap\beta_k$ with $\alpha_{n+3}\cap \beta_k$.
Since the two points $\alpha_1\cap\mathbf{z}$ and $\alpha_{n+3}\cap\mathbf{z}'$ are on the same vertical circle, we have $F(\mathbf{z})=\mathbf{z}'$.
\item If $r$ preserves $\alpha_j\cap\beta_1, \alpha_1\cap\beta_k\in\mathbf{x}_i$, then $r'=r$.
Clearly we have $F(\mathbf{z})=\mathbf{z}'$.
\end{itemize}
Conversely, for each empty rectangle $r'$ of $\phi(F(\mathbf{x}_i))$, there exists an empty rectangle of $\partial_I(\mathbf{x}_i)$ corresponding to $r'$.
Thus (\ref{eq:case2phiF=Fpartial}) is proved.

Finally, (\ref{eq:case2phiF=Fpartial}) gives
\begin{align*}
\partial_N(F(\mathbf{x}_1+\dots+\mathbf{x}_s))&=\partial'(\mathbf{x}_1+\dots+\mathbf{x}_s)+\phi(F(\mathbf{x}_1))+\dots+\phi(F(\mathbf{x}_s))\\
&=\partial'(\mathbf{x}_1+\dots+\mathbf{x}_s)+F(\partial_I(\mathbf{x}_1))+\dots+F(\partial_I(\mathbf{x}_s))\\
&=\partial'(\mathbf{x}_1+\dots+\mathbf{x}_s)+F(\partial_I(\mathbf{x}_1+\dots+\mathbf{x}_s))\\
&=\partial'(\mathbf{x}_1+\dots+\mathbf{x}_s).
\end{align*}

\end{proof}
\end{case}

\begin{case}
$\mathbf{x}_1+\dots+\mathbf{x}_s$ is a cycle of $\mathbb{NI}$.
The same result as Case 2 is proved by switching $\{\alpha_i\}_{i=1}^{2n+4}$ and $\{\beta_i\}_{i=1}^{2n+4}$.
\end{case}

\begin{figure}
\includegraphics[scale=0.24]{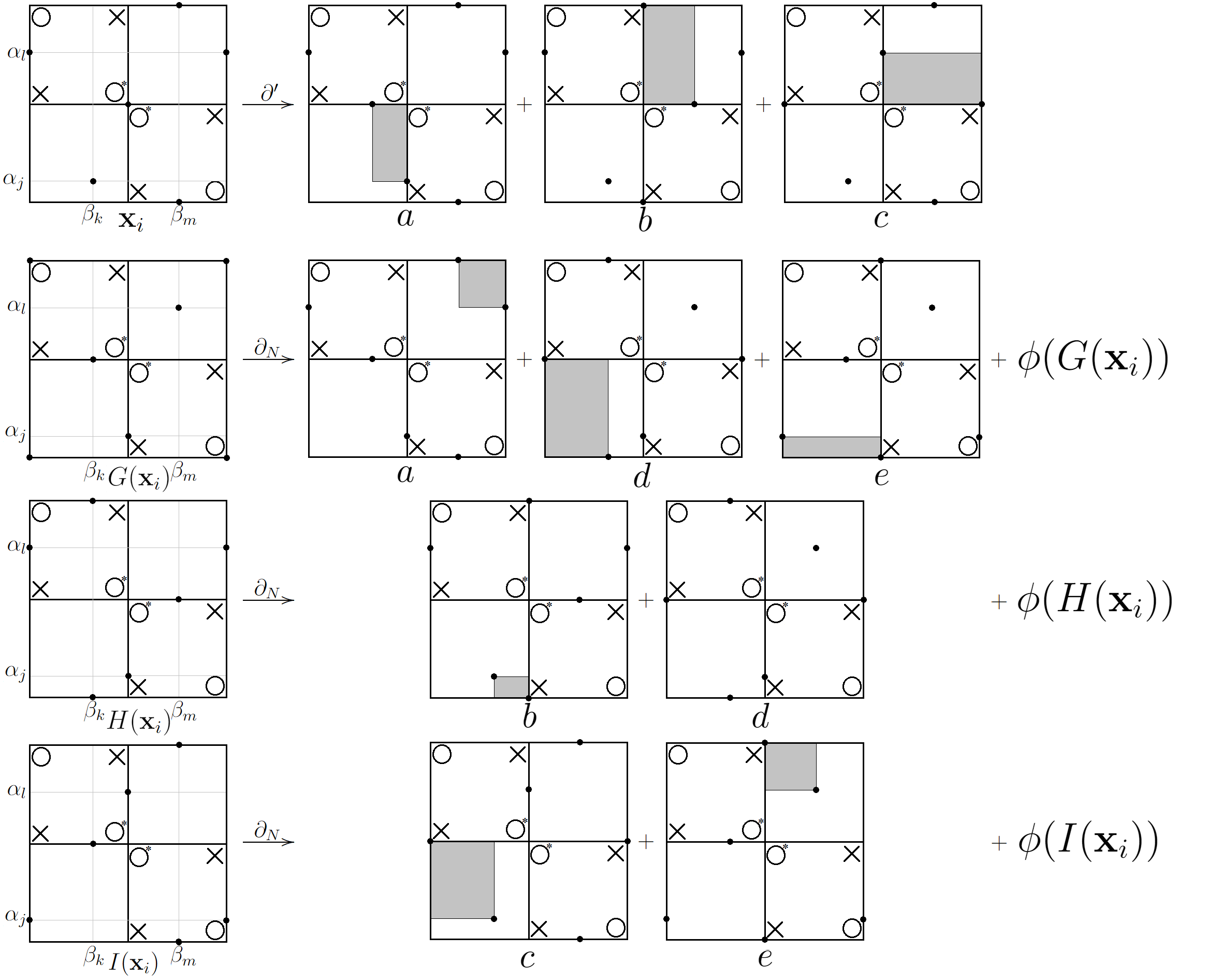}
\caption{\textbf{Case 4.} Here are the empty rectangles from $\mathbf{x}_i$, $\mathbf{y}_i$, $\mathbf{z}_i$, and $\mathbf{w}_i$. This figure implies that $\partial_N(\mathbf{y}_i+\mathbf{z}_i+\mathbf{w}_i)=\partial'(\mathbf{x}_i)+\phi(\mathbf{y}_i)+\phi(\mathbf{z}_i)+\phi(\mathbf{w}_i)$.}
\label{fig:dis-case4}
\end{figure}
\begin{figure}
\includegraphics[scale=0.25]{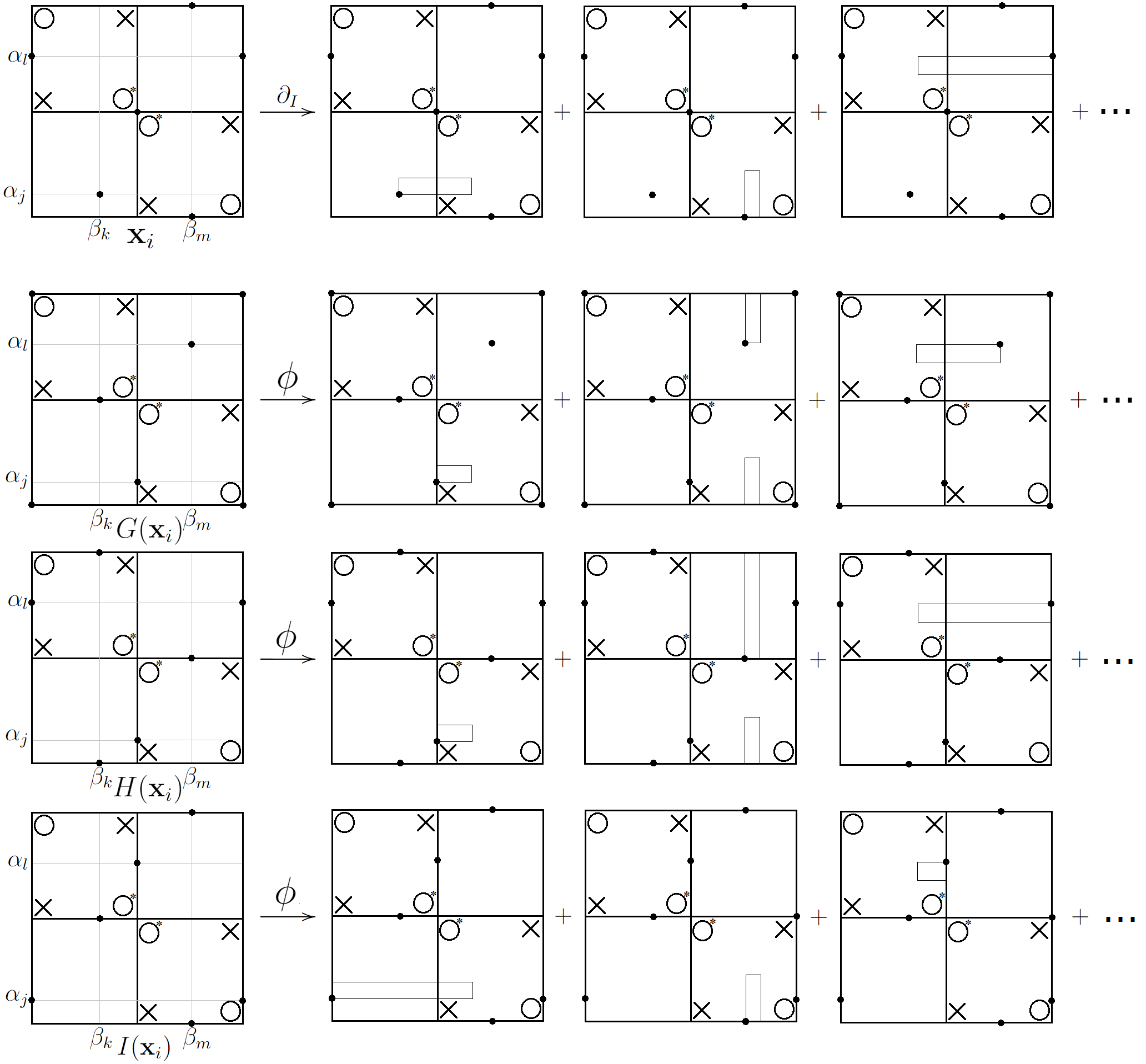}
\caption{\textbf{Case 4.} The rectangles counted by $\partial_I$ and the related rectangles counted by $\phi$.}
\label{fig:dis-case4-check}
\end{figure}

\begin{case}
$\mathbf{x}_1+\dots+\mathbf{x}_s$ is a cycle of $\mathbb{NN}$.
Then the point $\mathbf{x}_{i_{21}}$ is not on $\alpha_1$ or $\beta_1$.
Let $\mathbf{x}_{i_{21}}=\{\alpha_{j}\cap\beta_{k}\}$.
In this case, $\mathbf{x}_{i_{11}}$ has a point on $\beta_1$ and $\mathbf{x}_{i_{22}}$ has a point on $\alpha_1$.
Let $\alpha_l\cap\beta_1\in\mathbf{x}_{i_{11}}$ and $\alpha_1\cap\beta_m\in\mathbf{x}_{i_{22}}$.

Consider three linear maps $G,H,I\colon\mathbb{NN}\to N_1$ whose values on $\mathbf{x}=\begin{pmatrix}
\mathbf{x}_{11} & \mathbf{x}_{12} \\
\mathbf{x}_{21} & \mathbf{x}_{22} \\
\end{pmatrix}\in \mathbb{NN}$ are given by
\[
G(\mathbf{x})=\begin{pmatrix}
\mathbf{y}_{11} & \mathbf{y}_{12} \\
\mathbf{y}_{21} & \mathbf{y}_{22} \\
\end{pmatrix}, 
H(\mathbf{x})=\begin{pmatrix}
\mathbf{z}_{11} & \mathbf{z}_{12} \\
\mathbf{z}_{21} & \mathbf{z}_{22} \\
\end{pmatrix},
I(\mathbf{w})=\begin{pmatrix}
\mathbf{w}_{11} & \mathbf{w}_{12} \\
\mathbf{w}_{21} & \mathbf{w}_{22} \\
\end{pmatrix},
\]
where
\begin{align*}
\mathbf{y}_{11} &= (\mathbf{x}_{11}\cup\{\alpha_{n+3}\cap\beta_{k}\})\setminus\{\alpha_l\cap\beta_1\},\\
\mathbf{y}_{12} &= \{\alpha_{l}\cap\beta_{m}\},\\
\mathbf{y}_{21} &= \{\alpha_{1}\cap\beta_{1}\},\\
\mathbf{y}_{22} &= (\mathbf{x}_{22}\cup\{\alpha_j\cap\beta_{n+3}\})\setminus\{\alpha_1\cap\beta_m\},\\
\end{align*}
\begin{align*}
\mathbf{z}_{11} &= (\mathbf{x}_{11}\cup\{\alpha_{l}\cap\beta_{1}\})\setminus\{\alpha_l\cap\beta_1\},\\
\mathbf{z}_{12} &= \{\alpha_{n+3}\cap\beta_{m}\},\\
\mathbf{z}_{21} &= \{\alpha_{1}\cap\beta_{k}\},\\
\mathbf{z}_{22} &= (\mathbf{x}_{22}\cup\{\alpha_j\cap\beta_{n+3}\})\setminus\{\alpha_1\cap\beta_m\},\\
\end{align*}
\begin{align*}
\mathbf{w}_{11} &= (\mathbf{x}_{11}\cup\{\alpha_{n+3}\cap\beta_{k}\})\setminus\{\alpha_l\cap\beta_1\},\\
\mathbf{w}_{12} &= \{\alpha_{l}\cap\beta_{n+3}\},\\
\mathbf{w}_{21} &= \{\alpha_{j}\cap\beta_{1}\},\\
\mathbf{w}_{22} &= (\mathbf{x}_{22}\cup\{\alpha_1\cap\beta_{m}\})\setminus\{\alpha_1\cap\beta_m\}.
\end{align*}

\begin{clm}
$\partial_N(\sum_{i=1}^s(G(\mathbf{x}_i)+H(\mathbf{x}_i)+I(\mathbf{x}_i))=\partial'(\mathbf{x}_1+\dots+\mathbf{x}_s)$.
\end{clm}
For each $i$, $\partial'(\mathbf{x}_i)$ is the sum of three states of $\widetilde{N}$.

For $G(\mathbf{x}_i)$, exactly three empty rectangles counted by $\partial_N$ have the point $\alpha_1\cap\beta_1$ as their corner.
Let $\partial_N(G(\mathbf{x}_i))=\psi(G(\mathbf{x}_i))+\phi(G(\mathbf{x}_i))$, where $\psi(G(\mathbf{x}_i))$ is the sum of three states obtained by counting these three rectangles and $\phi(G(\mathbf{x}_i))$ is the sum of others.

For $H(\mathbf{x}_i)$, exactly two rectangles counted by $\partial_N$ move two of the four points on $\alpha_1$, $\alpha_{n+3}$, $\beta_1$, and $\beta_{n+3}$.
Write $\partial_N(H(\mathbf{x}_i))=\psi(H(\mathbf{x}_i))+\phi(H(\mathbf{x}_i))$, where $\psi(H(\mathbf{x}_i))$ is the two states obtained by counting these two rectangles and $\phi(H(\mathbf{x}_i))$ is the sum of others.

For $I(\mathbf{x}_i)$, write $\partial_N(I(\mathbf{x}_i))=\psi(I(\mathbf{x}_i))+\phi(I(\mathbf{x}_i))$ in the same way as $H(\mathbf{x}_i)$.

A direct computation (see Figure \ref{fig:dis-case4}) shows
\begin{equation}
\label{eq:partial'-GHI}
\partial'(\mathbf{x}_i)=\psi(G(\mathbf{x}_i))+\psi(H(\mathbf{x}_i))+\psi(I(\mathbf{x}_i)).
\end{equation}
Then, we will prove the following equations:
\begin{align}
\label{eq:case4phiG=Gpartial}
\phi(G(\mathbf{x}_i))&=G(\partial_I(\mathbf{x}_i)),\\
\label{eq:case4phiH=Hpartial}
\phi(H(\mathbf{x}_i))&=H(\partial_I(\mathbf{x}_i)),\\
\label{eq:case4phiI=Ipartial}
\phi(I(\mathbf{x}_i))&=I(\partial_I(\mathbf{x}_i)).
\end{align}

Consider the empty rectangles counted by $\partial_I(\mathbf{x}_i)$ and $\phi(G(\mathbf{x}_i))$.
Let $r$ be an empty rectangle counted by $\partial_I(\mathbf{x}_i)$.
Suppose that $r\in\mathrm{Rect}^\circ(\mathbf{x}_i,\mathbf{z})$.
There exists the corresponding empty rectangle $r'\in\mathrm{Rect}^\circ(G(\mathbf{x}_i),\mathbf{z}')$ counted by $\phi(G(\mathbf{x}_i))$.
We have five cases (see Figure \ref{fig:dis-case4-check}).
\begin{itemize}
\item If $r$ moves the point $\alpha_j\cap\beta_k\in\mathbf{x}_i$ and $r\cap g_{11}\neq\emptyset$, let $r'$ be the rectangle obtained from $r$ by replacing the corner point $\alpha_j\cap\beta_k$ with $\alpha_{n+3}\cap \beta_{k}$.
Since the two points $\alpha_{j}\cap\mathbf{z}$ and $\alpha_{n+3}\cap\mathbf{z}'$ are on the same vertical circle, we have $G(\mathbf{z})=\mathbf{z}'$.
\item If $r$ moves the point $\alpha_j\cap\beta_k\in\mathbf{x}_i$ and $r\cap g_{22}\neq\emptyset$, let $r'$ be the rectangle obtained from $r$ by replacing the corner point $\alpha_j\cap\beta_k$ with $\alpha_{j}\cap \beta_{n+3}$.
Since the two points $\beta_k\cap\mathbf{z}$ and $\beta_{n+3}\cap\mathbf{z}'$ are on the same horizontal circle, we have $G(\mathbf{z})=\mathbf{z}'$.
\item If $r$ moves the point $\alpha_1\cap\beta_m\in\mathbf{x}_i$, then $r\cap g_{11}= \emptyset$ and $r\cap g_{22}\neq \emptyset$.
Let $r'$ be the rectangle obtained from $r$ by replacing the corner point $\alpha_1\cap\beta_m$ with $\alpha_{l}\cap \beta_m$.
Since the two points $\alpha_1\cap\mathbf{z}$ and $\alpha_{l}\cap\mathbf{z}'$ are on the same vertical circle, we have $G(\mathbf{z})=\mathbf{z}'$.
\item If $r$ moves the point $\alpha_l\cap\beta_1\in\mathbf{x}_i$, then $r\cap g_{11}\neq \emptyset$ and $r\cap g_{22}=\emptyset$.
Let $r'$ be the rectangle obtained from $r$ by replacing the corner point $\alpha_l\cap\beta_1$ with $\alpha_{l}\cap \beta_m$.
Since the two points $\beta_1\cap\mathbf{z}$ and $\beta_{m}\cap\mathbf{z}'$ are on the same horizontal circle, we have $G(\mathbf{z})=\mathbf{z}'$.
\item If $r$ preserves $\alpha_j\cap\beta_1, \alpha_1\cap\beta_k\in\mathbf{x}_i$, then let $r'=r$.
We have $G(\mathbf{z})=\mathbf{z}'$.
\end{itemize}
Conversely, for each empty rectangle $r'$ of $\phi(G(\mathbf{x}_i))$, there exists an empty rectangle of $\partial_I(\mathbf{x}_i)$ corresponding to $r'$.
Thus (\ref{eq:case4phiG=Gpartial}) is proved.

Consider the empty rectangles counted by $\partial_I(\mathbf{x}_i)$ and $\phi(H(\mathbf{x}_i))$.
Let $r$ be an empty rectangle appearing in $\partial_I(\mathbf{x}_i)$.
Suppose that $r\in\mathrm{Rect}^\circ(\mathbf{x}_i,\mathbf{z})$.
There exists the corresponding empty rectangle $r'\in\mathrm{Rect}^\circ(H(\mathbf{x}_i),\mathbf{z}')$ appearing in $\phi(H(\mathbf{x}_i))$ as follows.
We have five cases.
\begin{itemize}
\item If $r$ moves the point $\alpha_j\cap\beta_k\in\mathbf{x}_i$ and $r\cap g_{11}\neq\emptyset$, let $r'$ be the rectangle obtained from $r$ by replacing the corner point $\alpha_j\cap\beta_k$ with $\alpha_{1}\cap \beta_{k}$.
Since the two points $\alpha_{j}\cap\mathbf{z}$ and $\alpha_{1}\cap\mathbf{z}'$ are on the same vertical circle, we have $F(\mathbf{z})=\mathbf{z}'$.
\item If $r$ moves the point $\alpha_j\cap\beta_k\in\mathbf{x}_i$ and $r\cap g_{22}\neq\emptyset$, let $r'$ be the rectangle obtained from $r$ by replacing the corner point $\alpha_j\cap\beta_k$ with $\alpha_{j}\cap \beta_{n+3}$.
Since the two points $\beta_k\cap\mathbf{z}$ and $\beta_{n+3}\cap\mathbf{z}'$ are on the same horizontal circle, we have $H(\mathbf{z})=\mathbf{z}'$.
\item If $r$ moves the point $\alpha_1\cap\beta_m\in\mathbf{x}_i$, then $r\cap g_{11}= \emptyset$ and $r\cap g_{22}\neq \emptyset$.
Let $r'$ be the rectangle obtained from $r$ by replacing the corner point $\alpha_1\cap\beta_m$ with $\alpha_{n+3}\cap \beta_m$.
Since the two points $\alpha_1\cap\mathbf{z}$ and $\alpha_{n+3}\cap\mathbf{z}'$ are on the same vertical circle, we have $H(\mathbf{z})=\mathbf{z}'$.
\item If $r$ moves the point $\alpha_l\cap\beta_1\in\mathbf{x}_i$, then $r\cap g_{11}\neq \emptyset$ and $r\cap g_{22}=\emptyset$.
Let $r'=r$ and we have $H(\mathbf{z})=\mathbf{z}'$.
\item If $r$ preserves $\alpha_j\cap\beta_1, \alpha_1\cap\beta_k\in\mathbf{x}_i$, then $r'=r$.
We have $H(\mathbf{z})=\mathbf{z}'$.
\end{itemize}
Conversely, for each empty rectangle $r'$ of $\phi(H(\mathbf{x}_i))$, there exists an empty rectangle of $\partial_I(\mathbf{x}_i)$ corresponding to $r'$.
Thus (\ref{eq:case4phiH=Hpartial}) is proved.

(\ref{eq:case4phiI=Ipartial}) can be proved in the same way as (\ref{eq:case4phiH=Hpartial}).

Finally, (\ref{eq:partial'-GHI})-(\ref{eq:case4phiI=Ipartial}) give
\begin{align*}
&\partial_N(\sum_{i=1}^s(G(\mathbf{x}_i)+H(\mathbf{x}_i)+I(\mathbf{x}_i))\\
&=\partial'(\sum_{i=1}^s\mathbf{x}_i)+\sum_{i=1}^s\phi(G(\mathbf{x}_1))+\sum_{i=1}^s\phi(H(\mathbf{x}_1))+\sum_{i=1}^s\phi(I(\mathbf{x}_1))\\
&=\partial'(\sum_{i=1}^s\mathbf{x}_i)+G(\partial_I(\sum_{i=1}^s\mathbf{x}_i))+H(\partial_I(\sum_{i=1}^s\mathbf{x}_i))+I(\partial_I(\sum_{i=1}^s\mathbf{x}_i))\\
&=\partial'(\sum_{i=1}^s\mathbf{x}_i).
\end{align*}
\end{case}
\end{proof}

\begin{proof}[proof of Theorem \ref{thm:disjoint}]
Using Lemma \ref{lem:adding-unknot}, we have 
\[
\widehat{HF}(g',\omega')\cong \widehat{HF}(g_\sqcup,\omega_{\sqcup})\otimes W(0)^{\otimes 2}.
\]
Proposition \ref{prop:tilde-hat} gives
\[
\widetilde{HF}(g',\omega')\cong \widetilde{HF}(g_\sqcup,\omega_\sqcup)\otimes W(0)^{\otimes 2} \otimes W(1)^{\otimes 2}.
\]
Let $g_1\sqcup\mathcal{O}$ (respectively $g_2\sqcup\mathcal{O}$) be a $(n+2)\times (n+2)$ graph grid diagram which is the same as the upper left (respectively lower right) $(n+2)\times (n+2)$ block of $g'$.
Let $\omega_1\sqcup\mathcal{O}$ and $\omega_2\sqcup\mathcal{O}$ be weights for $g_1\sqcup\mathcal{O}$ and $g_2\sqcup\mathcal{O}$ respectively naturally induced by $\omega'$.
Then Lemmas \ref{lem:N0-CFxCF} and \ref{prop:partial-disjoint-trivial} imply that 
\[
\widetilde{HF}(g',\omega')\cong \widetilde{HF}(g_1\sqcup\mathcal{O},\omega_1\sqcup\mathcal{O})\otimes \widetilde{HF}(g_1\sqcup\mathcal{O},\omega_1\sqcup\mathcal{O}) \otimes W(0).
\]
Proposition \ref{prop:tilde-hat} and Lemma \ref{lem:adding-unknot} give 
\[
\widetilde{HF}(g_i\sqcup\mathcal{O},\omega_i\sqcup\mathcal{O})\cong \widetilde{HF}(g_1,\omega_1)\otimes W(0) \otimes W(1),
\]
for $i=1,2$.
Combining these equations, we have
\[
\widetilde{HF}(g_\sqcup,\omega_\sqcup)\otimes W(0)^{\otimes 2} \otimes W(1)^{\otimes 2} \cong \widetilde{HF}(g_1,\omega_1)\otimes \widetilde{HF}(g_1,\omega_1) \otimes W(0)^{\otimes 3}\otimes W(1)^{\otimes 2}.
\]
and hence we obtain
\[
\widetilde{HF}(g_\sqcup,\omega_\sqcup) \cong \widetilde{HF}(g_1,\omega_1)\otimes \widetilde{HF}(g_1,\omega_1) \otimes W(0).
\]
Finally, Proposition \ref{prop:tilde-hat} gives
\[
\widehat{HF}(g_\sqcup,\omega_\sqcup) \cong \widehat{HF}(g_1,\omega_1)\otimes \widehat{HF}(g_1,\omega_1) \otimes W(0).
\]
\end{proof}

\section{An application and examples}
Let $G$ be an abstract graph.
$G$ is \textbf{planar} if there is an embedding of $G$ into $\mathbb{R}^2$.
For a planar graph $G$, a spatial graph $f(G)$ is \textbf{trivial} if $f(G)$ is ambient isotopic to an embedding of $G$ into $\mathbb{R}^2\subset\mathbb{R}^3$.
It is known that a trivial spatial embedding of a planar graph is unique up to ambient isotopy in $\mathbb{R}^3$ \cite{Homeomorphic-continuous-curves-in-2-space-are-isotopic-in-3-space}.

The grid homology gives obstructions to the trivial spatial handcuff graph.
\begin{cor}
\label{cor:nontrivial}
Let $f\colon G\to S^3$ be a spatial embedding of a handcuff graph.
Take any balanced coloring $\omega$ for $f$.
If $\widehat{HF}(f,\omega)$ is nontrivial, then $f(G)$ is nontrivial.
\end{cor}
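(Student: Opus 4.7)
The plan is to prove the contrapositive: assuming $f(G)$ is trivial, I will show that $\widehat{HF}(f,\omega)=0$ for every balanced coloring $\omega$, and then Theorem~\ref{thm:cutedge}~(2) will do all the work.

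First, since the handcuff graph $G$ is planar, the uniqueness result \cite{Homeomorphic-continuous-curves-in-2-space-are-isotopic-in-3-space} implies that any trivial spatial embedding $f(G)$ is ambient isotopic in $S^3$ to the standard planar embedding $f_0(G)$, which consists of two disjoint round circles in a plane $P \subset \mathbb{R}^2 \subset \mathbb{R}^3$ joined by a straight arc (the bridge). Second, I exhibit a cutting sphere for the bridge of $f_0(G)$: choose an affine plane $P' \subset \mathbb{R}^3$ that separates the two loops, arranged so that $P'$ meets the bridge transversely in a single point and is disjoint from the rest of $f_0(G)$. Compactifying, $\Sigma := P' \cup \{\infty\} \subset S^3$ is an embedded $2$-sphere satisfying the conditions of Definition~\ref{dfn:cutedge}, so the bridge is a cut edge of $f_0(G)$ as a spatial graph. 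Since the property of admitting a cutting sphere for a particular edge is preserved by ambient isotopy of $S^3$, the spatial graph $f(G)$ likewise has a cut edge.

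Third, by Theorem~\ref{thm:cutedge}~(2), any MOY graph whose underlying spatial graph has a cut edge satisfies $\widehat{HF}(f,\omega)=0$. This applies to $(f,\omega)$ for every balanced coloring $\omega$, giving the contrapositive of the claim.

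I do not anticipate a real obstacle: all nontrivial work is already packaged in Theorem~\ref{thm:cutedge}~(2), and the only substantive step is constructing the cutting sphere, which is immediate once the trivial embedding is normalized to the standard planar one. The only point to handle carefully is to note that the argument does not depend on how the transverse structure or coloring $\omega$ is arranged on $G$, since Theorem~\ref{thm:cutedge}~(2) is stated for \emph{any} MOY structure on a spatial graph with a cut edge.
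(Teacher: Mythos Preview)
Your proof is correct and follows essentially the same approach as the paper: show the contrapositive by observing that the trivial handcuff embedding has a cut edge (the bridge) and then invoke Theorem~\ref{thm:cutedge}~(2). The paper's version is terser, simply asserting that the trivial embedding has a cut edge, whereas you spell out the cutting sphere explicitly and note that the cut-edge property is an ambient isotopy invariant; this extra detail is fine but not required.
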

\begin{proof}
A handcuff graph is clearly planar.
The trivial embedding of it has a cut edge.
Then Theorem \ref{thm:cutedge} (2) completes the proof.
\end{proof}
\begin{rem}
\begin{itemize}
    \item This corollary holds for any planar graph with a cut edge.
    \item The converse of this corollary does not hold.
For example, the grid homology of the spatial graph of the center of Figure \ref{fig:3handcuff} is also trivial.
\end{itemize}
\end{rem}

\begin{figure}
\includegraphics[scale=0.45]{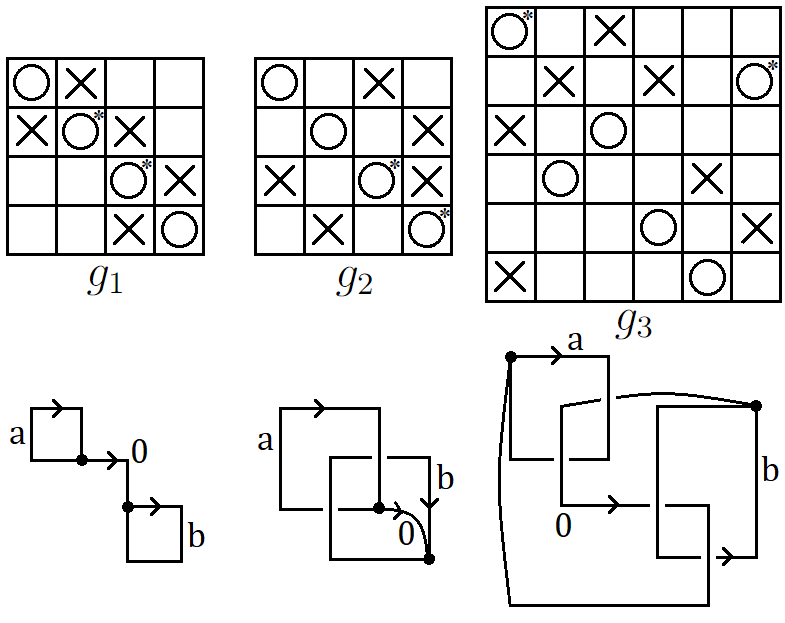}
\caption{Three graph grid diagrams for spatial handcuff graphs.}
\label{fig:3grid-handcuff}
\end{figure}

Here are some computations of grid homology for spatial handcuff graphs. 
Let $g_1$, $g_2$, and $g_3$ be three graph grid diagrams as in Figure \ref{fig:3grid-handcuff}.
Suppose that their balanced colorings, denoted by $\omega_i$ $(i=1,2,3)$, take $a$ and $b$ for two loops and zero for the edge connecting two different vertices.
Direct computations show that
\begin{align*}
\widehat{HF}(g_1,\omega_1)=&0,\\
\widehat{HF}(g_2,\omega_2)=&\mathbb{F}_{(0,a+b)}\oplus\mathbb{F}_{(-1,a)}\oplus\mathbb{F}_{(-1,b)}\oplus\mathbb{F}_{(-2,0)},\\
\widehat{HF}(g_3,\omega_1)=&\mathbb{F}_{(1,a+b)}\oplus\mathbb{F}_{(0,a+b)}\oplus\mathbb{F}_{(0,a)}\oplus\mathbb{F}_{(0,b)}\\
&\oplus\mathbb{F}_{(-1,a)}\oplus\mathbb{F}_{(-1,b)}\oplus\mathbb{F}_{(-1,0)}\oplus\mathbb{F}_{(-2,0)},
\end{align*}
where relative Alexander grading is shifted for simplicity.
Using Corollary \ref{cor:nontrivial}, we see that spatial graphs represented by $g_2$ and $g_3$ are nontrivial.
We remark that when $a=b=1$, $\widehat{HF}(g_2,\omega_2)$ coincides with the grid homology of the positive Hopf link.

\section{Acknowledgement}
I would like to express my sincere gratitude to my supervisor, Tetsuya Ito, for helpful discussions and corrections.
This work was supported by JST, the establishment of university fellowships towards
the creation of science technology innovation, Grant Number JPMJFS2123.

\bibliography{grid}
\bibliographystyle{amsplain} 

\end{document}